\numberwithin{equation}{section}
\newtheorem{theorem}{Theorem}[section]
\newtheorem{remark}[theorem]{Remark}
\newtheorem{lemma}[theorem]{Lemma}
\newtheorem{cor}[theorem]{Corollary}
\newtheorem{definition}[theorem]{Definition}
\newcommand{\nat}{\mathbb{N}}
\newcommand{\rzecz}{\mathbb{R}}
\newcommand{\eps}{\varepsilon }
\newcommand{\rd}{{\rzecz }^{d}}
\newcommand{\tr}{\text{\rm Tr}}
\newcommand{\curl}{\text{\rm curl}\, }
\newcommand{\diver}{\text{\rm div}}
\newcommand{\supp}{\text{\rm supp}}
\newcommand{\acal}{\mathcal{A}}
\newcommand{\bcal}{\mathcal{B}}
\newcommand{\ccal}{\mathcal{C}}
\newcommand{\dcal}{\mathcal{D}}
\newcommand{\fcal}{\mathcal{F}}
\newcommand{\kcal}{\mathcal{K}}
\newcommand{\lcal}{\mathcal{L}}
\newcommand{\mcal}{\mathcal{M}}
\newcommand{\ocal}{\mathcal{O}}
\newcommand{\pcal}{\mathcal{P}}
\newcommand{\rcal}{\mathcal{R}}
\newcommand{\scal}{\mathcal{S}}
\newcommand{\tcal}{\mathcal{T}}
\newcommand{\vcal}{\mathcal{V}}
\newcommand{\xcal}{\mathcal{X}}
\newcommand{\ycal}{\mathcal{Y}}
\newcommand{\zcal}{\mathcal{Z}}
\newcommand{\cmath}{\mathbb{C}}
\newcommand{\dmath}{\mathbb{D}}
\newcommand{\fmath}{\mathbb{F}}
\newcommand{\hmath}{\mathbb{H}}
\newcommand{\umath}{\mathbb{U}}
\newcommand{\vmath}{\mathbb{V}}
\newcommand{\xmath}{\mathbb{X}}
\newcommand{\nlim}{\lim_{n \to \infty }}
\newcommand{\kinf}{k \to \infty }
\newcommand{\ball}{\mathbb{B}}
\newcommand{\norm}[3]{{\|  #1 \| }_{#2}^{#3}}
\newcommand{\Norm}[3]{{\Bigl\|  #1 \Bigr\| }_{#2}^{#3}}
\newcommand{\ilsk}[3]{{( #1 , #2 ) }_{#3}}
\newcommand{\dual}[3]{{\langle #1 , #2 \rangle }_{#3}}
\newcommand{\Dual}[3]{{\Bigl< #1 , #2 \Bigr>}_{#3}}
\newcommand{\dirilsk}[3]{{( \! ( #1 , #2 ) \! ) }_{#3}}
\newcommand{\p}{\mathbb{P}}
\newcommand{\e}{\mathbb{E}}
\newcommand{\Xn}{{X}_{n}}
\newcommand{\Pn}{{P}_{n}}
\newcommand{\un}{{u}_{n}}
\newcommand{\wn}{{w}_{n}}
\newcommand{\Jn}[1]{{J}^{}_{#1}}
\newcommand{\unk}{{u}_{{n}_{k}}}
\newcommand{\taun}{{\tau}_{n}}
\newcommand{\lhs}{{\lcal }_{HS}}
\newcommand{\Bn}{{B}_{n}}
\newcommand{\bun}[1]{{\bar{u}}_{n}({#1})}
\newcommand{\td}{\tilde{d}}
\newcommand{\rtd}{{\rzecz }^{\td}}
\newcommand{\Phin}{{\Phi }_{n}}
\begin{document}

\begin{frontmatter}
\title{Stochastic hydrodynamic-type  evolution equations
driven by L\'{e}vy noise in 3D unbounded domains
- abstract framework and applications.}

\author[EM]{El\.zbieta Motyl}

\address[EM]{Department of Mathematics and Computer Science, University of \L\'{o}d\'{z}, ul. Banacha 22,
91-238 \L \'{o}d\'{z}, Poland}
\ead{emotyl@math.uni.lodz.pl}

\begin{abstract}
\noindent
The existence of  martingale solutions of the hydrodynamic-type equations in 3D possibly unbounded domains is proved.
The construction of the solution is based on the Faedo-Galerkin approximation. To overcome the difficulty related to the lack of the compactness of Sobolev embeddings in the case of unbounded domain we use certain Fr\'{e}chet space.
 We use also  compactness and tightness criteria  in some nonmetrizable spaces and a version of the Skorokhod Theorem in non-metric spaces. The general framework is applied to the stochastic Navier-Stokes,  magneto-hydrodynamic  (MHD) and the  Boussinesq equations.
\end{abstract}

\begin{keyword}
L\'{e}vy noise \sep martingale solution \sep compactness method

MSC: primary 35Q35 \sep 35Q30  \sep secondary  60H15 \sep 76M35
\end{keyword}

\end{frontmatter}

\section{Introduction.}

\noindent
Let $\mathcal{O} \subset {\mathbb{R}}^{d} $, $d=2,3$, be an open  connected possibly unbounded subset with smooth boundary $\partial \mathcal{O} $.
Let $\mathbb{H} \subset {L}^{2}( \mathcal{O}; {\rzecz }^{\tilde{d}})) $ 
and $\mathbb{V} \subset {H}^{1}( \mathcal{O} ;{\rzecz }^{\tilde{d}} )$, where $\tilde{d} \in \mathbb{N} $, be two Hilbert spaces such that $\mathbb{V} \subset \mathbb{H}$, the embedding being continuous. 
Here ${H}^{1}( \mathcal{O} ;{\rzecz }^{\tilde{d}} )$ stands for the Sobolev space.
We consider the following stochastic equation
\begin{align} 
&  u(t)  + \int_{0}^{t} \bigl[  \acal u(s)  + \bcal (u(s)) +\rcal u(s) \bigr] \, ds 
   ={u}_{0} + \int_{0}^{t} {f}_{} (s) \, ds + \int_{0}^{t}\int_{{Y}_{0}} F(s,u({s}^{-});y) \tilde{\eta } (ds,dy) \nonumber \\
& \qquad  + \int_{0}^{t}\int_{Y\setminus {Y}_{0}} F(s,u({s}^{-});y) \eta  (ds,dy)
 + \int_{0}^{t}G (s,u(s) ) \, dW(s)  , \qquad t \in (0,T) .\label{E:Intro_equation}
\end{align}  
In this equation $\mathcal{A}, \mathcal{B},\mathcal{R}$ are maps defined in the spaces $\mathbb{H} $ or $\mathbb{V}$, satisfying appropriate conditions (A.1), (B.1)-(B.5) and (R.1), respectively, formulated in Section \ref{S:Statement}.  
Moreover, $W$ stands for a cylindrical Wiener process on a separable Hilbert space and $\eta $ is a time-homogeneous  Poisson random measure on a measurable space $(Y,\mathcal{Y} )$ with a $\sigma $-finite intensity measure $\mu $ and ${Y}_{0} \in \ycal $ is such that $\mu (Y \setminus {Y}_{0})<\infty $.
The processes $W$ and $\eta $ are assumed to be independent.
For example, if $L={(L(t))}_{t \ge 0}$ is a L\'{e}vy process in a Hilbert space $E$ and $\eta $ is the Poisson random measure 
 corresponding to the process of jumps 
${(\Delta L(t))}_{t \ge 0}$, where
\[
     \Delta L(t):= L(t)-L({t}^{-}), \qquad t \ge 0 ,
\]
then we can put ${Y}_{0}:= \{ x \in E:  \norm{x}{E}{} <1 \} $. In this case the noise terms considered in equation \eqref{E:Intro_equation} correspond to the L\'{e}vy-It\^{o} decomposition of the process $L$, see e.g. \cite{Applebaum_2009} and \cite{Peszat_Zabczyk_2007}.
We impose rather general assumptions (F.1)-(F.3) and (G.1)-(G.3) on the noise terms, see Section \ref{S:Statement}.
We prove the existence of a martingale solution  of equation \eqref{E:Intro_equation} understood as a system $(\Omega ,\fcal ,\fmath ,\p ,\eta , W,u)$, where $(\Omega ,\fcal ,\fmath ,\p)$ is a filtered probability and $u ={(u(t))}_{t \in [0,T]}$ is a stochastic process satisfying appropriate regularity properties and integral identity.
The trajectories of the process  $u$ are, in particular, $\hmath $-valued weakly \it c\`{a}dl\`{a}g \rm functions such that  
\begin{equation}
   {\e } \Bigl[ \sup_{t \in [0,T]} {|{u}(t)|}_{\hmath }^{2} 
 + \int_{0}^{T} \norm{{u}(t)}{\vmath }{2} \, ds\Bigr] <\infty . \label{E:Intro_estimates}
\end{equation} 
The construction of a solution is based on the Faedo-Galerkin method, i.e.
\begin{align*} 
   &   \un (t)  =  \Pn {u}_{0} - \int_{0}^{t}\bigl[ \Pn \acal \un (s)  + {\bcal }_{n}  \bigl(\un (s) \bigr)
  + \Pn \rcal \un (s) - \Pn f (s)  \bigr] \, ds  \nonumber   \\ 
  &+  \int_{0}^{t} \int_{{Y}_{0}} \Pn F(s,\un ({s}^{-}),y) \tilde{\eta } (ds,dy) 
  +  \int_{0}^{t} \int_{Y\setminus {Y}_{0}} \Pn F(s,\un ({s}^{-}),y) \eta  (ds,dy) \nonumber \\
 &+ \int_{0}^{t} \Pn G(s,\un (s)) \, dW(s) ,   
\, \, \,  t \in [0,T]  .   
\end{align*}
We prove that the processes ${(\un (t))}_{t \in [0,T]}$, satisfy the following  uniform  estimates
\begin{equation}
  \sup_{n \in \nat } \e \Bigl[ \sup_{t \in [0,t]} {|\un (t)|}_{\hmath }^{p} \Bigr] < \infty 
  \quad \mbox{ and }  \quad 
\sup_{n \in \nat } \e \Bigl[\int_{0}^{T} \norm{\un (t)}{\vmath }{2} \, dt \bigr]  < \infty ,
\label{E:Intro_apriori_estimates}
\end{equation}
where $p \in [1,2+\gamma ]$ and $\gamma > 0$ is a given parameter. For each $n \in \nat $, the process 
$\un $  generates a probability measure $\lcal (\un )$ on appropriate functional space. We prove that the set of laws $\{ \lcal (\un ), n \in \nat \} $ is tight in the space $\zcal $, where
\[
    \zcal := {L}^{2}_{w}(0,T; \vmath ) \cap {L}^{2}(0,T;{L}^{2}_{loc}(\ocal )) \cap
     \dmath (0,T;{\umath }^{\prime }) \cap \dmath (0,T; {\hmath }_{w}), 
\]
defined in Section \ref{S:Compactness_tightness}. 
To this end use the compactness and tightness criteria in  the space $\zcal $, see Lemma 
\ref{L:Dubinsky_cadlag_unbound} and Corollary \ref{C:tigthness_criterion_cadlag_unbound}
in Section \ref{S:Compactness_tightness}. 
They are counterparts for the present abstract settting of the corresponding criteria  proved in \cite{Motyl_NS_Levy_2012}. 
To prove the tightness of $\{ \lcal (\un ), \, n \in \nat   \} $ we use estimates 
\eqref{E:Intro_apriori_estimates} with $p=2$.
Next, we apply a version of the Skorokhod Embedding Theorem for non-metric spaces, see Appendix C, following easily from the Jakubowski's version of the Skorokhod Theorem \cite{Jakubowski_1998}
and from the version due to Brze\'{z}niak and Hausenblas \cite{Brzezniak_Hausenblas_2010}. 
At this stage we need estimates \eqref{E:Intro_apriori_estimates} with $p>2$.

\bigskip  \noindent
The abstract approach is applied to the stochastic
\begin{itemize}
\item Navier-Stokes equations, 
\item  magneto-hydrodynamic equations (MHD),
\item  Boussinesq equations  
\end{itemize}
in the domain $\ocal $. 
In applications, the present approach allows to consider the multiplicative Gaussian noise term, represented by 
$\int_{0}^{t}G(s,u(s)) \, dW(s)$, dependent  both on the state $u$ and their spatial derivatives $\frac{\partial u}{\partial {x}_{i}}$, 
$1\le i \le d $, $d=2,3$. Presence of the derivatives $\frac{\partial u}{\partial {x}_{i}}$ 
 in the noise term is important in modelling the turbulence, see \cite{Brzezniak_Capinski_Flandoli_1991} and \cite{Mikulevicius_Rozovskii_2004}.
Assumptions (G.1)-(G.3) formulated in Section \ref{S:Statement} cover the following example
\[
  G(t,u(t))\, dW(t) = \sum_{i=1}^{\infty } [({b}_{i}(x) \cdot \nabla )u(t,x) + {c}_{i}(x) u(x)] d {\beta }_{i}(t),
\] 
where ${{(\beta )}_{i}}_{i \in \nat } $ are independent real-valued standard Wiener processes, see Section 8 in \cite{Brzezniak_Motyl_NS}.
 
\bigskip \noindent
The present paper is a straightforward generalization of the results of \cite{Motyl_NS_Levy_2012}, where the stochastic Navier-Stokes equations are considered. Here, we construct an abstract framework which covers also other hydro\-dy\-na\-mic-type equations, e.g. stochastic magneto-hydrodynamic and Boussinesq equations. In comparison to \cite{Motyl_NS_Levy_2012} we consider more general L\'{e}vy noise term 
and  additionally  we prove estimates \eqref{E:Intro_estimates} on the solution of equation 
\eqref{E:Intro_equation}. Moreover, to construct a process $u$ it is sufficient to use estimates \eqref{E:Intro_apriori_estimates} with $p>2$ (instead of $p>4$).   

\bigskip  
\noindent
The theory of the stochastic Navier-Stokes equations driven by Gaussian noise  were developed in many papers, see e.g. \cite{Bensoussan_Temam_73}, \cite{Brzezniak_Capinski_Flandoli_1991},
\cite{Capinski_Gatarek_1994}, \cite{Capinski_Peszat_1997}, \cite{Flandoli_Gatarek},
\cite{Capinski_Peszat_2001}, \cite{Mikulevicius_Rozovskii_2004}, \cite{Mikulevicius_Rozovskii},
\cite{Rockner+Zhang_2009}, \cite{Rockner+Zhang_2010} and 
\cite{Brzezniak_Motyl_NS}. The noise term of Poissonian type is considered in the papers
\cite{Dong_Xie_2009}, \cite{Dong_Xie_2011},
\cite{Dong_Zhai_2011} and  \cite{Brzezniak+Hausenblas+Zhu_2013},
and more general L\'{e}vy noise in \cite{Motyl_NS_Levy_2012}  and \cite{Sakthivel+Sritharan_2012}.  
We consider these equations  because of their importance in other hydrodynamic models, e.g. magneto-hydrodynamic equations and Boussinesq equations. 

\bigskip  \noindent
The stochastic magneto-hydrodynamic equations driven by Gaussian noise  
 in 2D domains were considered by
Barbu and Da Prato \cite{Barbu_DaPrato_2007} for additive noise term and Chueshov and Millet \cite{Chueshov_Millet_2009} for multiplicative noise term. 
In the papers by Sritharan and Sundar \cite {Sritharan+Sundar_1999}, and
 by Sango \cite{Sango_2010}  the analysis of the existence of solutions in 2D and 3D bounded domains 
is provided.
In \cite{Sango_2010} the noise term depends both on the velocity $u$ and the magnetic field $\mathfrak{b}$ but does not depend on their spatial derivatives. This follows from assumptions (24) and (25) in  \cite{Sango_2010}.
Here we will generalize these results to the case of unbounded domain when the Gaussian noise term depends on $u, \mathfrak{b}$ and their derivatives $\frac{\partial u}{\partial {x}_{i}}$, 
$\frac{\partial \mathfrak{b}}{\partial {x}_{i}}$, $1\le i \le d $, of the velocity and the magnetic field.
Moreover, we add also  Poissonian type noise term of the form given in equation \eqref{E:Intro_equation}. 

\bigskip \noindent 
The Boussinesq equations has been studied by  Foia\c{s},  Manley and  Temam \cite{Foias_Manley_Temam_87} 
and  Ghidaglia \cite{Ghidaglia_86} in the deterministic case. 
The stochastic Boussinesq equations driven by Gaussian noise is considered by  Duan and  Millet \cite{Duan_Millet_2009}, Ferrario \cite{Ferrario_97} in 2D domains of the form $\rzecz \times [0,1]$.
Martingale solutions in 2D and 3D domains of the form ${\rzecz }^{d-1} \times [0,1]$ with periodic boundary conditions in the directions ${x}_{i}$, $1\le i \le d-1$ were considered in \cite{Brzezniak_Motyl_2010}. 
In the present paper, we generalize the results to the cases of unbounded  domain  $\ocal $. Moreover, we consider  a general L\'{e}vy  noise.

\bigskip  \noindent
The present paper is split into two main parts. 
The first one, consisting of Sections 2-5, concerns the abstract framework.
In Section \ref{S:Statement} we formulate the problem and the general assumptions.
The compactness and tightness criterion  are contained in Section \ref{S:Compactness_tightness}. 
Section \ref{S:Existence} contains the proof of the main theorem on the existence of a martingale solutions. 
The second part ( Section \ref{S:Applications}) is devoted to applications.
Some auxilliary results are given in Appendices .


\pagebreak

\section{Statement of the problem}  \label{S:Statement}

\noindent
Let $\ocal \subset \rd $ be an open  connected possibly unbounded subset with smooth boundary $\partial \ocal $, where $d=2,3$.
Let $(\hmath ,\ilsk{\cdot }{\cdot }{\hmath })$ and $(\vmath ,\ilsk{\cdot }{\cdot }{\vmath })$ be two Hilbert spaces such that 
\[
  \hmath \subset {L}^{2}(\ocal ; \rtd)   \quad \mbox{and}  \quad
  \vmath \subset {H}^{1}(\ocal ;\rtd ) ,
\]
where $\td $ is a positive integer, and 
 the norms in $\hmath $ and $\vmath $ induced by the inner products, denoted by 
$|\cdot {|}_{\hmath }$ and $\norm{\cdot }{\vmath }{}$, are equivalent to the  norms inherited from the spaces ${L}^{2}(\ocal ; \rtd)$ and ${H}^{1}(\ocal ; \rtd)$, respectively.
We assume that $\vmath \hookrightarrow \hmath $ the embedding being dense and continuous.
Moreover, we assume that the inner product in the space $\vmath $ is of the following form
\begin{equation}  \label{E:V_il_sk}
      \ilsk{u}{v}{\vmath }  = \ilsk{u}{v}{\hmath } + \dirilsk{u}{v}{} , \qquad u,v \in \vmath
\end{equation}  
Then the norm  in $\vmath $ is of the form
\begin{equation}  \label{E:norm_V}
   \norm{u}{\vmath }{2} = {|u|}_{\hmath }^{2} + \norm{u}{}{2} , \qquad u \in \vmath ,
\end{equation} 
where $\norm{u}{}{2} = \dirilsk{u}{u}{}$. 
Identifying $\hmath$ with its dual ${\hmath }^{\prime }$, we have the following continuous embeddings
\begin{equation*}
   \vmath \hookrightarrow \hmath \cong {\hmath }^{\prime } \hookrightarrow {\vmath }^{\prime } .
\end{equation*}

\bigskip  \noindent
The dual pairing between a Hilbert space $X$ and its dual space ${X}^{\prime }$ will be denoted by $\dual{\cdot }{\cdot }{{X}^{\prime },X}$. If no confusion seems likely we omit the subscripts ${X}^{\prime }$, $X$ and write $\dual{\cdot }{\cdot }{}$.

\bigskip  \noindent
Let $\bigl( {\ocal }_{R} {\bigr) }_{R \in \nat } $ be a sequence of open and bounded subsets of $\ocal $ with
regular boundaries $\partial {\ocal }_{R}$  such that
\[
{\ocal }_{R} \subset {\ocal }_{R+1} 
\quad \text{ and  } \quad \bigcup_{R=1}^{\infty } {\ocal }_{R} = \ocal .
\]
We will use the  space
${L}^{2}(0,T;{L}^{2}_{loc}(\ocal ))$  of measurable functions $ u:[0,T] \times \ocal  \to \rtd  $
 such that for all $ R \in \nat $
\begin{equation*}
  {p}_{T,R}^{}(u)
 := \Bigl(  \int_{0}^{T} \int_{{\ocal }_{R}}  |u(t,x){|}^{2} dxdt \! {\Bigr) }^{\frac{1}{2}} <\infty  ,  \label{E:seminorms_q=2}
\end{equation*}
with the Fr\'{e}chet topology  generated by the sequence of seminorms
    $ ({p}_{T,R}{)}_{R\in \nat } $.

\bigskip  \noindent
\bf Assumptions. \rm 
We assume that  $\acal ,\bcal $ and $\rcal $ are maps satisfying the following conditions.
\begin{description}
\item[(A.1)] $\acal : \vmath \to \vmath ' $ is a linear map such that 
\begin{equation} \label{E:A_acal_rel}
  \dual{\acal u}{v}{} = \dirilsk{u}{v}{} , \qquad u,v \in \vmath .
\end{equation}
\item[(B.1)] $\bcal : \vmath \times \vmath \to \vmath '$ is a bilinear  map and
there exists a constant ${c}_{1}>0 $ such that
\begin{align}
  | \bcal (u,v) {|}_{{\vmath }^{\prime }} \le {c}_{1} \norm{u}{\vmath }{} \norm{v}{\vmath }{} , 
   \qquad u,v \in \vmath .  \label{E:estimate_B}
\end{align}
\item[(B.2)] $\bcal $ satisfies the following condition
\begin{equation}  \label{E:antisymmetry_B}
      \dual{\bcal (u,v)}{w}{} = - \dual{\bcal (u,w)}{v}{} , \qquad u,v,w \in \vmath .
\end{equation}
We will also use the following notation $\bcal (u) := \bcal (u,u)$.
\item[(B.3)] $\bcal : \vmath \to {\vmath }^{\prime }$ is locally Lipschitz continuous, 
i.e. for every $r>0$ there exists a constant ${L}_{r}$
such that 
\begin{equation*}
   {| \bcal (u) - \bcal (\tilde{u}) | }_{{\vmath }^{\prime }} \le {L}_{r} \norm{u- \tilde{u}}{\vmath}{} ,
   \qquad u , \tilde{u} \in \vmath , \quad \norm{u}{\vmath}{}, \norm{\tilde{u}}{\vmath}{} \le r  .
\end{equation*}
\item[(B.4)] There exist a separable Hilbert space ${\vmath }_{\ast } \subset \vmath $, the embedding being dense and continuous,   such that
$\bcal $ can be extended to a bilinear   map from $\hmath \times \hmath $ 
into ${\vmath }_{\ast }^{\prime }$.
Moreover, there exists a constant ${c}_{2}>0 $ such that
\begin{align}
  | \bcal (u,w) {|}_{{\vmath }_{\ast }^{\prime }} \le {c}_{2} {|u|}_{\hmath}^{} {|w|}_{\hmath}^{}, \qquad u,w \in \hmath .  \label{E:estimate_B_ext}
\end{align}
\item[(B.5)] 
For all $\varphi  \in {\vmath }_{\ast } $ the map ${\tilde{B}}_{\varphi }$ defined by 
\begin{equation} \label{E:B**}
     \bigl( {\tilde{\bcal }}_{\varphi }(u)\bigr) (t):= \dual{\bcal (u(t)}{\varphi }{ }, 
     \quad u \in {L}^{2}(0,T;\hmath ), \quad  t \in [0,T] 
\end{equation}
restricted to bounded subsets of ${L}^{2}(0,T;\hmath ) $ is a continuous map  into 
$ {L}^{1}([0,T]; \rzecz ) $
 if in the space ${L}^{2}(0,T;\hmath )$ we consider the  topology inherited from the space 
${L}^{2}(0,T;{L}^{2}_{loc}(\ocal )) $.
\item[(R.1)] $\rcal  : \hmath \to \vmath '$ is linear and continuous and there exists a constant 
${c}_{3}>0$ such that 
\begin{equation*}
    - \dual{\rcal u}{u}{} \le {c}_{3} |u{|}_{\hmath }^{2}, \qquad u \in \vmath .
\end{equation*}
\end{description}

\bigskip
\begin{remark}  \label{R:B.5_comment}
Condition (B.5) is equivalent to the following one 
\begin{itemize}
\item  if  $({u}_{n}) $ is  a   sequence bounded in ${L}^{2}(0,T;\hmath )$  and ${u}_{n} \to u $ in ${L}^{2}(0,T;{L}^{2}_{loc}(\ocal ))$, then for all $\varphi  \in {\vmath }_{\ast }$:
\begin{equation}
   \nlim \int_{0}^{T}  \dual{ \bcal (\un (s))-\bcal (u(s))}{\varphi }{}   \, ds =0.  
\end{equation}
\end{itemize}
\end{remark}

\bigskip \noindent
Moreover, we impose the following conditions on the random forces, the deterministic force $f$ and the initial state ${u}_{0}$. 
We assume that
\begin{description}
\item[(C.1)] ${u}_{0} \in \hmath $, $f \in {L}^{2}([0,T];{\vmath }^{\prime })$ and 
$\mathfrak{A}:=(\Omega , \fcal , \fmath , \p )$ is a filtered probability space with a filtration
 $\fmath ={({\fcal }_{t})}_{t \ge 0}$ satisfying usual hypotheses.
\item[(F.1)] Let $(Y, \ycal )$ be a measurable space and let $\mu $ be a $\sigma $-finite measure on $(Y, \ycal )$. Let ${Y}_{0} \in \ycal $ be such that $\mu (Y\setminus {Y}_{0}) < \infty $.
Assume that $\eta $ is a  time homogeneous Poisson random measure on  $(Y, \ycal )$  over
$\mathfrak{A}$ with the (jump) intensity measure $\mu $,
\item[(F.2)] $F:[0,T]\times \hmath \times Y \to \hmath $ is  a measurable function 
and there exists a constant $L$ such that
\begin{equation}
    \int_{Y} |F(t,{u}_{1};y)- F(t,{u}_{2};y) {|}_{\hmath }^{2}  \mu (dy) \le  L |{u}_{1}-{u}_{2}{|}_{\hmath }^{2} 
   , \quad {u}_{1},  {u}_{2} \in \hmath  , \, \,  t \in [0,T] \label{E:F_Lipschitz_cond} ,
\end{equation}
and for each $p \in \{ 1,2,2+\gamma , 4, 4+2\gamma \} $ there exists a constant ${C}_{p}$ such that
\begin{equation}   
    \int_{Y} |F(t,u;y) {|}_{\hmath }^{p} \, \mu (dy) \le  {C}_{p} (1 + |u{|}_{\hmath }^{p}), \qquad  u \in \hmath  , \quad t \in [0,T],  \label{E:F_linear_growth}
\end{equation}
where $\gamma >0$ is some positive constant.
\item[(F.3)] Moreover, for all $\varphi  \in \hmath $ the mapping ${\tilde{F}}_{\varphi }$ defined by 
\begin{equation} \label{E:F**}
     \bigl( {\tilde{F}}_{\varphi }(u)\bigr) (t,y):= \ilsk{F(t,u({t}^{-});y)}{\varphi }{\hmath }, 
     \quad u \in {L}^{2}(0,T;\hmath ), \quad (t,y) \in [0,T] \times Y 
\end{equation}
is a continuous from ${L}^{2}(0,T;\hmath ) $ into $ {L}^{2}([0,T]\times Y, dl\otimes \mu ; \rzecz ) $
 if in the space ${L}^{2}(0,T;\hmath )$ we consider the  topology inherited from the space 
$ {L}^{2}(0,T;{L}^{2}_{loc}(\ocal )) $.
\footnote{Here $l$ denotes the Lebesgue measure on the interval $[0,T]$.} 
\item[(G.1)] $ W(t)$  is a cylindrical  Wiener process in a separable Hilbert space ${Y}_{W}$ defined on the stochastic basis $\mathfrak{A}$. The process $W$ is independent of $\eta $. 
\item[(G.2)]  $G: [0,T] \times \vmath  \to \lhs ({Y}_{W},\hmath ) $ and there exists a constant ${L}_{G}>0$
 such that 
\begin{equation}
   \norm{G(t,{u}_{1}) - G(t,{u}_{2})}{\lhs ({Y}_{W},\hmath )}{2} \le {L}_{G} \norm{{u}_{1}-{u}_{2}}{\vmath }{2} ,
   \quad {u}_{1}, {u}_{2} \in \vmath  , \, \,  t \in [0,T] .
\end{equation} 
Moreover there exist ${\lambda }_{}$, $\kappa \in \rzecz $ and $a\in \bigl( 2-\frac{2}{3+\gamma },2\bigr]$  such that
\begin{equation} \label{E:G}
     2 \dual{\acal u }{u }{} -  \norm{G(t,u )}{\lhs ({Y}_{W},\hmath )}{2}
     \ge  a \norm{u}{}{2} -{\lambda }_{} {|u |}_{\hmath }^{2} - \kappa  , \quad u \in \vmath  , \, \, t \in [0,T].
\end{equation}
\item[(G.3)]
Moreover, $G $ extends to a continuous mapping $G :[0,T] \times \hmath \to \lhs  ( {Y}_{W},  {{\vmath }^{\prime }}) $  such that
\begin{equation} \label{E:G*}
   \norm{G(t,u)}{\lhs ({Y}_{W}, {\vmath }^{\prime })}{2} \le C (1 + {|u|}_{\hmath }^{2}) , \qquad u \in \hmath  . 
\end{equation}
for some $C>0$. Moreover, for every $\varphi  \in \vmath $ the map ${\tilde{G}}_{\varphi }$ defined by
\begin{equation} \label{E:G**}
  \bigl( {\tilde{G}}_{\varphi }(u)\bigr)  (t) := \dual{G(t,u(t))}{\varphi }{} ,
 \qquad u \in {L}^{2}(0,T;\hmath ) , \quad t \in [0,T]
\end{equation}
is a continuous mapping from ${L}^{2}(0,T;\hmath ) $ into $ {L}^{2}([0,T];\lhs ({Y}_{W},\rzecz ) ) $
 if in the space  ${L}^{2}(0,T;\hmath )$ we consider the  topology inherited from the space ${L}^{2}(0,T;{L}^{2}_{loc}(\ocal ))$.
\end{description}
For any Hilbert space $E$ the symbol $\lhs ({Y}_{W};E)$ denotes the space of Hilbert-Schmidt operators from ${Y}_{W}$ into  $E$. 


\bigskip  \noindent
Let us consider the following stochastic equation
\begin{align} 
&  u(t)  + \int_{0}^{t} \bigl[  \acal u(s)  + \bcal (u(s)) +\rcal u(s) \bigr] \, ds 
   ={u}_{0} + \int_{0}^{t} {f}_{} (s) \, ds + \int_{0}^{t}\int_{{Y}_{0}} F(s,u({s}^{-});y) \tilde{\eta } (ds,dy) \nonumber \\
& \qquad  + \int_{0}^{t}\int_{Y\setminus {Y}_{0}} F(s,u({s}^{-});y) \eta  (ds,dy)
 + \int_{0}^{t}G (s,u(s) ) \, dW(s)  , \qquad t \in (0,T) .\label{E:equation}
\end{align}  

\bigskip
\begin{definition}  \rm  \label{D:solution}
 \bf A martingale solution \rm of  equation \eqref{E:equation}
is a system 
\noindent
$\bigl( \bar{\mathfrak{A}}, \bar{\eta }, \bar{W} ,\bar{u}\bigr) $,
where
\begin{itemize}
\item[$\bullet $]  $\bar{\mathfrak{A}}:= \bigl( \bar{\Omega }, \bar{\fcal },  \bar{\fmath } ,\bar{\p }  \bigr) $ is a filtered probability space with a filtration $\bar{\fmath } = \{ {\bar{\fcal }_{t}}{\} }_{t \ge 0} $, 
\item[$\bullet $] $\bar{\eta }$  is a time homogeneous Poisson random measure on $(Y, \ycal )$ over
$\bar{\mathfrak{A}} $ with the intensity measure $\mu $,
\item[$\bullet $] $\bar{W}$ is a cylindrical Wiener process on the space ${Y}_{W}$ over
$\bar{\mathfrak{A}} $,
\item[$\bullet $] $\bar{u}: [0,T] \times \Omega \to \hmath $ is a predictable process with 
$\bar{\p } $-a.e. paths
\[
  \bar{u}(\cdot , \omega ) \in \dmath \bigl( [0,T], {\hmath }_{w} \bigr)
   \cap {L}^{2}(0,T;\vmath )
\]
such that for all $ t \in [0,T] $ and all $\varphi \in \vmath $ the following identity holds $\bar{\p }$ - a.s.
\begin{align*}
 &\ilsk{\bar{u}(t)}{\varphi}{\hmath }  + \int_{0}^{t} \dual{\acal \bar{u}(s)}{\varphi}{}  ds
+ \int_{0}^{t} \dual{\bcal (\bar{u}(s))}{\varphi}{}  ds 
+ \int_{0}^{t} \dual{\rcal \bar{u}(s)}{\varphi}{}  ds
   \nonumber \\
 & =\ilsk{{u}_{0}}{\varphi}{\hmath } 
 + \int_{0}^{t} \dual{f(s)}{\varphi}{} ds
  + \int_{0}^{t} \int_{{Y}_{0}} \ilsk{F(s,\bar{u}(s);y)}{\varphi}{\hmath } \,  \tilde{\bar{\eta }} (ds,dy) \\
 & + \int_{0}^{t} \int_{Y\setminus {Y}_{0}} \ilsk{F(s,\bar{u}(s);y)}{\varphi}{\hmath } 
 \, \bar{\eta } (ds,dy) 
  + \Dual{\int_{0}^{t}G(s,\bar{u}(s))\,  d\bar{W}(s)}{\varphi}{} . 
\end{align*}
\end{itemize}
\end{definition}

\bigskip \noindent
The symbol ${\hmath }_{w}$ denotes the Hilbert space $\hmath $ endowed with the weak topology
and $\dmath ([0,T];{\hmath }_{w}) $ is the space of weakly \it c\`{a}dl\`{a}g \rm functions
$ u : [0,T] \to \hmath  $.
Recall that $u:[0,T]\to \hmath $ is  weakly \it c\`{a}dl\`{a}g \rm  iff for every $h \in \hmath $ the real-valued function
$
     [0,T] \ni t \mapsto \ilsk{u(t)}{h}{\hmath }
$
is \it c\`{a}dl\`{a}g. \rm 
 
\bigskip  \noindent
The main result of the present paper is expressed in the following theorem.

\bigskip 
\begin{theorem} \label{T:existence}  
Let assumptions (A.1), (B.1)-(B.5), (R.1), (C.1), (F.1)-(F.3) and (G.1)-(G.3) be satisfied.
Then there exists a martingale solution $\bigl( \bar{\mathfrak{A} }, \bar{\eta }, \bar{W} ,\bar{u}\bigr) $ 
of  problem \eqref{E:equation} such that
\begin{equation}  \label{E:u_estimates}
   \bar{\e } \Bigl[ \sup_{t \in [0,T]} {|\bar{u}(t)|}_{\hmath }^{2} 
 + \int_{0}^{T} \norm{\bar{u}(t)}{\vmath }{2} \, ds\Bigr] <\infty .
\end{equation} 
\end{theorem}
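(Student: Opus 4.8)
The plan is to construct the martingale solution through the Faedo--Galerkin scheme announced in the introduction, following the compactness/tightness--Skorokhod route. First I would fix an orthonormal basis of $\hmath$ consisting of elements of $\vmath$ (and, where possible, of $\vmath_\ast$), let $\hmath_n$ be the span of the first $n$ basis vectors, and let $\Pn$ be the $\hmath$-orthogonal projection onto $\hmath_n$. The Galerkin system written in the introduction is then a finite-dimensional stochastic equation with jumps whose drift coefficients are locally Lipschitz (by (B.3) and the linearity and continuity of $\acal,\rcal$) and whose noise coefficients are globally Lipschitz and of linear growth (by (F.2) and (G.2)); standard existence and uniqueness theory for stochastic equations driven by a Wiener process and a Poisson random measure yields a unique $\hmath_n$-valued c\`adl\`ag solution $\un$ on $[0,T]$.

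The second step is the a priori bounds \eqref{E:Intro_apriori_estimates}. I would apply the It\^o formula for the jump-diffusion $\un$ to the functional $x \mapsto |x|_{\hmath}^{2}$. The quadratic term $\langle \bcal(\un),\un\rangle$ drops out by the antisymmetry (B.2); the term $2\langle \acal \un,\un\rangle$ together with the Gaussian correction combines, via the coercivity inequality (G.2), into the favourable lower bound $a\norm{\un}{}{2} - \lambda|\un|_{\hmath}^{2} - \kappa$; and $-\langle \rcal\un,\un\rangle$ is controlled by (R.1). After taking the supremum in time, estimating the Wiener and compensated Poisson integrals by the Burkholder--Davis--Gundy inequality together with the growth bounds (F.2) on the jumps, and applying Gronwall's lemma, I obtain the bound for $p=2$. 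The higher moments $p \in (2,2+\gamma]$ follow by applying It\^o to $x\mapsto|x|_{\hmath}^{p}$ and using (F.2) for the exponents $p$ and $2p$; this is precisely where the extended range $\{1,2,2+\gamma,4,4+2\gamma\}$ in (F.2) is used.

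Third, these bounds feed the tightness argument. With $p=2$ the laws $\lcal(\un)$ are shown to be tight on $\zcal$ by verifying the hypotheses of Corollary \ref{C:tigthness_criterion_cadlag_unbound}: the $\e\int_0^T\norm{\un(t)}{\vmath}{2}\,dt$ bound controls the weak-$\vmath$ and local-$L^2$ components, while an Aldous-type estimate on the increments of $\un$ in $\umath'$---obtained by bounding each term of the equation in $\vmath'$ (or in $\vmath_\ast'$ via (B.4)) and invoking the moment bounds---controls the c\`adl\`ag components. I would then invoke the non-metric Skorokhod theorem (Appendix C) to obtain a new stochastic basis $\bar{\mathfrak A}$, random elements $\bar u_n\to\bar u$ $\bar\p$-a.s.\ in $\zcal$, together with a Poisson random measure $\bar\eta$ and a Wiener process $\bar W$, such that each $\bar u_n$ has the same law as $\un$ and solves the corresponding Galerkin equation with the new noise. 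The stronger moments with $p>2$ supply the uniform integrability needed to preserve the a priori bounds on the copies and to pass expectations through the limit.

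The final and hardest step is the passage to the limit and the identification of $\bar u$ as a solution. The difficulty, caused by the unbounded domain, is that on $\zcal$ one has strong convergence only in the local topology ${L}^{2}(0,T;{L}^{2}_{loc}(\ocal))$, not in ${L}^{2}(0,T;\hmath)$. This is exactly why (B.5), (F.3), (G.3) are imposed: they assert continuity of the nonlinear map $\tB_\varphi$, the jump map $\tilde F_\varphi$ and the Gaussian map $\tilde G_\varphi$ with respect to the local topology. Testing the Galerkin identity against a fixed $\varphi\in\vmath_\ast$, I would pass to the limit in each term---the deterministic terms via the weak and local convergences, the nonlinear term via (B.5) in the form of Remark \ref{R:B.5_comment}, and the two stochastic integrals via (F.3), (G.3) together with the a.s.\ convergence of $\bar u_n$ and the martingale-convergence lemmas for stochastic integrals. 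Controlling these stochastic integrals is the main obstacle, handled by representing them as martingales and using the continuity in the local topology to identify the limiting integrands. This shows that $(\bar{\mathfrak A},\bar\eta,\bar W,\bar u)$ satisfies the weak formulation of Definition \ref{D:solution} for $\varphi$ in a dense set, hence for all $\varphi\in\vmath$. Lastly, estimate \eqref{E:u_estimates} is inherited from \eqref{E:Intro_apriori_estimates}: by the weak lower semicontinuity of the $\hmath$- and $\vmath$-norms and Fatou's lemma along the a.s.-convergent subsequence, the uniform bound with $p=2$ survives in the limit.
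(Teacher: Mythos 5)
Your proposal is correct and follows essentially the same route as the paper's own proof: Faedo--Galerkin approximation, It\^{o} formula plus Burkholder--Davis--Gundy and Gronwall for the uniform estimates, tightness on $\zcal$ via the Aldous condition and Corollary \ref{C:tigthness_criterion_cadlag_unbound}, the non-metric Skorokhod theorem, term-by-term passage to the limit using (B.5), (F.3), (G.3) with the $p>2$ moments supplying the uniform integrability (Vitali), and a density plus lower-semicontinuity argument for the final identity and estimate \eqref{E:u_estimates}. The only cosmetic differences are that the paper tests with $\varphi \in \umath$ (rather than $\varphi \in {\vmath }_{\ast }$) so that Lemma \ref{L:P_n|U} applies, and that it transfers the Galerkin identity to the new probability space through the functional ${\Lambda }_{n}$ and equality of laws rather than asserting directly that ${\bar{u}}_{n}$ solves the equation.
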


\bigskip  \noindent
Assumption (F.3) and second part of assumption (G.3) are important in the case of unbounded domain $\ocal $. If $\ocal $ is bounded, they can be omitted.
Assumptions (G.2)-(G.3) allow to consider the Gaussian noise term  dependent both on $u$ and ${\partial}_{{x}_{i}}u$, $i=1,...,d$.
This corresponds to inequality \eqref{E:G} with  $a<2$. In the case when $a=2$  the noise term $G$ depends on $u$ but not on its spatial derivatives.

\bigskip  \noindent
The proof of Theorem \ref{T:existence} is based on the Faedo-Galerkin method. 
To this end we need appropriate orthonormal basis in the space $\hmath $. In the next section we recall a general approach used also in \cite{Brzezniak_Motyl_NS} and \cite{Motyl_NS_Levy_2012} in the case of Navier-Stokes equations.


\bigskip
\section{Auxiliary results from functional analysis - space $\umath $ and an orthonormal basis in $\hmath $} \label{S:Funct_anal}

\noindent
Let us recall that we have the following three separable Hilbert spaces such that
\begin{equation}  \label{E:V_*,V,H}
    {\vmath }_{\ast } \subset \vmath \subset \hmath , 
\end{equation}
the embedding being dense and continuous. 
Since ${\vmath }_{\ast }$ is a separable Hilbert space, there exists a Hilbert space $\umath $ such that $\umath \subset {\vmath }_{\ast}$, $\umath $ is dense in ${\vmath }_{\ast }$ and 
the embedding
\begin{equation} \label{E:U_comp_V_*}
   \umath \hookrightarrow  {\vmath }_{\ast }
\end{equation}
is compact. In particular, $\umath $ is compactly embedded into the space $\hmath $.
Let us  denote 
$
   \iota  : \umath  \hookrightarrow \hmath 
$
and let 
$
   {\iota }^{*}  :  \hmath  \to \umath  
$
be its adjoint operator.
Note that $\iota $ is compact and since the range of $\iota $ is dense in $\hmath $, ${\iota }^{*} : \hmath  \to \umath  $ is one-to-one. Let us put $D(L) := {\iota }^{*}(\hmath ) \subset \umath $ and
\begin{align}
  Lu :=& \bigl( {\iota }^{*} {\bigr) }^{-1} u , \qquad u \in D(L) .  \label{E:op_L}
\end{align}
It is clear that $L:D(L) \to \hmath  $ is onto. Let us also notice that
\begin{equation} \label{E:op_L_ilsk}
    \ilsk{Lu}{w}{\hmath } = \ilsk{u}{w}{\umath }, \qquad u \in D(L), \quad w \in \umath .
\end{equation}
By equality (\ref{E:op_L_ilsk}) and the densiness of $\umath $ in $\hmath $, we infer that $D(L)$ is dense in $\hmath $.

\bigskip  \noindent
Since $L$ is self-adjoint and ${L}^{-1}$ is compact, there exists an orthonormal basis $\{ {e}_{i} {\} }_{i \in \nat }$ of $\hmath $ composed of the eigenvectors of operator $L$. Let us fix $n \in \nat $ and let $\Pn $ be the operator from ${\umath }^{\prime }$ to $span \{ {e}_{1},..., {e}_{n}\} $ defined by
\begin{equation} \label{E:P_n}
  \Pn {u}^{*} := \sum_{i=1}^{n} \bigl< {u}^{*}| {e}_{i}\bigr> {e}_{i}, \qquad {u}^{*} \in {\umath }^{\prime },
\end{equation}
where $\dual{\cdot }{\cdot }{}$ denotes the duality pairing between the space $\umath $ and its dual ${\umath }^{\prime }$.
Note that the restriction of $\Pn $ to $\hmath $, denoted still by $\Pn $, is given by 
\begin{equation}
   \Pn u = \sum_{i=1}^{n} \ilsk{ u}{ {e}_{i}}{\hmath }  {e}_{i}, \qquad  u \in \hmath  ,
\end{equation}
and thus it is the $\ilsk{\cdot }{\cdot }{\hmath }$-orthogonal projection
onto  $span \{ {e}_{1},..., {e}_{n}\} $. Restrictions of $\Pn $ to other spaces considered in 
 \eqref{E:V_*,V,H} will also be denoted by $\Pn $. Moreover, it is easy to see that
\begin{equation}  \label{E:tP_n-P_n}
   \ilsk{\Pn {u}^{*}}{v}{\hmath } = \dual{ {u}^{*}}{\Pn v}{} , \qquad {u}^{*} \in {\umath }^{\prime }, \quad v \in \umath .
\end{equation}
Let us denote
$
   {\tilde{e}}_{i} := \frac{{e}_{i}}{\norm{{e}_{i}}{U}{}} , \ i \in \nat .
$
The following lemma is a straightforward counterpart of Lemma 2.4 in \cite{Brzezniak_Motyl_NS} corresponding to our abstract setting. 
\begin{lemma} \label{L:P_n|U} \
\begin{description}
\item[(a)] The system $\bigl\{ {\tilde{e}}_{i} {\bigr\} }_{n \in \nat }$ is the orthonormal basis in the space $\bigl( \umath ,\ilsk{\cdot }{\cdot }{\umath }\bigr) $.
\item[(b)] For every $n\in \nat $ and $u \in \umath $
\begin{equation} \label{E:P_n|U}
  \Pn u = \sum_{i=1}^{n} \ilsk{u}{{\tilde{e}}_{i}}{\umath } {\tilde{e}}_{i}, 
\end{equation}
i.e., the restriction of ${P}_{n}$ to $\umath $ is the $\ilsk{\cdot }{\cdot }{\umath }$-projection onto the subspace $span \{ {e}_{1},...,{e}_{n}  \} $.
\item[(c)] For every $u\in \umath $
\begin{description}
\item[(i)] $\lim_{n \to \infty } \norm{{P}_{n}u-u}{\umath }{} =0$,
\item[(ii)] $\lim_{n \to \infty } \norm{{P}_{n}u-u}{{\vmath }_{\ast }}{} =0$, 
\item[(iii)] $\lim_{n \to \infty } \norm{{P}_{n}u-u}{\vmath }{} =0$.
\end{description}
\end{description}
\end{lemma}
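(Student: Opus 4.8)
The plan is to exploit the fundamental identity \eqref{E:op_L_ilsk}, $\ilsk{Lu}{w}{\hmath} = \ilsk{u}{w}{\umath}$, as the device for passing between the $\hmath$- and $\umath$-inner products. First I would extract the eigen-structure of $L$. Writing $L{e}_{i} = {\lambda}_{i}{e}_{i}$ and testing \eqref{E:op_L_ilsk} with $u = {e}_{i} \in D(L)$ and $w = {e}_{j} \in \umath$ gives $\ilsk{{e}_{i}}{{e}_{j}}{\umath} = \ilsk{L{e}_{i}}{{e}_{j}}{\hmath} = {\lambda}_{i}\ilsk{{e}_{i}}{{e}_{j}}{\hmath} = {\lambda}_{i}{\delta}_{ij}$, using that $\{{e}_{i}\}$ is $\hmath$-orthonormal. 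Thus the ${e}_{i}$ are pairwise $\umath$-orthogonal with $\norm{{e}_{i}}{\umath}{2} = {\lambda}_{i}$; moreover, since $\ilsk{L{e}_{i}}{{e}_{i}}{\hmath} = \norm{{e}_{i}}{\umath}{2} > 0$ and ${e}_{i}$ is $\hmath$-unit, ${\lambda}_{i} = \norm{{e}_{i}}{\umath}{2} > 0$. Consequently $\ilsk{{\tilde{e}}_{i}}{{\tilde{e}}_{j}}{\umath} = {\delta}_{ij}$, which gives the $\umath$-orthonormality of $\{{\tilde{e}}_{i}\}$.

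To finish part (a) I would prove completeness. Suppose $u \in \umath$ satisfies $\ilsk{u}{{\tilde{e}}_{i}}{\umath} = 0$ for every $i$, equivalently $\ilsk{u}{{e}_{i}}{\umath} = 0$. By the same identity (now with $u={e}_{i}$, $w=u$), $\ilsk{u}{{e}_{i}}{\umath} = \ilsk{L{e}_{i}}{u}{\hmath} = {\lambda}_{i}\ilsk{{e}_{i}}{u}{\hmath}$, and since ${\lambda}_{i} > 0$ this forces $\ilsk{u}{{e}_{i}}{\hmath} = 0$ for all $i$. As $\{{e}_{i}\}$ is an orthonormal basis of $\hmath$ and $u \in \umath \subset \hmath$, we conclude $u = 0$; hence $\{{\tilde{e}}_{i}\}$ is complete, proving (a).

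For (b) the point is to reconcile the $\hmath$- and $\umath$-projection formulas. The restriction of $\Pn$ to $\hmath$ is already known to be $\Pn u = \sum_{i=1}^{n}\ilsk{u}{{e}_{i}}{\hmath}{e}_{i}$. For $u \in \umath$ I would compute the $i$-th $\umath$-projection term: $\ilsk{u}{{\tilde{e}}_{i}}{\umath}{\tilde{e}}_{i} = {\lambda}_{i}^{-1}\ilsk{u}{{e}_{i}}{\umath}{e}_{i}$, and again $\ilsk{u}{{e}_{i}}{\umath} = {\lambda}_{i}\ilsk{u}{{e}_{i}}{\hmath}$, so this term equals $\ilsk{u}{{e}_{i}}{\hmath}{e}_{i}$. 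Summing over $i = 1,\dots,n$ shows that the $\umath$-orthogonal projection onto $\mathrm{span}\{{e}_{1},\dots,{e}_{n}\}$ coincides with $\Pn u$, which is exactly \eqref{E:P_n|U}.

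Part (c) then follows cheaply. Statement (i) is the standard convergence of orthogonal projections onto the increasing spans of an orthonormal basis: by (a) and (b), $\norm{\Pn u - u}{\umath}{} \to 0$ for every $u \in \umath$. Statements (ii) and (iii) are immediate from (i) together with the continuity of the embeddings $\umath \hookrightarrow {\vmath}_{\ast} \hookrightarrow \vmath$, since $\norm{\Pn u - u}{{\vmath}_{\ast}}{} \le C\,\norm{\Pn u - u}{\umath}{}$ and $\norm{\Pn u - u}{\vmath}{} \le C'\,\norm{\Pn u - u}{\umath}{}$. The only substantive step is the first one, the extraction of $\ilsk{{e}_{i}}{{e}_{j}}{\umath} = {\lambda}_{i}{\delta}_{ij}$ from \eqref{E:op_L_ilsk}; once one recognizes that \eqref{E:op_L_ilsk} is precisely the mechanism for toggling between the two inner products, the remainder is routine Hilbert-space bookkeeping, and I anticipate no genuine obstacle.
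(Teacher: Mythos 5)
Your proof is correct and takes essentially the same route as the paper's (the paper omits the argument, deferring to Lemma 2.4 of \cite{Brzezniak_Motyl_NS}, whose proof is exactly this one): the identity \eqref{E:op_L_ilsk} applied to the eigenvectors of $L$ converts $\hmath$-orthonormality into $\umath$-orthogonality with $\norm{{e}_{i}}{\umath }{2}={\lambda }_{i}>0$, which gives (a) and (b), and (c) then follows from standard Fourier-series convergence together with the continuity of the embeddings $\umath \hookrightarrow {\vmath }_{\ast } \hookrightarrow \vmath $. No gaps.
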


\begin{proof}
The proof is essentially the same as the proof of Lemma 2.4 in \cite{Brzezniak_Motyl_NS} and thus omitted.
\end{proof}


\section{Compactness and tightness results}  \label{S:Compactness_tightness}

\subsection{Deterministic compactness criterion}
 \noindent
Let us recall that we have the following separable Hilbert spaces
\[ 
   \umath \hookrightarrow \vmath \hookrightarrow \hmath \cong {\hmath }^{\prime } \hookrightarrow {\umath }^{\prime },
\]
where the embedding $\umath \hookrightarrow \vmath $ is dense and compact and  the embedding $\vmath \hookrightarrow \hmath $ is continuous.
Let us consider the following functional spaces being the counterparts in our framework of the spaces used in \cite{Motyl_NS_Levy_2012}, see also \cite{Metivier_88} and \cite{Metivier_Viot_88}:
\begin{itemize}
\item $\dmath ([0,T],{\umath }^{\prime }) $ := the space of  c\`{a}dl\`{a}g functions 
 $ u:[0,T] \to {\umath }^{\prime } $  with the topology  $ {\tcal }_{1}$
               induced by the Skorokhod metric,
\item ${L}_{w}^{2}(0,T;\vmath ) $ := the space ${L}^{2} (0,T;\vmath )$ with the weak topology 
                     $ {\tcal}_{2} $,      
\item ${L}^{2}(0,T;{L}^{2}_{loc}(\ocal ))$ := the space of measurable functions 
 $ u:[0,T]\times \ocal  \to \rtd  $ such that for all $ R \in \nat $
\begin{equation}                 
      {p}_{T,R}^{}(u):= 
     \Bigl(  \int_{0}^{T} \int_{{\ocal }_{R}}  |u(t,x){|}^{2} dxdt  {\Bigr) }^{\frac{1}{2}} <\infty  ,
      \label{E:seminorms} 
\end{equation}      
with the topology  $ {\tcal }_{3}$  generated by the seminorms 
$({p}_{T,R}{)}_{R\in \nat } .$                     
\end{itemize}
Let ${\hmath }_{w}$ denote the Hilbert space $\hmath $ endowed with the weak topology. 
Let us consider the fourth space, see \cite{Motyl_NS_Levy_2012},
\begin{itemize} 
\item $\dmath ([0,T];{\hmath }_{w}) $ : = the space of weakly c\`{a}dl\`{a}g functions  
$ u : [0,T] \to \hmath $  with the weakest topology ${\tcal }_{4}$ such that for all 
                          $h \in \hmath  $   the  mappings 
\begin{equation}
 \dmath ([0,T];{\hmath }_{w}) \ni u  \mapsto \ilsk{u(\cdot )}{h}{\hmath } \in \dmath  ([0,T];\rzecz ) 
\label{E:D([0,T];H_w)_cadlag}  
\end{equation} 
are continuous.                     
In particular,  
$\un \to u $ in $\dmath ([0,T];{\hmath }_{w}) $ iff  for all $ h \in \hmath  $:
$
  \ilsk{\un (\cdot )}{h}{\hmath }  \to \ilsk{u(\cdot )}{h}{\hmath }   \mbox{ in the space }  
  \dmath ([0,T];\rzecz ).
$ 
\end{itemize}

  \noindent
We will use the following modification of  \cite[Theorem 2]{Motyl_NS_Levy_2012}.

\noindent
\begin{lemma} \rm 
\label{L:Dubinsky_cadlag_unbound} \it
Let 
\begin{equation} \label{E:Z_cadlag}
 {\zcal }_{}: =   {L}_{w}^{2}(0,T;\vmath )  \cap {L}^{2}(0,T;{L}^{2}_{loc}(\ocal )) 
\cap \dmath ([0,T]; {\umath }^{\prime }) 
  \cap \dmath ([0,T],{\hmath }_{w})
\end{equation}
and let $\tcal $ be  the supremum of the corresponding topologies. 
Let
\[
\kcal \subset {L}^{\infty }(0,T;\hmath ) \cap {L}^{2}(0,T;\vmath ) \cap \dmath ([0,T];{\umath }^{\prime })
\]
satisfy the following three conditions 
\begin{itemize}
\item[(a)  ] for all $u \in \kcal $ and  all $t \in [0,T]$, $u(t) \in \hmath   $ and 
$\, \, \sup_{u\in \kcal } \sup_{s \in[0,T]} {|u(s)|}_{\hmath } < \infty  $, 
\item[(b)] $ \sup_{u\in \kcal } \int_{0}^{T} \norm{u(s)}{\vmath }{2} \, ds < \infty  $,
  i.e. $\kcal $ is bounded in ${L}^{2}(0,T;\vmath )$,
\item[(c)] $\lim{}_{\delta \to 0 } \sup_{u\in \kcal } {w}_{[0,T],{\umath }^{\prime }}(u;\delta ) =0 $.
\end{itemize}
Then $\kcal \subset {\zcal }_{}$  and $\kcal $ is $\tcal $-relatively compact in ${\zcal }_{}$.
\end{lemma}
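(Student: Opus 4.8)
The plan is to establish \emph{sequential} relative compactness, which suffices here. Since $\kcal$ is bounded in ${L}^{\infty}(0,T;\hmath )$ by (a) and in ${L}^{2}(0,T;\vmath )$ by (b), the trajectories live in bounded balls of the separable Hilbert spaces $\hmath $ and ${L}^{2}(0,T;\vmath )$, on which the relevant weak topologies are metrizable; together with the metrizable Skorokhod and Fr\'{e}chet topologies this makes the restriction of the supremum topology $\tcal $ to $\kcal $ metrizable, so $\tcal $-relative compactness is equivalent to extracting convergent subsequences. I therefore fix an arbitrary sequence $({u}_{n}) \subset \kcal $ and aim to produce a subsequence converging, simultaneously, in all four topologies ${\tcal }_{1},\dots ,{\tcal }_{4}$ to a common limit $u$.

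First I would exploit reflexivity: by (b) the sequence is bounded in the Hilbert space ${L}^{2}(0,T;\vmath )$, so a subsequence (not relabelled) satisfies ${u}_{n} \rightharpoonup u$ weakly, which is convergence in ${\tcal }_{2}$. Next, for $\dmath ([0,T];{\umath }^{\prime })$ I would invoke an Arzel\`{a}--Ascoli--M\'{e}tivier type compactness criterion in Skorokhod space: since $\umath \hookrightarrow \hmath $ is compact, its adjoint yields a compact embedding $\hmath \hookrightarrow \hookrightarrow {\umath }^{\prime }$, so by (a) the set $\{ u(t) : u \in \kcal ,\ t \in [0,T] \}$ is relatively compact in ${\umath }^{\prime }$; combined with the uniform modulus condition (c) this gives relative compactness in $\dmath ([0,T];{\umath }^{\prime })$, and a further subsequence converges in ${\tcal }_{1}$, necessarily to the same $u$.

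The core of the argument is convergence in ${\tcal }_{3}$, that is ${p}_{T,R}({u}_{n}-u) \to 0$ for every $R$, and this is where the unbounded domain forces a local argument. For fixed $R$, restriction to the bounded smooth domain ${\ocal }_{R}$ gives the compact Rellich embedding $\vmath \hookrightarrow \hookrightarrow {L}^{2}({\ocal }_{R})$. I would first establish an Ehrling-type interpolation inequality $\norm{v}{{L}^{2}({\ocal }_{R})}{} \le \varepsilon \norm{v}{\vmath }{} + {C}_{\varepsilon ,R} \norm{v}{{\umath }^{\prime }}{}$ for $v \in \vmath $, proved by a compactness--contradiction argument that uses this Rellich embedding together with the injectivity of $\hmath \hookrightarrow {\umath }^{\prime }$. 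Then a localized Aubin--Lions--Simon argument combines the spatial bound (b) in ${L}^{2}(0,T;\vmath )$ with the time equicontinuity (c) in ${\umath }^{\prime }$, interpolated through this inequality, to yield strong convergence of ${u}_{n}$ to $u$ in ${L}^{2}(0,T;{L}^{2}({\ocal }_{R}))$; as this holds for every $R$, convergence in the Fr\'{e}chet topology ${\tcal }_{3}$ follows. I expect this localized Aubin--Lions step to be the main obstacle, since it is precisely here that the lack of global compactness of $\vmath \hookrightarrow \hmath $ on the unbounded $\ocal $ is circumvented, and one must check that the interpolation inequality and the restriction maps interact correctly with the three norms.

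Finally I would upgrade to ${\tcal }_{4}$: fixing $h \in \hmath $, the goal is $\ilsk{{u}_{n}(\cdot )}{h}{\hmath } \to \ilsk{u(\cdot )}{h}{\hmath }$ in $\dmath ([0,T];\rzecz )$. For $h$ in the dense subspace $\umath $ this follows from the ${\tcal }_{1}$-convergence, since the ${\umath }^{\prime }$--$\umath $ duality pairing extends the $\hmath $ inner product; for general $h \in \hmath $ I approximate $h$ by elements of $\umath $ and control the error uniformly in $n$ and $t$ via the bound $\sup_{n} \sup_{t \in [0,T]} {|{u}_{n}(t)|}_{\hmath } < \infty $ from (a). This yields convergence in all four topologies to the single limit $u$, and the inclusions $\kcal \subset \zcal $ and $u \in \zcal $ follow from (a), (b) and the c\`{a}dl\`{a}g regularity passed to the limit, completing the proof. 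The overall scheme mirrors \cite[Theorem 2]{Motyl_NS_Levy_2012}, with the bounded subdomains ${\ocal }_{R}$ replacing the globally compact embedding available in the bounded-domain case.
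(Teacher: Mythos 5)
Your proof is correct and follows essentially the same route as the paper's: reduction to sequential compactness via metrizability on bounded sets, Banach--Alaoglu for ${L}^{2}_{w}(0,T;\vmath )$, the M\'{e}tivier--Skorokhod compactness criterion in $\dmath ([0,T];{\umath }^{\prime })$ using the compact embedding $\hmath \hookrightarrow {\umath }^{\prime }$ together with condition (c), a Lions/Ehrling interpolation inequality on each bounded subdomain ${\ocal }_{R}$ for the ${L}^{2}(0,T;{L}^{2}_{loc}(\ocal ))$ convergence, and a uniform-bound-plus-density argument to upgrade to $\dmath ([0,T];{\hmath }_{w})$. The only differences are presentational: you re-derive inline (Ehrling by a compactness--contradiction argument, the ${\hmath }_{w}$ step by density) what the paper obtains by citing the Lions Lemma and Lemma 2 of the earlier Navier--Stokes paper, and your single-subsequence argument even avoids the paper's final diagonal extraction over $R$.
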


\bigskip  \noindent
The proof of Lemma \ref{L:Dubinsky_cadlag_unbound} is given in Appendix A.

\subsection{Tightness criterion}
 
 \noindent
Let $(\Omega , \fcal ,\p )$ be a probability space with filtration $\mathbb{F}:=({\fcal }_{t}{)}_{t \in [0,T]}$ satisfying the usual hypotheses.
Using Lemma \ref{L:Dubinsky_cadlag_unbound}, we get the corresponding tightness criterion in the
measurable  space  $(\zcal ,\sigma (\zcal ))$, where $\zcal $ is defined by \eqref{E:Z_cadlag}
and $\sigma (\zcal )$ denotes the topological $\sigma $-field, see \cite[Corollary 1]{{Motyl_NS_Levy_2012}}.

\begin{cor}  \label{C:tigthness_criterion_cadlag_unbound}
\it Let $(\Xn {)}_{n \in \nat }$ be a sequence of c\`{a}dl\`{a}g $\mathbb{F}$-adapted 
${\umath }^{\prime }$-valued processes such that
\begin{description}
\item[(a)] there exists a positive constant ${C}_{1}$ such that
\[
         \sup_{n\in \nat}\e \bigl[ \sup_{s \in [0,T]} {|\Xn (s) |}_{\hmath }  \bigr]  \le {C}_{1} ,
\]
\item[(b)] there exists a positive constant ${C}_{2}$ such that
\[
    \sup_{n\in \nat}\e \Bigl[  \int_{0}^{T} \norm{\Xn (s)}{\vmath }{2} \, ds    \Bigr]  \le {C}_{2} ,
\]
\item[(c)]  $(\Xn {)}_{n \in \nat }$ satisfies the Aldous condition  in ${\umath }^{\prime }$.
\end{description}
Let ${\tilde{\p }}_{n}$ be the law of $\Xn $ on ${\zcal }_{}$.
Then for every $\eps >0 $ there exists a compact subset ${K}_{\eps }$ of ${\zcal }_{}$ such that
\[
   {\tilde{\p }}_{n} ({K}_{\eps })  \ge 1 - \eps .
\]
\end{cor}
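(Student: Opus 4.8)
The plan is to derive the tightness of the laws $(\tilde{\p}_n)$ from the deterministic compactness criterion in Lemma \ref{L:Dubinsky_cadlag_unbound} via a standard Prokhorov-type argument: for a given $\eps > 0$ I would construct a set $K_\eps \subset \zcal$ that is $\tcal$-relatively compact (whose closure is then compact) and whose probability under every $\tilde{\p}_n$ is at least $1-\eps$. The set $K_\eps$ will be carved out so that it satisfies exactly the three hypotheses (a), (b), (c) of Lemma \ref{L:Dubinsky_cadlag_unbound}, and the three probabilistic assumptions (a), (b), (c) of the corollary are precisely what is needed to control the probability of the complementary ``bad'' events uniformly in $n$.

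Concretely, I would first fix positive constants $R_1, R_2$ and a modulus-of-continuity control, and define
\begin{align*}
  B_1 &:= \Bigl\{ u : \sup_{s \in [0,T]} |u(s)|_{\hmath} \le R_1 \Bigr\}, \\
  B_2 &:= \Bigl\{ u : \int_0^T \norm{u(s)}{\vmath}{2}\, ds \le R_2 \Bigr\},
\end{align*}
and, using the Aldous condition (c), a set $B_3$ on which the modulus $w_{[0,T],\umath'}(u;\delta)$ is uniformly small (built as a countable intersection over a sequence $\delta_k \to 0$ of events bounding $w_{[0,T],\umath'}(\cdot;\delta_k)$). I would then set $K_\eps := \overline{B_1 \cap B_2 \cap B_3}^{\,\tcal}$. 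By Lemma \ref{L:Dubinsky_cadlag_unbound} this closure is compact in $\zcal$, since $B_1 \cap B_2 \cap B_3$ satisfies hypotheses (a)--(c) of that lemma by construction.

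The probability estimates then proceed via Chebyshev's inequality. From assumption (a) of the corollary, $\p(\Xn \notin B_1) \le R_1^{-1}\, \e[\sup_s |\Xn(s)|_{\hmath}] \le C_1/R_1$, and similarly from (b), $\p(\Xn \notin B_2) \le C_2/R_2$; choosing $R_1, R_2$ large makes each of these less than $\eps/3$ uniformly in $n$. For the modulus term I would invoke the standard consequence of the Aldous condition (c): it yields, for each $k$, a $\delta_k$ so that $\sup_n \p\bigl( w_{[0,T],\umath'}(\Xn;\delta_k) > \eta_k \bigr)$ is as small as desired, and summing a geometric bound over $k$ keeps $\sup_n \p(\Xn \notin B_3) < \eps/3$. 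Combining the three estimates gives $\tilde{\p}_n(K_\eps) = \p(\Xn \in K_\eps) \ge 1 - \eps$ for all $n$.

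The main obstacle, and the one step deserving genuine care, is the passage from the Aldous condition to the uniform smallness of the modulus of continuity in $\umath'$, i.e. justifying the set $B_3$ and its probabilistic bound; this is where the c\`adl\`ag structure and the precise formulation of the Aldous condition enter, and it is the analogue of \cite[Corollary 1]{Motyl_NS_Levy_2012}. Since the statement is explicitly the counterpart in the present abstract setting of that result, I would mirror its argument, noting that assumptions (a) and (b) transfer verbatim and only the tightness-in-the-modulus step requires the Aldous condition rather than a direct uniform bound. The remaining measurability point, that $B_1, B_2, B_3$ are $\sigma(\zcal)$-measurable so that the probabilities are well defined, follows from the continuity of the relevant evaluation and integral functionals on $\zcal$.
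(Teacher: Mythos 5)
Your proposal is correct and is essentially the paper's own argument: the paper proves this corollary by deferring to Corollary 1 of \cite{Motyl_NS_Levy_2012}, whose proof is exactly your construction --- Chebyshev bounds from hypotheses (a) and (b), the standard implication from the Aldous condition to uniform smallness in probability of the modulus $w_{[0,T],{\umath }^{\prime }}(\cdot ;\delta )$, and then the deterministic compactness criterion of Lemma \ref{L:Dubinsky_cadlag_unbound} applied to the intersection of the three ``good'' sets. The one step you flag as delicate (Aldous $\Rightarrow$ modulus control) is likewise handled there by citation to the standard result of M\'{e}tivier, so your treatment matches the intended proof.
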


\bigskip  \noindent 
Let us recall the Aldous condition in the form given by  M\'{e}tivier.

\begin{definition} (M. M\'{e}tivier) \label{D:Aldous}
\rm A sequence $({X}_{n}{)}_{n\in \nat }$  satisfies the \bf  Aldous condition \rm
in the space ${\umath }^{\prime }$
iff

\bigskip  \noindent
 $ \forall \, \eps >0 \ \forall \, \eta >0 \ \exists \, \delta >0
$ such that for every sequence $({{\tau}_{n} } {)}_{n \in \nat }$ of $\mathbb{F}$-stopping times with
${\tau }_{n}\le T$ one has
\[
    \sup_{n \in \nat} \, \sup_{0 \le \theta \le \delta }  \p \bigl\{
    {| {X}_{n} ({\tau }_{n} +\theta )-{X}_{n} ( {\tau }_{n}  ) |}_{{\umath }^{\prime }} \ge \eta \bigr\}  \le \eps .
\]
\end{definition}

\section{Existence of solutions} \label{S:Existence}

\subsection{Faedo-Galerkin approximation}

\noindent
Let $\umath $ be the space defined by \eqref{E:U_comp_V_*}.
Let $\{ {e}_{i} {\} }_{i =1}^{\infty  }$ be the orthonormal basis in $\hmath $ composed of eigenvectors of the operator $L$ defined by \eqref{E:op_L}. In particular, $\{ {e}_{i} {\} }_{i =1}^{\infty  } \subset \umath $.
Let ${\hmath }_{n}:= span \{ {e}_{1}, ..., {e}_{n} \} $ be the subspace with the norm inherited from $\hmath $ and
let $\Pn $ be defined by \eqref{E:P_n}.
Consider the following map
\[
  {\bcal }_{n} (u):= \Pn \bcal ({\chi }_{n}(u),u) , \qquad u \in {\hmath }_{n},
\]
where ${\chi }_{n}:\hmath  \to \hmath  $ is defined by ${\chi }_{n}(u) = {\theta }_{n}(|u {|}_{{\umath }^{\prime }})u$, where  ${\theta }_{n } : \rzecz \to [0,1]$  of class ${\ccal }^{\infty }$ such that
\begin{eqnarray*}
 {\theta }_{n}(r)  = 1 \quad \mbox{if} \quad  r \le n  \quad \mbox{ and } \quad 
  {\theta }_{n}(r)  = 0 \quad \mbox{if} \quad  r \ge n+1 .
\end{eqnarray*}
Since ${\hmath }_{n} \subset H$, ${\bcal }_{n}$ is well defined. Moreover, ${\bcal }_{n}:{\hmath }_{n} \to {\hmath }_{n}$ is globally Lipschitz continuous.
Let us consider the classical Faedo-Galerkin approximation in the space $ {\hmath }_{n}$
\begin{align} 
   &   \un (t)  =  \Pn {u}_{0} - \int_{0}^{t}\bigl[ \Pn \acal \un (s)  + {\bcal }_{n}  \bigl(\un (s) \bigr)
  + \Pn \rcal \un (s) - \Pn f (s)  \bigr] \, ds  \nonumber   \\ 
  &+  \int_{0}^{t} \int_{{Y}_{0}} \Pn F(s,\un ({s}^{-}),y) \tilde{\eta } (ds,dy) 
  +  \int_{0}^{t} \int_{Y\setminus {Y}_{0}} \Pn F(s,\un ({s}^{-}),y) \eta  (ds,dy) \nonumber \\
 &+ \int_{0}^{t} \Pn G(s,\un (s)) \, dW(s) ,   
\, \, \,  t \in [0,T]  .   \label{E:Galerkin}
\end{align}

\bigskip
\begin{lemma} \label{L:Galerkin_existence}
For each $n \in \nat $,  there exists a unique  $\fmath $-adapted,  c\`{a}dl\`{a}g ${\hmath }_{n}$ valued process  ${u}_{n}$ satisfying the Galerkin equation (\ref{E:Galerkin}).
\end{lemma}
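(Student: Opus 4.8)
The plan is to recast the Galerkin system \eqref{E:Galerkin} as a finite-dimensional stochastic evolution equation on $\hmath_n = \operatorname{span}\{e_1,\dots,e_n\}$ driven by the Wiener process $W$ and the Poisson random measure $\eta$, and then invoke a standard existence-and-uniqueness result for such equations with locally Lipschitz, linearly growing coefficients. Since $\hmath_n$ is finite-dimensional and carries the norm inherited from $\hmath$, I would fix the orthonormal basis and write $\un(t) = \sum_{i=1}^n c^{(i)}_n(t) e_i$, so that \eqref{E:Galerkin} becomes an SDE with jumps for the coefficient vector $(c^{(1)}_n,\dots,c^{(n)}_n) \in \rzecz^n$. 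The drift, diffusion and jump coefficients are then obtained by projecting the maps $\Pn\acal$, $\bcal_n$, $\Pn\rcal$, $\Pn f$, $\Pn F$ and $\Pn G$ onto $\hmath_n$ and reading off their components against the basis.

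The key steps, in order, are as follows. First I would verify that every coefficient appearing on $\hmath_n$ is globally Lipschitz and of at most linear growth. The linear maps $\Pn\acal$ and $\Pn\rcal$ are bounded from the finite-dimensional space $\hmath_n$ into itself (all norms on $\hmath_n$ being equivalent), and $\Pn f \in L^2(0,T;\hmath_n)$ by (C.1). The map $\bcal_n$ is globally Lipschitz on $\hmath_n$ — this is exactly the point of the truncation $\chi_n$, as noted in the text just before the lemma: the cutoff $\theta_n$ confines the argument to a bounded set where the locally Lipschitz bound (B.3) becomes a global one, and $\Pn$ is bounded. Second, for the Wiener part I would use (G.2): $G(t,\cdot)$ is Lipschitz from $\vmath$ to $\lhs(Y_W,\hmath)$, and since on $\hmath_n$ the $\vmath$- and $\hmath$-norms are equivalent, $\Pn G(t,\cdot)$ is globally Lipschitz and of linear growth as a map into $\lhs(Y_W,\hmath_n)$. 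Third, for the Poisson part I would use the Lipschitz bound \eqref{E:F_Lipschitz_cond} and the linear-growth bound \eqref{E:F_linear_growth} in (F.2), which give that $y\mapsto \Pn F(t,\cdot;y)$ satisfies the square-integrability and Lipschitz conditions in $L^2(Y,\mu;\hmath_n)$ required for the jump integrals; the decomposition into the compensated integral over $Y_0$ and the ordinary integral over $Y\setminus Y_0$ (with $\mu(Y\setminus Y_0)<\infty$) is handled by the standard theory.

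Having assembled a finite-dimensional SDE with jumps whose coefficients are globally Lipschitz and of linear growth, I would conclude by citing the standard existence-and-uniqueness theorem for such equations driven by a cylindrical Wiener process and a Poisson random measure (e.g. via a Banach fixed-point argument in the space of $\fmath$-adapted c\`{a}dl\`{a}g processes with $\e\sup_{[0,T]}|\cdot|^2<\infty$), which yields a unique $\fmath$-adapted c\`{a}dl\`{a}g $\hmath_n$-valued solution. The c\`{a}dl\`{a}g regularity comes from the jump structure, while the drift and Wiener terms contribute continuous increments.

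The main obstacle is not the abstract citation but checking the global Lipschitz property of the nonlinear term $\bcal_n$ cleanly. The delicate point is that the cutoff in $\chi_n$ is expressed through the ${\umath}'$-norm, whereas (B.3) is stated in the $\vmath$-norm; on the finite-dimensional space $\hmath_n$ all these norms are equivalent, but I would want to record explicitly that $\chi_n$ is itself globally Lipschitz on $\hmath_n$ (since $\theta_n$ is smooth with bounded support and $r\mapsto\theta_n(r)r$ is Lipschitz) and that composing it with the bilinear, locally bounded map $\bcal$ restricted to the bounded set $\{|u|_{\umath'}\le n+1\}$ and then projecting with $\Pn$ preserves the global Lipschitz estimate. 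Once this is made precise, the rest is a routine application of the finite-dimensional theory.
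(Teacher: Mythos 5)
Your proposal is correct and follows essentially the same route as the paper: the paper's proof consists of a single citation to Theorem 9.1 of Ikeda--Watanabe (see also Albeverio--Brze\'{z}niak--Wu), i.e.\ precisely the standard existence-and-uniqueness theorem for finite-dimensional SDEs with jumps and globally Lipschitz, linearly growing coefficients that you invoke. Your verification of the Lipschitz properties of $\Pn \acal$, $\Pn \rcal$, ${\bcal }_{n}$, $\Pn F$ and $\Pn G$ on ${\hmath }_{n}$ (including the remark on the equivalence of the ${\umath }^{\prime }$- and $\vmath $-norms on the finite-dimensional space) simply makes explicit the routine checks that the paper leaves implicit.
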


\begin{proof}
The assertion follows from Theorem 9.1 in \cite{Ikeda_Watanabe_81}, see also \cite{Albeverio_Brzezniak_Wu_2010}. 
\end{proof}

\bigskip  
\noindent
In the following lemma we will prove uniform estimates of the solutions ${u}_{n} $ of \eqref{E:Galerkin}.
Actually, these estimates hold provided the noise terms satisfy only condition \eqref{E:F_linear_growth} in assumption (F.2) and inequality \eqref{E:G} in assumption (G.2). 
The proof of the lemma is based on the It\^{o} formula, see \cite{Ikeda_Watanabe_81} or \cite{Metivier_82},
and the Burkholder-Davis-Gundy inequality, see \cite{Peszat_Zabczyk_2007}.

\bigskip
\begin{lemma} \label{L:Galerkin_estimates }
The processes $({u}_{n} {)}_{n \in \nat }$ satisfy the following estimates.
\begin{itemize}
\item[(i) ]
For every $p\in [1,2+\gamma] $  there exists a  positive constant ${C}_{1}(p)$  such that
\begin{equation} \label{E:H_estimate}
 \sup_{n \ge 1 } \e \bigl( \sup_{0 \le s \le T } |\un (s){|}_{\hmath }^{p} \bigr) \le {C}_{1}(p) .
\end{equation}
\item[(ii)] There exists a positive constant ${C}_{2}$ such that
\begin{equation} \label{E:V_estimate}
  \sup_{n \ge 1 }  \e \bigl[ \int_{0}^{T} \norm{ \un (s)}{\vmath }{2} \, ds \bigr] \le {C}_{2}.
\end{equation}
\end{itemize}
(Here $\gamma >0$ is the constant  defined in assumption (F.2).)
\end{lemma}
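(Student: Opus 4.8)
The plan is to derive both estimates simultaneously by applying the It\^{o} formula to the function $x \mapsto |x|_{\hmath}^p$ applied to the Galerkin process $\un$, and then taking suprema and expectations. The crucial structural feature I would exploit is that the deterministic nonlinear term $\bcal$ vanishes in the energy balance: since $\un(s) \in \hmath_n$ and by the antisymmetry condition (B.2) we have $\dual{\bcal(\chi_n(\un(s)),\un(s))}{\un(s)}{} = 0$, so the $\bcal_n$ term contributes nothing to $\frac{d}{ds}|\un(s)|_{\hmath}^2$. Likewise the linear coercivity enters through the combination appearing in assumption (G.2): the viscosity term $2\dual{\acal \un}{\un}{}$ together with the It\^{o} correction $\norm{\Pn G(s,\un(s))}{\lhs(\yw,\hmath)}{2}$ is controlled from below by inequality \eqref{E:G}, which produces the good term $a\norm{\un(s)}{}{2}$ that will give the $\vmath$-estimate, modulo the lower-order terms $\lambda|\un|_{\hmath}^2 + \kappa$.

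First I would write the It\^{o} formula for $|\un(t)|_{\hmath}^2$ (this is the case $p=2$, the easiest entry point). Using (A.1), (B.2) and (R.1) to handle the drift, and (G.2) together with (F.2)-inequality \eqref{E:F_linear_growth} to handle the jump and Gaussian It\^{o} corrections, I expect to arrive at an inequality of the form
\begin{equation*}
  |\un(t)|_{\hmath}^2 + a\int_0^t \norm{\un(s)}{}{2}\,ds
  \le |\Pn u_0|_{\hmath}^2 + C\int_0^t (1 + |\un(s)|_{\hmath}^2)\,ds + \text{(martingale terms)},
\end{equation*}
where the constant $C$ absorbs $\lambda$, $\kappa$, the constants $C_p$ from \eqref{E:F_linear_growth}, the bound from \eqref{E:G*}, and the deterministic force $f$ (estimated via (A.1) and Young's inequality). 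Taking expectations kills the mean-zero martingale terms (the compensated Poisson integral against $\tilde\eta$ and the $dW$ integral), and the large-jump integral against $\eta$ is controlled in expectation by its compensator using \eqref{E:F_linear_growth} with $p=1,2$ on $Y\setminus Y_0$. A Gronwall argument then yields $\sup_n \e|\un(t)|_{\hmath}^2 \le C_1(2)$ uniformly in $t$, and reinserting this into the inequality bounds $a\,\e\int_0^T\norm{\un(s)}{}{2}\,ds$, which combined with the $\hmath$-bound gives estimate (ii) via \eqref{E:norm_V}.

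For part (i) with general $p \in [1,2+\gamma]$, and to upgrade the pointwise-in-$t$ expectation bound to the supremum-inside-expectation bound $\e[\sup_{s\le T}|\un(s)|_{\hmath}^p]$, I would apply the It\^{o} formula to $|\un(t)|_{\hmath}^p$ (writing $p/2$ powers of the squared norm), take the supremum over $t$ before taking expectation, and estimate the resulting martingale suprema by the Burkholder--Davis--Gundy inequality. The BDG estimate on the Gaussian stochastic integral produces a term like $\e[(\int_0^T |\un|_{\hmath}^{2(p-1)}\norm{G}{\lhs}{2}\,ds)^{1/2}]$, which after inserting \eqref{E:G*} and applying Young's inequality is split into a small multiple of $\e[\sup_{s\le T}|\un(s)|_{\hmath}^p]$ (absorbed into the left-hand side) plus a controllable integral term; the compensated jump integral is handled analogously by BDG for purely discontinuous martingales, which is exactly where the higher moments $4$ and $4+2\gamma$ in \eqref{E:F_linear_growth} become relevant. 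A final Gronwall step, combined with the fact that $|\Pn u_0|_{\hmath} \le |u_0|_{\hmath}$ since $\Pn$ is an orthogonal projection, closes the argument uniformly in $n$.

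The main obstacle I anticipate is the absorption step in part (i): one must carefully arrange the Young's-inequality splittings of the BDG-bounded martingale terms so that the coefficient of $\e[\sup_{s\le T}|\un(s)|_{\hmath}^p]$ on the right is strictly less than $1$, which requires knowing a priori that this supremum is finite (guaranteed for each fixed $n$ by the $\hmath_n$-valued c\`adl\`ag regularity from Lemma \ref{L:Galerkin_existence}) before it can be moved to the left-hand side. A secondary technical point is the correct treatment of the jump terms: the decomposition into the compensated integral over $Y_0$ and the ordinary integral over $Y\setminus Y_0$ must be respected throughout, using the moment bounds \eqref{E:F_linear_growth} for the appropriate exponents on each piece, and the It\^{o} formula for processes with jumps introduces the nonlinear jump-correction sum $\sum_{s\le t}[\,|\un(s)|_{\hmath}^p - |\un(s^-)|_{\hmath}^p - p|\un(s^-)|_{\hmath}^{p-2}\ilsk{\un(s^-)}{\Delta\un(s)}{\hmath}\,]$, whose compensator must be bounded using \eqref{E:F_linear_growth} — this is where uniformity in $n$ must be checked with particular care.
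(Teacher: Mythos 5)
Your overall strategy coincides with the paper's: It\^{o}'s formula for $|x|_{\hmath}^{p}$, antisymmetry (B.2) to annihilate the nonlinear term, inequality \eqref{E:G} to extract the coercive term $a\norm{u_n}{}{2}$, Taylor-type bounds combined with \eqref{E:F_linear_growth} for the jump corrections, Gronwall, and then Burkholder--Davis--Gundy plus a Young splitting for the supremum estimate; the higher moments $4$ and $4+2\gamma$ enter exactly where you say.

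The genuine gap is in the step you yourself flag as the main obstacle, and your proposed resolution of it is wrong. Lemma \ref{L:Galerkin_existence} gives only that $u_n$ is an $\hmath_n$-valued c\`adl\`ag adapted process; this yields $\sup_{t\le T}|u_n(t)|_{\hmath}<\infty$ almost surely, \emph{not} $\e\bigl[\sup_{t\le T}|u_n(t)|_{\hmath}^{p}\bigr]<\infty$. Without that integrability you can neither move the term $\tfrac14\e[\sup_{t}|u_n(t)|_{\hmath}^{p}]$ produced by the BDG/Young step to the left-hand side (it could be $+\infty$), nor assert that ``taking expectations kills the mean-zero martingale terms'': the compensated Poisson and Wiener integrals are a priori only local martingales, and their expectations need not vanish or even exist. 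The paper repairs precisely this with a localization that is absent from your plan: the stopping times $\tau_n(R):=\inf\{t\ge 0:\, |u_n(t)|_{\hmath}\ge R\}\wedge T$. All estimates (drift, jump corrections, Gronwall, BDG, absorption) are carried out for the stopped process $u_n(\cdot\wedge\tau_n(R))$, for which every quantity is finite and the stopped stochastic integrals are honest integrable martingales, and only afterwards does one let $R\uparrow\infty$ and invoke Fatou's lemma. Two smaller points: (a) for $p<2$ the map $x\mapsto|x|_{\hmath}^{p}$ is not $C^{2}$ at the origin, so your plan to apply It\^{o}'s formula for all $p\in[1,2+\gamma]$ fails; the paper applies It\^{o} only for $p\in\{2,2+\gamma\}$ and obtains the intermediate exponents by H\"older/Jensen from the case $p=2+\gamma$. (b) For $p>2$ the It\^{o} correction carries the factor $p(p-1)/2$, so \eqref{E:G} does not combine with the drift as directly as in your $p=2$ sketch; one needs $1-\eps-\tfrac{(p-1)(2-a)}{2}>0$, which is exactly where the hypothesis $a>2-\tfrac{2}{3+\gamma}$ in (G.2) is consumed.
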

\noindent

\bigskip  \noindent
The proof of Lemma \ref{L:Galerkin_estimates } is postponed to Appendix D.

\bigskip
\noindent
The solutions ${u}_{n} $, $n \in \nat $, of the Galerkin equations define  probability measures
$\lcal ({u}_{n})$, $n \in \nat $, on the measurable space $(\zcal , \sigma (\tcal ))$, defined by 
\eqref{E:Z_cadlag} with the topological $\sigma $-field $\sigma (\tcal )$. 
Using Corollary \ref{C:tigthness_criterion_cadlag_unbound} and Lemma \ref{L:Galerkin_estimates }
we will prove that the set of measures 
$\bigl\{ \lcal ({u}_{n} ) , n \in \nat  \bigr\} $ is tight on $(\zcal , \sigma (\tcal ))$.
We use inequalities \eqref{E:V_estimate} and \eqref{E:H_estimate} with $p=2$.

\bigskip
\begin{lemma} \label{L:comp_Galerkin}
The set of measures $\bigl\{ \lcal ({u}_{n} ) , n \in \nat  \bigr\} $ is tight on 
$(\zcal , \sigma (\tcal ))$.
\end{lemma}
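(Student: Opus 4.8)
The plan is to verify the three hypotheses (a), (b), (c) of Corollary \ref{C:tigthness_criterion_cadlag_unbound} for the sequence of Galerkin solutions $(\un)_{n \in \nat}$, which immediately yields tightness of the laws $\lcal(\un)$ on $(\zcal, \sigma(\tcal))$. Conditions (a) and (b) of the corollary are supplied directly by the uniform \emph{a priori} estimates of Lemma \ref{L:Galerkin_estimates } with $p=2$: estimate \eqref{E:H_estimate} gives
\[
  \sup_{n \in \nat} \e\bigl[ \sup_{s \in [0,T]} |\un(s)|_{\hmath} \bigr]
  \le \sup_{n \in \nat} \Bigl( \e\bigl[ \sup_{s \in [0,T]} |\un(s)|_{\hmath}^{2} \bigr] \Bigr)^{1/2}
  < \infty,
\]
by Jensen's (or Cauchy--Schwarz) inequality, while \eqref{E:V_estimate} is exactly condition (b). Thus the only substantive work is to establish condition (c), the Aldous condition in $\umath'$ of Definition \ref{D:Aldous}.

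To verify the Aldous condition, I would fix an arbitrary sequence $(\taun)_{n \in \nat}$ of $\fmath$-stopping times bounded by $T$ and a parameter $\theta \in [0,\delta]$, and estimate $\e\bigl[ |\un(\taun + \theta) - \un(\taun)|_{\umath'} \bigr]$ termwise according to the five contributions in the Galerkin equation \eqref{E:Galerkin}. For each term I would bound the increment over the interval $[\taun, \taun + \theta]$ in the $\umath'$-norm and show it tends to zero, uniformly in $n$, as $\theta \to 0$. The drift pieces $\Pn \acal \un$ and $\Pn \rcal \un$ are controlled using \eqref{E:A_acal_rel} and assumption (R.1) together with the $\vmath$-bound \eqref{E:V_estimate}; the nonlinear term $\bcal_n(\un)$ is handled via the extended estimate \eqref{E:estimate_B_ext} of (B.4), which gives control in $\vmath_\ast' \supset$ (in the dual scale) $\umath'$ by $|\un|_{\hmath}^2$, so that \eqref{E:H_estimate} applies. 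The two Poisson integrals are estimated using the isometry-type bounds for the compensated and the large-jump parts together with the growth condition \eqref{E:F_linear_growth} of (F.2), and the Gaussian stochastic integral via the It\^o isometry combined with the growth bound \eqref{E:G*} of (G.3). In every case the bound is of the form $C \theta^{\alpha}$ (with $\alpha = 1$ for the drift and $\alpha = \tfrac12$ or $1$ for the stochastic terms) times a constant independent of $n$, after invoking the continuity of the embedding $\hmath \hookrightarrow \umath'$ and $\vmath' \hookrightarrow \umath'$.

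Having obtained $\e\bigl[ |\un(\taun + \theta) - \un(\taun)|_{\umath'} \bigr] \le C \theta^{\alpha}$ uniformly in $n$, I would conclude by Chebyshev's inequality: for given $\eps, \eta > 0$,
\[
  \p\bigl\{ |\un(\taun + \theta) - \un(\taun)|_{\umath'} \ge \eta \bigr\}
  \le \frac{1}{\eta} \, \e\bigl[ |\un(\taun + \theta) - \un(\taun)|_{\umath'} \bigr]
  \le \frac{C \theta^{\alpha}}{\eta},
\]
so choosing $\delta$ small enough that $C \delta^{\alpha}/\eta \le \eps$ verifies Definition \ref{D:Aldous}. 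With (a), (b), (c) in hand, Corollary \ref{C:tigthness_criterion_cadlag_unbound} yields, for every $\eps > 0$, a compact set $K_\eps \subset \zcal$ with $\tilde{\p}_n(K_\eps) \ge 1 - \eps$ for all $n$, which is precisely the tightness of $\{ \lcal(\un), n \in \nat \}$ on $(\zcal, \sigma(\tcal))$.

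I expect the main obstacle to be the careful bookkeeping in the Aldous estimate for the nonlinear and stochastic terms: one must route every increment through the correct dual space in the scale $\umath \hookrightarrow \vmath \hookrightarrow \hmath \hookrightarrow \vmath' \hookrightarrow \umath'$ so that the available uniform bounds \eqref{E:H_estimate} and \eqref{E:V_estimate} can be applied, and in particular the nonlinear term genuinely requires the extension (B.4) to be measured in a weaker norm than $\vmath'$ in order to obtain a bound in terms of $|\un|_{\hmath}^2$ alone rather than $\norm{\un}{\vmath}{2}$ pointwise in time.
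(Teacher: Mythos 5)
Your proposal is correct and takes essentially the same route as the paper: conditions (a) and (b) of Corollary \ref{C:tigthness_criterion_cadlag_unbound} are read off from Lemma \ref{L:Galerkin_estimates } with $p=2$, and the Aldous condition is verified exactly as in the paper by termwise moment estimates of the increments in ${\umath }^{\prime }$ of the terms of the Galerkin equation, routing the nonlinear term through ${\vmath }_{\ast }^{\prime }$ via (B.4), the drift terms through ${\vmath }^{\prime }$, and the stochastic integrals through the It\^{o}/Poisson isometries with the growth bounds of (F.2) and (G.3). The only cosmetic difference is that you conclude with an inline Chebyshev argument, whereas the paper encapsulates precisely this step in Lemma \ref{L:Aldous_criterion}.
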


\bigskip  \noindent
The proof of Lemma \ref{L:Galerkin_estimates } is given in Appendix D.

\bigskip  \noindent
Further construction of a martingale solution of equation \eqref{E:equation} is based on the Skorokhod Theorem for nonmetric spaces, see \cite{Brzezniak_Hausenblas_2010} and Appendix C of the present paper.
This theorem guaranties, in particular, the existence of a sequence $({\bar{u}}_{n})$ of $\zcal $-valued stochastic processes such that $\lcal (\un )= \lcal ({\bar{u}}_{n})$, $n \in \nat $, convergent almost surely to a limit process  on a different probability space. The main difficulty accur in passing to the limit in the nonlinear term, in the cases of unbounded domain. Here we need inequality 
\eqref{E:H_estimate} with $p>2$, as well as Assumption (B.5). 
Actually, this is the only place, where we use \eqref{E:H_estimate} with $p>2$.
Let us mention that in the next section, devoted to applications, we will prove that the nonlinear terms appearing in the hydrodynamic-type equations satisfy Assumption (B.5).
Similar problems occur in the noise terms, where Assumptions (F.3) and (G.3) are important.  
The method used in the following proof of Theorem \ref{T:existence} is closely related to the approach due to Brze\'{z}niak and Hausenblas  
\cite{Brzezniak_Hausenblas_2010}.

\bigskip
\subsection{Proof of Theorem \ref{T:existence} }

\bigskip 
\noindent
We will apply the Skorokhod Theorem for the sequence of laws of $({u}_{n}, {\eta }_{n},{W}_{n})$, where
${\eta }_{n}:= \eta $ and ${W}_{n}:= W $, $n \in \nat $.
Since  by Lemma \ref{L:comp_Galerkin} the set of measures $\bigl\{ \lcal ({u}_{n} ) , n \in \nat  \bigr\} $ is tight on the space $\zcal $, the set $ \bigl\{ \lcal ({u}_{n}, {\eta }_{n},{W}_{n}) , n \in \nat  \bigr\}$ is tight on  $\zcal \times {M}_{\bar{\nat }}([0,T]\times Y) \times \ccal ([0,T];{Y}_{W} )$.
Here $\ccal ([0,T];{Y}_{W})$ denotes the space of ${Y}_{W}$-valued continuous functions with the standard supremum-norm and ${M}_{\bar{\nat }}([0,T]\times Y)$ is defined in Appendix B.
By Corollary \ref{C:Skorokhod_J,B,H} and Remark \ref{R:separating_maps}, see Appendix C, there exists a subsequence $({n}_{k}{)}_{k\in \nat }$, a probability space 
$\bigl( \bar{\Omega }, \bar{\fcal },\bar{\p }  \bigr) $ and, on this space, 
$\zcal \times {M}_{\bar{\nat }}([0,T]\times Y) \times \ccal ([0,T];{Y}_{W} )$-valued random variables $({u}_{\ast },{\eta }_{\ast },{W}_{\ast })$, 
$({\bar{u}}_{k}, {\bar{\eta }}_{k},{\bar{W}}_{k})$, $k \in \nat $ such that
\begin{itemize}
\item[(i) ] $\lcal \bigl( ({\bar{u}}_{k}, {\bar{\eta }}_{k},{\bar{W}}_{k}) \bigr) = \lcal \bigl( ({u}_{{n}_{k}}, {\eta }_{{n}_{k}},{W}_{{n}_{k}}) \bigr) $ for all $ k \in \nat $;
\item[(ii) ] ${(\bar{u}}_{k}, {\bar{\eta }}_{k},{\bar{W}}_{k}) \to ({u}_{\ast },{\eta }_{\ast },{W}_{\ast })$ in $\zcal \times {M}_{\bar{\nat }}([0,T]\times Y)  \times \ccal ([0,T];{Y}_{W} )$ with probability $1$ on $\bigl( \bar{\Omega }, \bar{\fcal },\bar{\p }  \bigr) $ as $k \to \infty $;
\item[(iii) ] $ ({\bar{\eta }}_{k} (\bar{\omega }), {\bar{W}}_{k}(\bar{\omega }) )
 =  ({\eta }_{\ast }(\bar{\omega }),{W}_{\ast }(\bar{\omega }))$ for all $\bar{\omega } \in \bar{\Omega }$.
\end{itemize}
We will denote these sequences again by $\bigl(  ({u}_{n}, {\eta }_{n},{W}_{n})  {\bigr) }_{ n\in \nat }$ and 
$\bigl(  ({\bar{u}}_{n}, {\bar{\eta }}_{n},{\bar{W}}_{n})  {\bigr) }_{ n\in \nat } $.
Moreover, ${\bar{\eta }}_{n}$, $n \in \nat $, and ${\eta }_{*}$ are time homogeneous Poisson random measures on $(Y, \ycal )$ with the intensity measure $\mu $ and ${\bar{W}}_{n}$, $n \in \nat $, and ${W}_{*}$ are cylindrical Wiener processes, see \cite[Section 9]{Brzezniak_Hausenblas_2010}.
By the definition of the space $\zcal $, see \eqref{E:Z_cadlag}, we have
\begin{equation} \label{E:Z_cadlag_NS_bar_un_conv} 
  {\bar{u}}_{n} \to {u}_{*} \quad \mbox{in} \quad  {L}_{w}^{2}(0,T;\vmath ) \cap 
{L}^{2}(0,T;{L}^{2}_{loc}(\ocal ))  \cap \dmath ([0,T];{\umath }^{\prime})
  \cap \dmath ([0,T];{\hmath }_{w}) \quad \bar{\p } \mbox{-a.s.}
\end{equation}
Since the random variables ${\bar{u}}_{n}$ and ${u}_{n}$ are identically distributed, we have the following inequalities.
For every $p\in [1,2+\gamma ] $  
\begin{equation} \label{E:H_estimate_bar_u_n}
 \sup_{n \ge 1 } \bar{\e } \bigl( \sup_{0 \le s \le T } |\bun{s}{|}_{\hmath }^{p} \bigr) \le {C}_{1}(p) .
\end{equation}
and
\begin{equation} \label{E:V_estimate_bar_u_n}
  \sup_{n \ge 1 }  \bar{\e } \bigl[ \int_{0}^{T} \norm{ \bun{s}}{\vmath }{2} \, ds \bigr] \le {C}_{2}.
\end{equation}
By inequality \eqref{E:V_estimate_bar_u_n}, there exists a subsequence of $({\bar{u}}_{n})$, still denoted by $({\bar{u}}_{n})$, convergent weakly in the space ${L}^{2}([0,T]\times \bar{\Omega };\vmath )$. Since by 
\eqref{E:Z_cadlag_NS_bar_un_conv} ${\bar{u}}_{n} \to {u}_{\ast }$ in $\zcal $, we infer that 
${u}_{\ast } \in {L}^{2}([0,T]\times \bar{\Omega };\vmath )$, i.e. 
\begin{equation} \label{E:V_estimate_u_ast}
   \bar{\e } \Bigl[ \int_{0}^{T} \norm{{u}_{\ast }}{\vmath }{2} \, ds  \Bigr] < \infty .
\end{equation}  
Similarly, by inequality \eqref{E:H_estimate_bar_u_n} with $p:=2$ we can choose a further subsequence of
$({\bar{u}}_{n})$ convergent weak star in the space ${L}^{2}(\bar{\Omega };{L}^{\infty }(0,T;\hmath ))$, and using \eqref{E:Z_cadlag_NS_bar_un_conv}, deduce that 
\begin{equation} \label{E:H_estimate_u_ast}
   \e \Bigl[ \sup_{t \in [0,T]} {|{u}_{\ast }(t)|}_{\hmath }^{2} \Bigr] < \infty .
\end{equation}
\bf Step 1. \rm 
Let us fix $\varphi \in \umath $. Analogously to \cite{Brzezniak_Hausenblas_2010}, let us denote
\begin{align} 
 &  {\Lambda }_{n}({\bar{u}}_{n}, {\bar{\eta }}_{n},{\bar{W}}_{n}, \varphi) (t)
:=  \ilsk{{\bar{u}}_{n}(0)}{\varphi}{\hmath } \nonumber \\
 & - \int_{0}^{t} \dual{\Pn \acal {\bar{u}}_{n}(s)}{\varphi}{}  ds   
- \int_{0}^{t} \dual{{\bcal }_{n} ({\bar{u}}_{n}(s))}{\varphi}{}  ds 
  - \int_{0}^{t} \dual{\Pn \rcal {\bar{u}}_{n}(s)}{\varphi}{}  ds \nonumber \\
 & + \int_{0}^{t} \dual{\Pn f(s)}{\varphi}{}\, ds 
 + \int_{0}^{t} \int_{Y} \ilsk{\Pn F(s,{\bar{u}}_{n}({s}^{-});y)}{\varphi}{\hmath } \,  {\tilde{\bar{\eta }}}_{n} (ds,dy)
  \nonumber \\
 & 
 + \int_{0}^{t} \int_{Y\setminus {Y}_{0}} \ilsk{\Pn F(s,{\bar{u}}_{n}({s}^{});y)}{\varphi}{\hmath } 
 \, d\mu (y)ds
+ \Dual{\int_{0}^{t}\Pn G(s,{\bar{u}}_{n}(s))\,  d{\bar{W}}_{n}(s)}{\varphi}{} 
\label{E:K_n_bar_u_n}
\end{align}
and 
\begin{align} 
 &  \Lambda  ({u}_{\ast }, {\eta }_{\ast },{W}_{\ast }, \varphi) (t)
:=  \ilsk{{u}_{\ast }(0)}{\varphi}{\hmath } \nonumber \\
 &   - \int_{0}^{t} \dual{ \acal {u}_{\ast }(s)}{\varphi}{}  ds 
 - \int_{0}^{t} \dual{ B({u}_{\ast }(s))}{\varphi}{}  ds 
  - \int_{0}^{t} \dual{ \rcal {u}_{\ast }(s)}{\varphi}{}  ds \nonumber \\
 & + \int_{0}^{t} \dual{ f(s)}{\varphi}{}\, ds 
 + \int_{0}^{t} \int_{Y} \ilsk{ F(s,{u}_{\ast }({s}^{-});y)}{\varphi}{\hmath } 
 \,  \tilde{{\eta }}_{\ast  } (ds,dy) \nonumber \\
 & + \int_{0}^{t} \int_{Y\setminus {Y}_{0}} \ilsk{ F(s,{u}_{\ast }({s}^{});y)}{\varphi}{\hmath } 
 \, d \mu (y)ds
+  \Dual{\int_{0}^{t}G(s,{u}_{\ast }(s))\,  d{W}_{\ast }(s)}{\varphi}{}  ,
\quad t \in [0,T] .
\label{E:K_u*}
\end{align}
We will prove the following lemma.

\pagebreak

\begin{lemma}  \label{L:convergence_existence}
For all $\varphi \in \umath $
\begin{itemize}
\item[(a)] $\lim_{n\to \infty } \bar{\e } \bigl[ \int_{0}^{T} {|\ilsk{{\bar{u}}_{n}(t)-{u}_{\ast }(t)}{\varphi}{\hmath }|}^{2} \, dt \bigr] =0 $,
\item[(b)] $\lim_{n\to \infty } \bar{\e } \bigl[ {|\ilsk{{\bar{u}}_{n}(0)-{u}_{\ast }(0)}{\varphi}{\hmath }|}^{2}  \bigr] =0 $,
\item[(c)] $\lim_{n\to \infty } \bar{\e } \bigl[ \int_{0}^{T} 
 \bigl| \int_{0}^{t}\dual{{P}_{n} \acal {\bar{u}}_{n}(s)-\acal {u}_{\ast }(s)}{\varphi}{} \,ds \bigr| \, dt \bigr] =0 $,
\item[(d)] $\lim_{n\to \infty } \bar{\e } \bigl[ \int_{0}^{T} 
 \bigl| \int_{0}^{t}\dual{{P}_{n} {B}_{n} ({\bar{u}}_{n}(s))- B({u}_{\ast }(s))}{\varphi}{} \,ds \bigr| \, dt \bigr] =0 $, 
\item[(e)] $\lim_{n\to \infty } \bar{\e } \bigl[ \int_{0}^{T} 
 {\bigl| \int_{0}^{t}\dual{{P}_{n} \rcal {\bar{u}}_{n}(s)-\rcal {u}_{\ast }(s)}{\varphi}{} \,ds \bigr| }^{2} 
   \, dt \bigr] =0 $, 
\item[(f)]  $\lim_{n\to \infty } \bar{\e } \bigl[ \int_{0}^{T} 
 \bigl| \int_{0}^{t} \int_{Y} \dual{\Pn F(s,{\bar{u}}_{n}({s}^{-});y) -F(s,{u}_{\ast }({s}^{-});y)}{\varphi}{}  \, d\mu (y) ds {\bigr| }^{2} \, dt \bigr] =0 $,    
\item[(g)] $\lim_{n\to \infty } \bar{\e } \bigl[ \int_{0}^{T} 
 \bigl| \int_{0}^{t} \int_{Y} \dual{\Pn F(s,{\bar{u}}_{n}({s}^{-});y) -F(s,{u}_{\ast }({s}^{-});y)}{\varphi}{}  \, {\tilde{\eta }}_{\ast } (ds,dy) {\bigr| }^{2} \, dt \bigr] =0 $,
\item[(h)] $\lim_{n\to \infty } \bar{\e } \bigl[ \int_{0}^{T} 
 \bigl| \dual{ \int_{0}^{t} [\Pn G(s,{\bar{u}}_{n}(s)) - G(s,{u}_{\ast }(s))] \, d {W}_{\ast }(s) }{\varphi}{}
 {\bigr| }^{2} \, dt \bigr] =0 $.
\end{itemize}
\end{lemma}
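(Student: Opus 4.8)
The plan is to handle the eight limits (a)--(h) by one common scheme and then to isolate the genuinely delicate nonlinear term~(d). In every term I would first use the self-adjointness relation \eqref{E:tP_n-P_n} to move the projection off the operator and onto the test function, writing $\dual{\Pn(\cdot)}{\varphi}{}=\dual{(\cdot)}{\Pn\varphi}{}$, and then recall from Lemma~\ref{L:P_n|U}(c) that $\Pn\varphi\to\varphi$ strongly in $\umath$, ${\vmath}_{\ast}$ and $\vmath$, hence also in $\hmath$. Each integrand is then split as [operator at ${\bar{u}}_{n}$ tested against $\Pn\varphi-\varphi$] $+$ [operator at ${\bar{u}}_{n}$ minus operator at ${u}_{\ast}$, tested against the fixed $\varphi$]. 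The first piece is absorbed by the structural bounds (A.1),(B.4),(R.1),(F.2),(G.3) together with $\Pn\varphi\to\varphi$; the second piece is where the a.s.\ convergence ${\bar{u}}_{n}\to{u}_{\ast}$ in $\zcal$ from \eqref{E:Z_cadlag_NS_bar_un_conv} and the specific continuity assumptions enter. Throughout, I would upgrade pointwise (in $\bar\omega$) convergence to the asserted $L^{1}(\bar\Omega)$ or $L^{2}(\bar\Omega)$ convergence by Vitali's theorem, the uniform integrability coming from the moment bounds \eqref{E:H_estimate_bar_u_n} with $p=2+\gamma>2$.

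For the easy terms: (a) follows from the strong $L^{2}(0,T;{L}^{2}_{loc}(\ocal))$ convergence, splitting $\ilsk{{\bar{u}}_{n}-{u}_{\ast}}{\varphi}{\hmath}$ into an integral over a large ball ${\ocal}_{R}$ (vanishing by $L^{2}_{loc}$ convergence) and a tail controlled by the $\hmath$-bound \eqref{E:H_estimate_bar_u_n} and the smallness of $\varphi$ off ${\ocal}_{R}$; (b) follows from convergence in $\dmath([0,T];{\hmath}_{w})$, which yields $\ilsk{{\bar{u}}_{n}(0)}{\varphi}{\hmath}\to\ilsk{{u}_{\ast}(0)}{\varphi}{\hmath}$. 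For the linear terms, after the projection is moved (c) reduces to $\int_{0}^{t}\dirilsk{{\bar{u}}_{n}}{\Pn\varphi}{}\,ds\to\int_{0}^{t}\dirilsk{{u}_{\ast}}{\varphi}{}\,ds$, using (A.1), the weak convergence in ${L}^{2}_{w}(0,T;\vmath)$ and $\Pn\varphi\to\varphi$ in $\vmath$, while (e) reduces, via the Hilbert-space adjoint $\rcal^{\ast}\colon\vmath\to\hmath$, to an (a)-type argument with $\varphi$ replaced by $\rcal^{\ast}\varphi\in\hmath$ plus a remainder estimated by (R.1) and $\Pn\varphi\to\varphi$ in $\vmath$.

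For the noise terms I would convert the stochastic integrals into deterministic $L^{2}$-norms via the It\^{o} isometries, recalling that ${\bar{\eta}}_{n}={\eta}_{\ast}$ and ${\bar{W}}_{n}={W}_{\ast}$ hold pathwise (item (iii) above). For (g) the isometry gives $\bar\e\int_{0}^{T}\bigl|\int_{0}^{t}\int_{Y}g\,{\tilde{\eta}}_{\ast}\bigr|^{2}dt\le T\,\bar\e\int_{0}^{T}\int_{Y}g^{2}\,d\mu\,ds$, reducing the claim to the $L^{2}([0,T]\times Y,\,dl\otimes\mu)$ continuity of ${\tilde{F}}_{\varphi}$ from (F.3); for (h) it reduces the claim to the $L^{2}([0,T];\lhs({Y}_{W},\rzecz))$ continuity of ${\tilde{G}}_{\varphi}$ from (G.3). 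Term (f) is the pathwise compensator integral against $d\mu\,ds$, handled by (F.3), the Cauchy--Schwarz inequality and $\mu(Y\setminus{Y}_{0})<\infty$ from (F.1). In each case the remainders carrying $\Pn\varphi-\varphi$ are controlled by the linear-growth bounds in (F.2),(G.3) together with \eqref{E:H_estimate_bar_u_n}.

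The main obstacle is (d). After moving the projection I would first dispose of the cutoff ${\chi}_{n}$: on the event ${\Omega}_{n}:=\{\sup_{t}{|{\bar{u}}_{n}(t)|}_{{\umath}^{\prime}}\le n\}$ one has ${\chi}_{n}({\bar{u}}_{n})={\bar{u}}_{n}$, and by Chebyshev together with the continuous embedding $\hmath\hookrightarrow{\umath}^{\prime}$ and \eqref{E:H_estimate_bar_u_n}, $\bar\p({\Omega}_{n}^{c})\to0$. On ${\Omega}_{n}$ the integrand becomes $\dual{\bcal({\bar{u}}_{n})}{\Pn\varphi}{}$, which I would split as $\dual{\bcal({\bar{u}}_{n})-\bcal({u}_{\ast})}{\varphi}{}+\dual{\bcal({\bar{u}}_{n})}{\Pn\varphi-\varphi}{}$. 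The first summand vanishes pathwise by Assumption (B.5) in the form of Remark~\ref{R:B.5_comment}, since ${\bar{u}}_{n}$ is pathwise bounded in ${L}^{2}(0,T;\hmath)$ (being weakly convergent in ${L}^{2}_{w}(0,T;\vmath)$) and ${\bar{u}}_{n}\to{u}_{\ast}$ in ${L}^{2}(0,T;{L}^{2}_{loc}(\ocal))$; the second is estimated via the extension bound \eqref{E:estimate_B_ext} by ${c}_{2}\,\norm{\Pn\varphi-\varphi}{{\vmath}_{\ast}}{}\int_{0}^{T}{|{\bar{u}}_{n}|}_{\hmath}^{2}\,ds$ and vanishes since $\Pn\varphi\to\varphi$ in ${\vmath}_{\ast}$. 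The delicate point is that $\bcal$ is quadratic, so to pass from this pathwise convergence to the asserted $L^{1}(\bar\Omega)$ convergence I would invoke Vitali's theorem, and the uniform integrability of ${|{\bar{u}}_{n}|}_{\hmath}^{2}$ is exactly what the higher moment bound \eqref{E:H_estimate_bar_u_n} with $p=2+\gamma>2$ supplies; this is the one place where $p>2$ is indispensable.
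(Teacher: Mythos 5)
Your proposal follows, in substance, the paper's own proof: you move $\Pn$ onto the test function via \eqref{E:tP_n-P_n} and Lemma \ref{L:P_n|U}(c), you exploit the $\bar{\p}$-a.s. convergence \eqref{E:Z_cadlag_NS_bar_un_conv} together with (A.1), (B.5), (F.3) and (G.3), and you pass to convergence in expectation by the Vitali theorem, with uniform integrability supplied by \eqref{E:H_estimate_bar_u_n} with $p=2+\gamma>2$. In particular your treatment of (d) --- removal of the cutoff ${\chi}_{n}$, the splitting into $\dual{\bcal({\bar{u}}_{n})-\bcal({u}_{\ast})}{\varphi}{}$ plus $\dual{\bcal({\bar{u}}_{n})}{\Pn\varphi-\varphi}{}$, assumption (B.5) in the form of Remark \ref{R:B.5_comment}, the extension bound \eqref{E:estimate_B_ext}, and the observation that this is the one place where $p>2$ is indispensable --- is exactly the paper's argument. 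Two of your variants are legitimate and even slightly better: deriving the pathwise boundedness of $({\bar{u}}_{n})$ in ${L}^{2}(0,T;\hmath )$ from the a.s. weak convergence in ${L}^{2}_{w}(0,T;\vmath )$ (uniform boundedness) is more careful than the paper's appeal to the moment bound \eqref{E:V_estimate_bar_u_n}, and your reduction of (e) via the adjoint ${\rcal }^{\ast }$ is a harmless alternative to the paper's use of the weak convergence $\rcal {\bar{u}}_{n}\to \rcal {u}_{\ast }$ in ${L}^{2}_{w}(0,T;{\vmath }^{\prime })$.

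The one step that is not justified as written is (a). You split $\ilsk{{\bar{u}}_{n}(t)-{u}_{\ast }(t)}{\varphi}{\hmath }$ into an integral over ${\ocal }_{R}$ and a tail over $\ocal \setminus {\ocal }_{R}$; this presupposes that the inner product of $\hmath $ is local, i.e. given by integrating a pointwise product over $\ocal $. The abstract framework assumes only that the \emph{norm} of $\hmath $ is equivalent to the ${L}^{2}(\ocal ;\rtd )$-norm; the inner product itself is abstract and need not localize (it could differ from the ${L}^{2}$ inner product by a bounded nonlocal perturbation), so the decomposition has no meaning in this generality. The repair is immediate and available to you: either argue as the paper does --- the convergence ${\bar{u}}_{n}\to {u}_{\ast }$ in $\dmath ([0,T];{\hmath }_{w})$, which you already invoke for (b), gives $\ilsk{{\bar{u}}_{n}(t)}{\varphi}{\hmath }\to \ilsk{{u}_{\ast }(t)}{\varphi}{\hmath }$ for a.e. $t\in [0,T]$, and then pathwise dominated convergence in $t$ plus Vitali in $\bar{\omega }$ finish (a) --- or, if you insist on the ${L}^{2}(0,T;{L}^{2}_{loc}(\ocal ))$ route, first write $\ilsk{w}{\varphi}{\hmath }=\ilsk{w}{S\varphi}{{L}^{2}}$, where $S$ is the bounded self-adjoint operator on $\hmath $ (a closed subspace of ${L}^{2}(\ocal ;\rtd )$ by norm equivalence) representing the $\hmath $-inner product, and then localize the genuine ${L}^{2}$-pairing against $S\varphi $. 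With this correction the rest of your scheme, including (e), which relies on an (a)-type statement for ${\rcal }^{\ast }\varphi \in \hmath $, goes through.
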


\bigskip  \noindent
\begin{proof} Let us fix $\varphi \in \umath $.
\bf Ad. (a). \rm   
Since by (\ref{E:Z_cadlag_NS_bar_un_conv})
$ {\bar{u}}_{n} \to {u}_{\ast } $ in $\dmath ([0,T];{\hmath }_{w})$ $\bar{\p }$-a.s., 
$\ilsk{{\bar{u}}_{n}(\cdot )}{\varphi}{\hmath } \to \ilsk{{u}_{\ast }(\cdot )}{\varphi}{\hmath } $
in $\dmath ([0,T];\rzecz )$ $\bar{\p }$-a.s. Hence, in particular for almost all $t \in [0,T]$
\[
  \lim_{n \to \infty } \ilsk{{\bar{u}}_{n}(t)}{\varphi}{\hmath } = \ilsk{{u}_{\ast }(t)}{\varphi}{\hmath },
  \qquad  \mbox{ $\bar{\p }$-a.s. }
\]
Since by \eqref{E:H_estimate_bar_u_n}, 
 $\sup_{t\in [0,T]} {|{\bar{u}}_{n}(t)|}_{\hmath }^{2} < \infty $, $\bar{\p }$-a.s.,
using the Dominated Convergence Theorem we infer that 
\begin{equation} \label{E:Vitali_{u}_{n}_conv}
  \lim_{n \to \infty } \int_{0}^{T} {| \ilsk{{\bar{u}}_{n}(t) -{u}_{\ast }(t)}{\varphi}{\hmath } |}^{2} \, dt =0  \qquad  \mbox{ $\bar{\p }$-a.s. }.
\end{equation} 
Moreover,  by the H\"{o}lder inequality and \eqref{E:H_estimate_bar_u_n}
for every $n \in \nat$ and every $r\in \bigl( 1, 1+ \frac{\gamma }{2}\bigr] $
\begin{align}
&  \bar{\e }\Bigl[ \Bigl|  \int_{0}^{T} {|{\bar{u}}_{n}(t) -{u}_{\ast }(t) | }_{\hmath }^{2} \, dt 
 {\Bigr| }^{r}\Bigr] 
  \le c \bar{\e }\Bigl[  \int_{0}^{T} \bigl( {| {\bar{u}}_{n}(t)  | }_{\hmath }^{2r}
+ {| {u}_{\ast }(t)| }_{\hmath }^{2r} \bigr) \, dt \Bigr]   \le \tilde{c} {C}_{1}(2r) , 
 \label{E:Vitali_{u}_{n}_est}
\end{align} 
where $c, \tilde{c}>0$ are some constants .
The assertion (a) follows now from \eqref{E:Vitali_{u}_{n}_conv}, \eqref{E:Vitali_{u}_{n}_est} and the Vitali Theorem. 

 \bigskip  \noindent
\bf Ad. (b). \rm 
Since by \eqref{E:Z_cadlag_NS_bar_un_conv} ${\bar{u}}_{n} \to {u}_{\ast }$ in $\dmath (0,T;{\hmath }_{w})$ $\bar{\p }$-a.s. and ${u}_{\ast }$ is right-continuous at $t=0$, we infer that
$ \ilsk{{\bar{u}}_{n}(0)}{\varphi}{\hmath } \to \ilsk{{u}_{\ast }(0)}{\varphi}{\hmath } $, $\bar{\p }$-a.s.
By \eqref{E:H_estimate_bar_u_n} assertion (b) follows from the Vitali Theorem.

\bigskip  \noindent
\bf Ad. (c). \rm 
By \eqref{E:Z_cadlag_NS_bar_un_conv} ${\bar{u}}_{n} \to {u}_{\ast }$ in ${L}_{w}^{2}(0,T;\vmath )$, 
$\bar{\p }$-a.s.
Moreover, since $\varphi \in \umath $,  $\Pn \varphi \to \varphi $ in $\vmath $, see Lemma \ref{L:P_n|U} (c) in Section \ref{S:Funct_anal}.
Thus by   \eqref{E:A_acal_rel} we infer that $\bar{\p }$-a.s.
\begin{align} 
 &\lim_{n\to \infty } \int_{0}^{t} \dual{\Pn \acal {\bar{u}}_{n}(s)}{\varphi}{} \, ds 
 = \lim_{n \to \infty } \int_{0}^{t} \dirilsk{{\bar{u}}_{n}(s)}{\Pn \varphi}{} \, ds 
 = \int_{0}^{t} \dirilsk{ {u}_{\ast }(s)}{\varphi}{} \, ds 
 = \int_{0}^{t} \dual{ {u}_{\ast }(s)}{\varphi}{} \, ds \qquad \mbox{} 
 \label{E:Vitali_{A}_{n}_conv}
\end{align}
By   \eqref{E:A_acal_rel}, the H\"{o}lder inequality and \eqref{E:V_estimate_bar_u_n} we have the following inequality
 for all $t \in [0,T]$ and  $n \in \nat $
\begin{eqnarray} 
& &  \bar{\e }\Bigl[ \Bigl | \int_{0}^{t}  \dual{ \Pn \acal {\bar{u}}_{n}(s)}{\varphi}{} \, ds {\Bigr| }^{2} \Bigr]
  = \bar{\e } \Bigl[ \Bigl | \int_{0}^{t} \dirilsk{{\bar{u}}_{n}(s)}{\Pn \varphi}{} \, ds {\Bigr| }^{2} \Bigr] \nonumber \\
& & \le c \, \| { \varphi}{\| }_{\umath }^{2} \,  \bar{\e } 
 \Bigl[  \int_{0}^{T} \norm{ {\bar{u}}_{n}(s)}{\vmath }{2} \, ds  \Bigr] 
  \le \tilde{c}{C}_{2} 
  \label{E:Vitali_{A}_{n}_est}
\end{eqnarray}
for some constants $c, \tilde{c}>0$. 
By \eqref{E:Vitali_{A}_{n}_conv}, \eqref{E:Vitali_{A}_{n}_est} and the Vitali Theorem we infer that for all $t \in [0,T]$
\[
  \lim_{n\to \infty } \bar{\e } \Bigl[ \bigl| \int_{0}^{t} 
\dual{\Pn \acal {\bar{u}}_{n}(s)-\acal {u}_{\ast }(s)}{\varphi}{} \, ds 
 {\bigr| }^{} \Bigr] =0.
\]
Hence assertion (c) follows from \eqref{E:V_estimate_bar_u_n} and the Dominated Convergence Theorem.

\bigskip  \noindent
\bf Ad. (d). \rm 
Let us move to the nonlinear term. Here assumption (B.5) will be very important.
Since by \eqref{E:V_estimate_bar_u_n} and \eqref{E:norm_V} the sequence $({\bar{u}}_{n})$ is bounded in ${L}^{2}(0,T;\hmath )$ and 
by \eqref{E:Z_cadlag_NS_bar_un_conv} ${\bar{u}}_{n} \to {u}_{\ast }$ in 
${L}^{2}(0,T;{L}^{2}_{loc}(\ocal )) $, $\bar{\p }$-a.s.,
by assumption (B.5) (see Remark \ref{R:B.5_comment}) we infer that $\bar{\p }$-a.s.
for all $t \in [0,T]$ and all $\varphi \in {\vmath }_{\ast}$
\[
 \lim_{n\to \infty } \int_{0}^{t} \dual{ \bcal ({\bar{u}}_{n}(s)) - \bcal ({u}_{*}(s))}{\varphi}{} \, ds =0 .
\]
It is easy to see that for sufficiently large $n \in \nat $, 
$
     {\bcal }_{n} ({\bar{u}}_{n}(s)) = \Pn \bcal ({\bar{u}}_{n}(s)) , \ s \in [0,T].
$
Moreover, if $\varphi \in \umath $ then by Lemma \ref{L:P_n|U} (c), $\Pn \varphi \to \varphi $ in ${\vmath }_{\ast }$. Since $\umath  \subset {\vmath }_{\ast }$, we infer that for all $\varphi \in \umath $ and all $t \in [0,T]$
\begin{equation} \label{E:Vitali_{B}_{n}_conv}
 \lim_{n\to \infty } \int_{0}^{t} \dual{ {\bcal }_{n} ({\bar{u}}_{n}(s)) - \bcal ({u}_{\ast }(s))}{\varphi}{} \, ds =0 
 \qquad \bar{\p }\mbox{-a.s}.
\end{equation}
By the H\"{o}lder inequality, \eqref{E:estimate_B_ext} and  \eqref{E:H_estimate_bar_u_n} we obtain for all 
 $t \in [0,T]$,  $r\in \bigl(0,\frac{\gamma }{2}\bigr] $ and  $n \in \nat $
\begin{align} 
&  \bar{\e } \Bigl[ \Bigl| \int_{0}^{t} \dual{{\bcal }_{n} ({\bar{u}}_{n}(s)) }{\varphi}{} \, ds {\Bigr| }^{1+r}  \Bigr] 
\le \bar{\e }  \Bigl[ \Bigl( \int_{0}^{t} {|{\bcal }_{n} ({\bar{u}}_{n}(s))| }_{{\vmath }_{\ast }^{\prime }}  \norm{\varphi}{{\vmath }_{\ast }}{} \, ds {\Bigr) }^{1+r}  \Bigr] \nonumber \\
&  \le ({c}_{2}\norm{\varphi}{{\vmath }_{\ast }}{} {)}^{1+r} \, {t}^{r} \, 
 \e \Bigl[ \int_{0}^{t} {| {\bar{u}}_{n}(s) | }_{\hmath }^{2+2r} \, ds \Bigr]
 \le \tilde{C} \bar{\e } \bigl[ \sup_{s\in [0,T]} {|{\bar{u}}_{n}(s) | }_{\hmath }^{2+2r}   \bigr] 
  \le \tilde{C} {C}_{1}(2+2r) .  \label{E:Vitali_{B}_{n}_est}
\end{align}
In view of \eqref{E:Vitali_{B}_{n}_conv} and \eqref{E:Vitali_{B}_{n}_est}, by the Vitali Theorem we 
obtain for all $t\in [0,T]$
\begin{equation} \label{E:Lebesque_{B}_{n}_conv}
  \lim_{n \to \infty } \bar{\e } \Bigl[ \Bigl| \int_{0}^{t} 
 \dual{\Bn ({\bar{u}}_{n}(s)) - B({u}_{\ast }(s))}{\varphi}{} \, ds {\Bigr| }^{}  \Bigr] =0 .
\end{equation}
Since by \eqref{E:H_estimate_bar_u_n} for all $t \in [0,T]$ and all $n \in \nat $
\[
   \bar{\e } \Bigl[ \bigl| \int_{0}^{t} \dual{\Bn ({\bar{u}}_{n}(s)) }{\varphi}{} \, ds {\bigr| }^{} \Bigr] \le c \bar{\e } \bigl[ \sup_{s\in [0,T]} {|{\bar{u}}_{n}(s))| }_{\hmath }^{2}   \bigr]
 \le c {C}_{1}(2) ,
\]
where $c>0$ is a certain constant,
by \eqref{E:Lebesque_{B}_{n}_conv} and the Dominated Convergence Theorem, we infer that 
assertion (d) holds.

\bigskip \noindent
\bf Ad. (e).  \rm 
Since by  \eqref{E:Z_cadlag_NS_bar_un_conv} ${\bar{u}}_{n} \to {u}_{\ast }$ in ${L}^{2}_{w}(0,T;\vmath )$ and the embedding $\vmath \subset \hmath $ is continuous, 
${\bar{u}}_{n} \to {u}_{\ast }$ in ${L}^{2}_{w}(0,T;\hmath )$, $\bar{\p }$-a.s.
Furthemore, since $\rcal : \hmath \to {\vmath }^{\prime }$ is linear and continuous, 
$\rcal {\bar{u}}_{n} \to \rcal {u}_{\ast }$ in ${L}^{2}_{w}(0,T;{\vmath }^{\prime })$, $\bar{\p }$-a.s.
Since moreover by Lemma \ref{L:P_n|U} (c) $\Pn \varphi \to \varphi $ in $\vmath $, we infer that
\begin{equation} 
\lim_{n\to \infty } \int_{0}^{t} \dual{\Pn \rcal {\bar{u}}_{n}(s)}{\varphi}{} \, ds 
 = \lim_{n \to \infty } \int_{0}^{t} \dual{\rcal {\bar{u}}_{n}(s)}{\Pn \varphi}{} \, ds \nonumber \\
= \int_{0}^{t} \dual{\rcal {u}_{\ast }(s)}{\varphi}{} \, ds 
  \quad \mbox{$\bar{\p }$-a.s.} 
 \label{E:Vitali_{R}_{n}_conv}
\end{equation}
By assumption (R.1), Lemma \ref{L:P_n|U} (c) and \eqref{E:H_estimate_bar_u_n} we have the following inequalities  for all $r \in (0,\gamma )$, $t \in [0,T]$ and  $n \in \nat $
\begin{align} 
&   \bar{\e }\Bigl[ \bigl | \int_{0}^{t}  \dual{ \Pn \rcal {\bar{u}}_{n}(s)}{\varphi}{} \, ds {\bigr| }^{2+r} \Bigr]
  = \bar{\e } \Bigl[ \bigl | \int_{0}^{t} \dual{{\bar{u}}_{n}(s)}{\Pn \varphi}{} \, ds {\bigr| }^{2+r} \Bigr] \nonumber \\
&  \le c \, \norm{ \varphi}{\vmath }{2+r} \, 
  \bar{\e } \Bigl[  \sup_{s\in [0,T]} {| {\bar{u}}_{n}(s)|}_{\hmath }^{2+r}   \Bigr] 
  \le \tilde{c}{C}_{1}(2+r) 
  \label{E:Vitali_{R}_{n}_est}
\end{align}
for some constants $c, \tilde{c}>0$. 
Therefore by \eqref{E:Vitali_{R}_{n}_conv}, \eqref{E:Vitali_{R}_{n}_est} and the Vitali Theorem we infer that for all $t \in [0,T]$
\[
  \lim_{n\to \infty } \bar{\e } \Bigl[ \bigl| \int_{0}^{t} 
\dual{\Pn \rcal {\bar{u}}_{n}(s)-\rcal {u}_{\ast }(s)}{\varphi}{} \, ds 
 {\bigr| }^{2} \Bigr] =0.
\]
Hence assertion (d) follows from  \eqref{E:H_estimate_bar_u_n} and the Dominated Convergence Theorem.

\bigskip  \noindent
\bf Ad. (f). \rm 
Assume that $\varphi \in \hmath $.
For all $t \in [0,T]$ we have
\begin{align*}
&  \int_{0}^{t} \int_{Y} {| \dual{F(s, {\bar{u}}_{n}({s}^{-});y)- F(s,{u}_{\ast}({s}^{-});y)}{\varphi}{} |}^{2} \, d\mu (y)ds  \nonumber \\  
& = \int_{0}^{t} \int_{Y} {| {\tilde{F}}_{\varphi}({\bar{u}}_{n})(s,y) - {\tilde{F}}_{\varphi}({u}_{\ast })(s,y)| }^{2} \, d\mu (y)ds  
 \le \norm{{\tilde{F}}_{\varphi}({\bar{u}}_{n}) - {\tilde{F}}_{\varphi}({u}_{\ast })}{{L}^{2}([0,T]\times Y;\rzecz )}{2}, 
\end{align*}
where ${\tilde{F}}_{\varphi}$ is the mapping defined by \eqref{E:F**}.  
Since by \eqref{E:Z_cadlag_NS_bar_un_conv}
${\bar{u}}_{n} \to {u}_{*}$ in ${L}^{2}(0,T;{L}^{2}_{loc}(\ocal )) $, $\bar{\p }$-a.s.,
by  assumption (F.3) we infer that for all $t\in [0,T]$ 
\begin{eqnarray}
& & \lim_{n \to \infty } \int_{0}^{t} \int_{Y} 
 {| \ilsk{F(s, {\bar{u}}_{n}({s}^{-});y)- F(s,{u}_{\ast }({s}^{-});y)}{\varphi}{\hmath } |}^{2} 
 \, d\mu (y)ds  =0. 
\label{E:Vitali_{noise}_{n}_conv}
\end{eqnarray}
Moreover, by inequality \eqref{E:F_linear_growth} in assumption (F.2) and by \eqref{E:H_estimate_bar_u_n}
for every $t \in [0,T]$ every $r \in \bigl[ 1,1+ \frac{\gamma }{2} \bigr] $ and every $n \in \nat $ the following inequalities hold
\begin{align} 
&  \bar{\e } \Bigl[ \bigl| \int_{0}^{t} \int_{Y} {| 
\ilsk{F(s, {\bar{u}}_{n}({s}^{-});y)- F(s,{u}_{\ast }({s}^{-});y)}{\varphi}{\hmath }|}^{2} 
 \, d\mu (y)ds {\bigr| }^{r} \Bigr] \nonumber \\
&  \le {2}^{r} {|\varphi|}_{H}^{2r}\bar{\e } \Bigl[ \bigl| \int_{0}^{t} \int_{Y} \bigl\{ 
 {| F(s, {\bar{u}}_{n}({s}^{-});y) | }_{\hmath }^{2}
 +{| F(s,{u}_{\ast }({s}^{-});y) | }_{\hmath }^{2} \bigr\} \, d\mu (y)ds {\bigr| }^{r} \Bigr] \nonumber \\
& \le {2}^{r} {C}_{2}^{r} {|\varphi|}_{H}^{2r}\, \bar{\e } \Bigl[ \bigl|  \int_{0}^{t} \bigl\{ 2+
  {| {\bar{u}}_{n}(s) |}_{\hmath }^{2} + {|{u}_{\ast }(s)|}_{\hmath }^{2}
\bigr\}  \, ds {\bigr| }^{r} \Bigr] 
 \le c \bigl( 1+ \bar{\e } \bigl[ \sup_{s \in [0,T]} 
 {|{\bar{u}}_{n}(s)| }_{\hmath }^{2r}\bigr] \bigr)
 \nonumber \\
& \le c (1+ {C}_{1}(2r))  
\label{E:Vitali_{noise}_{n}_est}
\end{align} 
for some constant $c>0$.
Thus by \eqref{E:Vitali_{noise}_{n}_conv}, \eqref{E:Vitali_{noise}_{n}_est} and the Vitali Theorem for all $t\in [0,T]$
\begin{eqnarray}
\lim_{n\to \infty } \bar{\e } \Bigl[  \int_{0}^{t} \int_{Y} 
{| \dual{F(s, {\bar{u}}_{n}({s}^{-});y)- F(s,{u}_{\ast }({s}^{-});y)}{\varphi}{}|}^{2} \, d\mu (y)ds  \Bigr] =0,
\quad  \varphi \in \hmath  . \label{E:Lebesgue_{noise}_{n}_conv_vcal}
\end{eqnarray} 
Moreover, since the restriction of $\Pn $ to the space $\hmath $ is the $\ilsk{\cdot }{\cdot }{\hmath }$-projection onto ${\hmath }_{n}$, see Section \ref{S:Funct_anal}, we infer that also
\begin{eqnarray}
\lim_{n\to \infty } \bar{\e } \Bigl[  \int_{0}^{t} \int_{Y} {| \dual{\Pn F(s, {\bar{u}}_{n}({s}^{-});y)- F(s,{u}_{\ast }({s}^{-});y)}{\varphi}{} |}^{2} \, d\mu (y)ds  \Bigr] =0,
\quad  \varphi \in \hmath  . \label{E:Lebesgue_{noise}_{n}_conv_H}
\end{eqnarray}
Since $\umath \subset \hmath $, \eqref{E:Lebesgue_{noise}_{n}_conv_H} holds for $\varphi \in \umath $.

\bigskip \noindent
\bf Ad. (g). \rm 
By \eqref{E:Lebesgue_{noise}_{n}_conv_H} and  by the properties of the integral with respect to the compensated Poisson random measure
and the fact that ${\bar{\eta }}_{n} = {\eta }_{\ast }$, we have
\begin{equation}  \label{E:Lebesque_{noise}_{n}_conv}
\lim_{n\to \infty } \bar{\e } \Bigl[ \Bigl|  \int_{0}^{t} \int_{Y}  \dual{ \Pn F(s, {\bar{u}}_{n}({s}^{-});y)- F(s,{u}_{\ast }({s}^{-});y)}{\varphi}{}  \, {\tilde{{\eta }}}_{\ast } (ds,dy)
{\Bigr| }^{2}  \Bigr]  =0.
\end{equation}
Moreover, by inequality \eqref{E:Vitali_{noise}_{n}_est} with $r:=1$
we obtain the following inequality
\begin{align} 
&  \bar{\e } \Bigl[ \bigl|  \int_{0}^{t} \int_{Y}  \dual{\Pn F(s, {\bar{u}}_{n}({s}^{-});y)- F(s,{u}_{\ast }({s}^{-});y)}{\varphi}{}  \, {\tilde{\eta }}_{\ast } (ds,dy)
{\bigr| }^{2}  \Bigr] \nonumber \\
&  =  \bar{\e } \Bigl[   \int_{0}^{t} \int_{Y} 
 {|\ilsk{\Pn F(s, {\bar{u}}_{n}({s}^{-});y)- F(s,{u}_{\ast }({s}^{-});y)}{\varphi}{\hmath } |}^{2} \, \mu (dy)ds  \Bigr] 
 \le c ( 1+ {C}_{1}(2)) .
 \label{E:Lebesque_{noise}_{n}_est}
\end{align}
Now assertion (g) follows from
 \eqref{E:Lebesque_{noise}_{n}_conv}, \eqref{E:Lebesque_{noise}_{n}_est} and the Dominated Convergence Theorem. 

\bigskip  \noindent
\bf Ad. (h). \rm 
Let us assume  that $\varphi \in \vmath $.
We have
\begin{align*} 
&  \int_{0}^{t}  {\| \dual{G(s, {\bar{u}}_{n}(s))- G(s,{u}_{\ast }(s))}{\varphi}{} 
  \| }^{2}_{\lhs ({Y}_{W};\rzecz )} \, ds  \\
 & = \int_{0}^{t}  {\|  {\tilde{G}}_{\varphi}({\bar{u}}_{n})(s) - {\tilde{G}}_{\varphi}({u}_{\ast })(s)
    \| }^{2}_{\lhs ({Y}_{W};\rzecz )} \, ds 
 \le \norm{{\tilde{G}}_{\varphi}({\bar{u}}_{n}) - {\tilde{G}}_{\varphi}({u}_{\ast })}{{L}^{2}([0,T];\lhs ({Y}_{W};\rzecz )) }{2} ,
 \end{align*}
where   ${\tilde{G}}_{\varphi}$ is the mapping defined by \eqref{E:G**}.
Since by \eqref{E:Z_cadlag_NS_bar_un_conv}
${\bar{u}}_{n} \to {u}_{\ast }$ in ${L}^{2}(0,T;{L}^{2}_{loc}(\ocal )) $, $\bar{\p }$-a.s.,
by the second part of assumption (G.3) we infer that for all $t\in [0,T]$ and all $\varphi \in \vmath $
\begin{align} 
&  \lim_{n \to \infty } \int_{0}^{t} {\| \dual{G(s,{\bar{u}}_{n}(s))- G(s,{u}_{\ast }(s))}{\varphi}{} 
  \| }^{2}_{\lhs ({Y}_{W};\rzecz )} \, ds =0  .
   \label{E:Vitali_{Gaussian}_{n}_conv}
\end{align}
Moreover, by \eqref{E:G*} and \eqref{E:H_estimate_bar_u_n} we see that
for every $t \in [0,T]$ every $r \in \bigl( 1,1+ \frac{\gamma }{2} \bigr] $ and every $n \in \nat $
\begin{align} 
& \bar{\e } \Bigl[  \bigl| \int_{0}^{t} {\| \dual{G(s, {\bar{u}}_{n}(s))- G(s,{u}_{\ast }(s))}{\varphi}{} 
  \| }^{2}_{\lhs ({Y}_{W};\rzecz )} \, ds {\bigr| }^{r} \Bigr]  \nonumber \\
&  \le  c \, \bar{\e } \Bigl[ \norm{\varphi}{\vmath }{2r} \cdot \int_{0}^{t} \bigl\{ 
  \norm{G(s, {\bar{u}}_{n}(s))}{\lhs ({Y}_{W};{\vmath }^{\prime })}{2r} 
  + \norm{G(s, {u}_{\ast }(s))}{\lhs ({Y}_{W};{\vmath }^{\prime })}{2r} \bigr\} \, ds   \Bigr] \nonumber \\
&  \le  {c}_{1} \, \bar{\e } \Bigl[ \int_{0}^{T} (1+ {|{\bar{u}}_{n}(s)|}_{\hmath }^{2r}
    + {|{u}_{\ast }(s)|}_{\hmath }^{2r}) \, ds  \Bigr] \nonumber  \\
&   \le  \tilde{c} \Bigl\{ 1+ \bar{\e } \Bigl[ \sup_{s\in [0,T]} {|{\bar{u}}_{n}(s)|}_{\hmath }^{2r}
    + \sup_{s\in [0,T]} {|{u}_{\ast }(s)|}_{\hmath }^{2r})  \Bigr] \Bigr\} \le \tilde{c} (1+2 {C}_{1}(2r))
 \label{E:Vitali_{Gaussian}_{n}_est}   
\end{align}
for some positive constants $c,{c}_{1},\tilde{c}$.
Thus by \eqref{E:Vitali_{Gaussian}_{n}_conv}, \eqref{E:Vitali_{Gaussian}_{n}_est} and the Vitali 
Theorem, we infer that
\begin{equation}
\lim_{n\to \infty } \bar{\e } \Bigl[  \int_{0}^{t}  
 {\| \dual{G(s, {\bar{u}}_{n}(s))- G(s,{u}_{\ast }(s))}{\varphi}{} \| }^{2}_{\lhs ({Y}_{W};\rzecz )} \, ds
  \Bigr] =0 \quad \mbox{for all } \varphi \in \vmath .  \label{E:Lebesgue_{Gaussian}_{n}_conv_V}
\end{equation}  
For every $\varphi \in \vmath  $ and every $s \in [0,T]$  we have
\begin{align*}
&   \dual{\Pn G(s, {\bar{u}}_{n}(s))- G(s,{u}_{\ast }(s))}{\varphi}{}
 = \dual{ G(s, {\bar{u}}_{n}(s))}{\Pn \varphi}{}
 - \dual{ G(s,{u}_{\ast }(s))}{\varphi}{} \\
&  = \dual{ G(s, {\bar{u}}_{n}(s))}{\Pn \varphi - \varphi}{}
 + \dual{ G(s, {\bar{u}}_{n}(s)) -G(s,{u}_{\ast }(s))}{\varphi}{} \\
&  \le \norm{G(s, {\bar{u}}_{n}(s))}{\lhs ({Y}_{W},{\vmath }^{\prime })}{}  
 \norm{\Pn \varphi -\varphi}{\vmath }{}
 + \dual{ G(s, {\bar{u}}_{n}(s)) -G(s,{u}_{\ast }(s))}{\varphi}{} . 
\end{align*}
Thus by inequality  \eqref{E:G*} in assumption (G.3) and by \eqref{E:H_estimate_bar_u_n} we obtain
\begin{align*}
&   \bar{\e } \Bigl[  \int_{0}^{t}  
 {\| \dual{\Pn G(s, {\bar{u}}_{n}(s))- G(s,{u}_{\ast }(s))}{\varphi}{} \| }^{2}_{\lhs ({Y}_{W};\rzecz )} \, ds  \Bigr] \\
 &  \le 2C \norm{\Pn \varphi -\varphi}{\vmath }{2} \, \bar{\e } \Bigl[  \int_{0}^{T} 
 \bigl( 1+ {|{\bar{u}}_{n}(s)|}_{\hmath }^{2} \bigr) \, ds \Bigr] 
 + 2 \bar{\e } \Bigl[  \int_{0}^{t}  
 {\| \dual{ G(s, {\bar{u}}_{n}(s))- G(s,{u}_{\ast }(s))}{\varphi}{} \| }^{2}_{\lhs ({Y}_{W};\rzecz )} \, ds  \Bigr]  \\
 &  \le 2CT(1+ {C}_{1}(2)) \norm{\Pn \varphi -\varphi}{\vmath }{2} 
 + 2 \bar{\e } \Bigl[  \int_{0}^{t}  
 {\| \dual{ G(s, {\bar{u}}_{n}(s))- G(s,{u}_{\ast }(s))}{\varphi}{} \| }^{2}_{\lhs ({Y}_{W};\rzecz )} \, ds  \Bigr] .
\end{align*}
Since $\umath \subset \vmath  $ and by Lemma \ref{L:P_n|U} (c), $\norm{\Pn \varphi -\varphi}{\vmath }{} \to 0 $ for all $\varphi\in \umath $, by 
\eqref{E:Lebesgue_{Gaussian}_{n}_conv_V} we infer that
\begin{equation*}
 \lim_{n\to \infty } \bar{\e } \Bigl[ \int_{0}^{t}  
 {\| \dual{\Pn G(s, {\bar{u}}_{n}(s))- G(s,{u}_{\ast }(s))}{\varphi}{} \| }^{2}_{\lhs ({Y}_{W};\rzecz )} \, ds  \Bigr]  =0 \quad \mbox{for all } \varphi \in \umath  .
\end{equation*} 
Hence by the properties of the It\^{o} integral we infer that for all $t \in [0,T]$ and all $\varphi \in \umath $
\begin{equation}  \label{E:Lebesque_{Gaussian}_{n}_conv}
\lim_{n\to \infty } \bar{\e } \Bigl[ \bigl| 
 \Dual{\int_{0}^{t}\bigl[ \Pn G(s, {\bar{u}}_{n}(s))- G(s,{u}_{\ast }(s)) \bigr] \, d{W}_{\ast }(s) }{\varphi}{}
 {\bigr| }^{2} \Bigr] =0 .
\end{equation}
Moreover, by the It\^{o} isometry, inequality  \eqref{E:G*} in assumption (G.3), and 
\eqref{E:H_estimate_bar_u_n} we have for all $t\in [0,T]$ and all $n \in \nat $ 
\begin{align} 
&  \bar{\e } \Bigl[ \bigl| 
 \Dual{\int_{0}^{t}\bigl[ \Pn G(s, {\bar{u}}_{n}(s))- G(s,{u}_{\ast }(s)) \bigr] \, d{W}_{\ast }(s) }{\varphi}{}
 {\bigr| }^{2} \Bigr] \nonumber \\
&   = \bar{\e } \Bigl[\int_{0}^{t} {\| \dual{\Pn G(s, {\bar{u}}_{n}(s))- G(s,{u}_{\ast }(s))}{\varphi}{} 
  \| }^{2}_{\lhs ({Y}_{W};\rzecz )} \, ds \Bigr] \nonumber \\
&  \le  c \Bigl\{ 1+ \bar{\e } \Bigl[ \sup_{s\in [0,T]}  {|{\bar{u}}_{n}(s)|}_{\hmath }^{2}
    + \sup_{s\in [0,T]} {|{u}_{\ast }(s)|}_{\hmath }^{2})  \Bigr] \Bigr\} \le c (1+2 {C}_{1}(2)) 
 \label{E:Lebesque_{Gaussian}_{n}_est}  
\end{align}
for some $c>0$.
By \eqref{E:Lebesque_{Gaussian}_{n}_conv}, \eqref{E:Lebesque_{Gaussian}_{n}_est} and the Dominated Convergence Theorem we infer that
\begin{eqnarray}
 \lim_{n\to \infty } \int_{0}^{T}\bar{\e } \Bigl[ \bigl| 
 \Dual{\int_{0}^{t}\bigl[ \Pn G(s, {\bar{u}}_{n}(s))- G(s,{u}_{\ast }(s)) \bigr] 
 \, d{W}_{\ast }(s) }{\varphi}{}
 {\bigr| }^{2} \Bigr] =0.  \label{E:{Gaussian}_{n}_conv}
\end{eqnarray}
This completes the proof of Lemma \ref{L:convergence_existence}.
\end{proof} 

\bigskip  \noindent
As a direct consequence of  Lemma \ref{L:convergence_existence} we get the following corollary

\bigskip 
\begin{cor}
We have 
\begin{equation} \label{E:bar_u_n_convergence}
 \lim_{n \to \infty } \norm{\ilsk{{\bar{u}}_{n}(\cdot )}{\varphi}{\hmath } 
 - \ilsk{{u}_{\ast }(\cdot )}{\varphi}{\hmath } }{{L}^{2}([0,T]\times \bar{\Omega })}{} =0 
\end{equation}
and
\begin{equation} \label{E:K_n_bar_u_n_convergence}
     \lim_{n \to \infty } \norm{  {\Lambda  }_{n}({\bar{u}}_{n}, {\bar{\eta }}_{n},{\bar{W}}_{n},\varphi)
 - \Lambda  ({u}_{\ast }, {\eta }_{\ast },{W}_{\ast }, \varphi) }{{L}^{1}([0,T]\times \bar{\Omega })}{} =0 .
\end{equation}
\end{cor}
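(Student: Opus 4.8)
The plan is to deduce the corollary directly from Lemma \ref{L:convergence_existence}, since each of the two asserted limits is, after an elementary majorisation, one of its items. For the first identity \eqref{E:bar_u_n_convergence} I would simply observe that
\[
\norm{\ilsk{{\bar{u}}_{n}(\cdot )}{\varphi}{\hmath } - \ilsk{{u}_{\ast }(\cdot )}{\varphi}{\hmath } }{{L}^{2}([0,T]\times \bar{\Omega })}{2} = \bar{\e }\Bigl[ \int_{0}^{T} {\bigl| \ilsk{{\bar{u}}_{n}(t)-{u}_{\ast }(t)}{\varphi}{\hmath } \bigr| }^{2}\, dt \Bigr],
\]
which is exactly the quantity shown to vanish in part (a); taking square roots yields \eqref{E:bar_u_n_convergence}.

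For the second identity I would first expand the difference ${\Lambda }_{n}({\bar{u}}_{n}, {\bar{\eta }}_{n},{\bar{W}}_{n},\varphi) - \Lambda ({u}_{\ast }, {\eta }_{\ast },{W}_{\ast }, \varphi)$ term by term from the definitions \eqref{E:K_n_bar_u_n} and \eqref{E:K_u*}, obtaining a finite sum of eight summands: the initial-value term, the four integrals against $\acal$, $\bcal$, $\rcal$ and the deterministic force $f$, the $\mu$-drift over $Y\setminus {Y}_{0}$, the compensated Poisson integral (here I use ${\bar{\eta }}_{n}={\eta }_{\ast }$ from item (iii)), and the Gaussian stochastic integral. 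By the triangle inequality in ${L}^{1}([0,T]\times \bar{\Omega })$ it then suffices to show that each summand tends to $0$ in that norm. The summands coming from $\acal$ and $\bcal$ are precisely parts (c) and (d) and need nothing further. For the initial value (b), the term in $\rcal$ (e), the compensated Poisson integral (g) and the Gaussian integral (h) the lemma provides convergence of the corresponding ${L}^{2}$-type quantities, so I would pass from ${L}^{2}$ to ${L}^{1}$ by the Cauchy--Schwarz inequality on the finite measure space $[0,T]\times \bar{\Omega }$ (of total measure $T$).

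Two summands are not literal items of the lemma and deserve a line each. The deterministic force contributes $\int_{0}^{t}\dual{f(s)}{\Pn \varphi -\varphi }{}\, ds$ after the self-adjointness relation \eqref{E:tP_n-P_n}, and since $f \in {L}^{2}(0,T;{\vmath }^{\prime })$ and $\Pn \varphi \to \varphi $ in $\vmath $ by Lemma \ref{L:P_n|U}(c), this term is deterministic and vanishes. The $\mu$-drift over $Y\setminus {Y}_{0}$ I would dominate, via Cauchy--Schwarz in $(s,y)$ together with the finiteness $\mu (Y\setminus {Y}_{0})<\infty $, by $\bigl( T\mu (Y\setminus {Y}_{0})\bigr)^{1/2}$ times the square root of the $Y$-integral estimate \eqref{E:Lebesgue_{noise}_{n}_conv_H} already established inside the proof of (f)--(g); this is the one point where the hypothesis $\mu (Y\setminus {Y}_{0})<\infty $ is used.

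The argument is routine once Lemma \ref{L:convergence_existence} is in hand, and I expect no genuine analytic obstacle. The only points requiring care are organisational: correctly matching the eight summands of ${\Lambda }_{n}-\Lambda $ to items (b)--(h), remembering that the deterministic force term must be treated separately, that the drift over $Y\setminus {Y}_{0}$ is handled through \eqref{E:Lebesgue_{noise}_{n}_conv_H} rather than through (f) verbatim, and keeping track of the passage from the ${L}^{2}$-type statements of the lemma to ${L}^{1}([0,T]\times \bar{\Omega })$ convergence on the finite measure space.
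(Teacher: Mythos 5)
Your proof is correct and takes essentially the same route as the paper: \eqref{E:bar_u_n_convergence} is exactly Lemma \ref{L:convergence_existence}(a) rewritten via Fubini, and \eqref{E:K_n_bar_u_n_convergence} follows by matching the summands of ${\Lambda }_{n}-\Lambda $ with the items of that lemma and passing from $L^{2}$ to $L^{1}$ on the finite measure space $[0,T]\times \bar{\Omega }$. Your write-up is in fact slightly more complete than the paper's one-line argument, since you explicitly dispose of the two summands the lemma does not literally cover: the deterministic force term (via \eqref{E:tP_n-P_n} and Lemma \ref{L:P_n|U}(c)) and the $\mu $-drift over $Y\setminus {Y}_{0}$ (via the inner $L^{2}(d\mu \, ds)$ estimate from the proof of parts (f)--(g) together with $\mu (Y\setminus {Y}_{0})<\infty $), both of which the paper silently subsumes under ``each term tends to the corresponding term.''
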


\bigskip  
\begin{proof}
Assertion  (\ref{E:bar_u_n_convergence}) follows form the  equality 
\[
 \norm{\ilsk{{\bar{u}}_{n}(\cdot )}{\varphi}{\hmath } 
 - \ilsk{{u}_{\ast }(\cdot )}{\varphi}{\hmath } }{{L}^{2}([0,T]\times \bar{\Omega })}{2} \\
=  \bar{\e } \Bigl[ \int_{0}^{T} 
{| \ilsk{{\bar{u}}_{n}(t) -{u}_{\ast }(t)}{\varphi}{\hmath } |}^{2} \, dt \Bigr] 
\]
and Lemma \ref{L:convergence_existence}  (a).
Let us move to  \eqref{E:K_n_bar_u_n_convergence}.
Note that by the Fubini Theorem, we have
\begin{align*}
&  \norm{  {\Lambda  }_{n}({\bar{u}}_{n}, {\bar{\eta }}_{n},{\bar{W}}_{n},\varphi)
 - \Lambda  ({u}_{\ast }, {\eta }_{\ast },{W}_{\ast }, \varphi) }{{L}^{1}([0,T]\times \bar{\Omega })}{} \\
& = \int_{0}^{T} \bar{\e } \bigl[ {| {\Lambda  }_{n}({\bar{u}}_{n}, {\bar{\eta }}_{n},{\bar{W}}_{n},\varphi)(t)
 - \Lambda  ({u}_{\ast }, {\eta }_{\ast },{W}_{\ast } ,\varphi)(t) |}^{}\, \bigr] dt .
 \end{align*}
To prove  \eqref{E:K_n_bar_u_n_convergence} it is sufficient to note that 
by Lemma \ref{L:convergence_existence} (b)-(g),  each  term on the right hand side of
\eqref{E:K_n_bar_u_n} tends at least in ${L}^{1}([0,T]$ $\times \bar{\Omega })$ to the corresponding term in \eqref{E:K_u*}. 
\end{proof}
\noindent

\bigskip  \noindent
\bf Step 2. \rm Since ${u}_{n}$ is a solution of the Galerkin equation, for all $t\in [0,T]$
\[
   \ilsk{{u}_{n}(t)}{\varphi}{\hmath } = {\Lambda  }_{n} ({u}_{n},{\eta }_{n},{W}_{n},\varphi)(t) , \qquad \p \mbox{-a.s.}
\]
In particular,
\[
  \int_{0}^{T} \e \bigl[ {|  \ilsk{\un (t)}{\varphi}{\hmath } 
 - {\Lambda  }_{n} ({u}_{n},{\eta }_{n},{W}_{n},\varphi)(t) |}^{} \, \bigr] \, dt  =0.
\]
Since $\lcal ({u}_{n},{\eta }_{n},{W}_{n}) = \lcal ({\bar{u}}_{n},{\bar{\eta }}_{n},{\bar{W}}_{n})$,
\[
  \int_{0}^{T} \bar{\e } \bigl[ {| \ilsk{\bun{t}}{\varphi}{\hmath } 
- {\Lambda  }_{n} ({\bar{u}}_{n},{\bar{\eta } }_{n},{\bar{W}}_{n},\varphi)(t) |}^{} \, \bigr] \, dt  =0.
\]
Moreover, by \eqref{E:bar_u_n_convergence} and \eqref{E:K_n_bar_u_n_convergence}
\[
\int_{0}^{T} \bar{\e } \bigl[ {| 
\ilsk{{u}_{\ast }(t)}{\varphi}{\hmath } - {\Lambda  }_{} ({u}_{\ast },{\eta }_{\ast },{W}_{\ast },\varphi)(t) |}^{} \, \bigr] \, dt  =0.
\]
Hence for $l$-almost all $t \in [0,T]$ and $\bar{\p }$-almost all $\omega \in \bar{\Omega }$
\[
\ilsk{{u}_{\ast }(t)}{\varphi}{\hmath } - {\Lambda  }_{} ({u}_{\ast},{\eta }_{\ast },{W}_{\ast },\varphi)(t)=0, 
\]
i.e. for $l$-almost all $t \in [0,T]$ and $\bar{\p }$-almost all $\omega \in \bar{\Omega }$
\begin{align}
& \ilsk{{u}_{\ast }(t)}{\varphi}{\hmath } 
+\int_{0}^{t} \dual{\acal {u}_{\ast }(s)}{\varphi}{} \, ds
+ \int_{0}^{t} \dual{B({u}_{\ast }(s))}{\varphi}{} \, ds 
+\int_{0}^{t} \dual{\rcal {u}_{\ast }(s)}{\varphi}{} \, ds 
  \nonumber \\
 & =  \ilsk{{u}_{\ast }(0)}{\varphi}{\hmath }
+\int_{0}^{t} \dual{f(s)}{\varphi}{}\, ds  
 + \int_{0}^{t} \int_{{Y}_{0}} \ilsk{F(s,{u}_{\ast }(s);y)}{\varphi}{\hmath } \,  {\tilde{\eta }}_{\ast } (ds,dy) \nonumber \\
& + \int_{0}^{t} \int_{Y\setminus {Y}_{0}} \ilsk{F(s,{u}_{\ast }(s);y)}{\varphi}{\hmath } \, {\eta }_{\ast } (ds,dy) 
 + \Dual{ \int_{0}^{t} G(s,{u}_{\ast }(s)) \, d {W}_{\ast }(s) }{\varphi}{} . \label{E:solution}
\end{align}
Since ${u}_{\ast }$ is $\zcal $-valued random variable, in particular ${u}_{\ast }\in \dmath ([0,T];{\hmath }_{w})$, i.e. ${u}_{*}$ is weakly c\`{a}dl\`{a}g, 
we infer that 
equality \eqref{E:solution} holds  for all $t\in [0,T]$ and all $\varphi \in \umath $.
Since $\umath $ is dense in $\vmath $,   equality \eqref{E:solution} holds for all $\varphi \in \vmath $, as well.
Putting $\bar{\mathfrak{A}}:=(\bar{\Omega }, \bar{\fcal },\bar{\p }, \bar{\fmath })$,
 $\bar{\eta }:= {\eta }_{\ast }$, $\bar{W}:= {W}_{\ast }$ and $\bar{u}:= {u}_{\ast }$, we infer that the system 
$(\bar{\mathfrak{A}}, \bar{\eta },\bar{W}, \bar{u})$ is a martingale solution of the equation 
\eqref{E:equation}. 
By \eqref{E:V_estimate_u_ast} and \eqref{E:H_estimate_u_ast} the process $\bar{u}$ satisfies inequality
\eqref{E:u_estimates}.
The proof of Theorem \ref{T:existence} is thus complete.
\qed

\pagebreak

\section{Applications} \label{S:Applications}

\noindent
In this section $\ocal $ is an open connected possibly unbounded subset of $\rd $, $d=2,3$, with smooth boundary $\partial \ocal $.

\subsection{\bf Stochastic Navier-Stokes equations\rm }

\noindent
Let us consider the stochastic Navier-Stokes equations
\begin{equation} \label{E:NS}
\begin{split}
   & du(t) + (u \cdot \nabla ) u 
  - \frac{1}{Re} \Delta u  + \nabla p =  {f}_{}(t)  
    + \int_{{Y}_{0}} {F}_{}(t,u({t}^{-});y) {\tilde{\eta }}_{} (dt,dy),  \\
    & \qquad  + \int_{Y \setminus {Y}_{0}} {F}_{}(t,u({t}^{-});y) {\eta }_{} (dt,dy)
  +  {G}_{}(t,u(t)) \, d {W}_{}(t) \\
  & u(0) = {u}_{0}
\end{split}  
\end{equation}
in  $[0,T] \times \ocal $, with the incompressibility condition 
\begin{equation} \label{E:NS_incompressibility}
  \diver u =0 , 
\end{equation}
the homogeneous boundary condition 
\begin{equation} \label{E:NS_boundary}
{u}_{| \partial \ocal }=0.
\end{equation}
and the initial condition $u(0)={u}_{0}$.
In this problem $u=u(t,x)=({u}_{1}(t,x), ...{u}_{d}(t,x))$ and $p=p(t,x)$ 
represent the velocity and the pressure of the fluid. In equation \eqref{E:NS} $Re >0$
is the Reynolds number related to the kinematic viscosity (we may put $Re:=1$).
Furthermore, ${f}_{}$ stands for the deterministic
external forces, ${G}_{}(t,u)\, d {W}_{}(t)$, where ${W}_{}$ is a cylindrical  Wiener process on a Hilbert space ${Y}_{W}^{}$,
$ \int_{{Y}_{0}} {F}_{}(t,u({t}^{-});y) {\tilde{\eta }}_{} (dt,dy) $
and $ \int_{Y \setminus {Y}_{0}} {F}_{}(t,u({t}^{-});y) {\eta }_{} (dt,dy) $, where ${\eta }_{}$ is a time-homogeneous Poisson random measure on a measurable space $({Y}_{},{\ycal }_{})$ and 
${Y}_{0} \in \ycal $ is such that $\mu (Y \setminus {Y}_{0})<\infty $, 
stands for the random forces. Processes ${W}_{}$ and ${\eta }_{}$ are assumed to be independent.
Let us recall the functional setting of the problem 
\eqref{E:NS}-\eqref{E:NS_boundary}, see e.g. Temam \cite{Temam79}.

\bigskip  \noindent
\bf Function spaces. \rm 
Let us recall basic spaces used in the theory of Navier-Stokes equations.
We will denote them with subscript $0$.
\begin{align}
 &  {\vcal }_{0}:= {\ccal }^{\infty }_{c} (\ocal , \rd ) \cap \{ \diver = 0 \}  ,\label{vcal_0}\\
 &  {H}_{0} := \text{the closure of ${\vcal }_{0}$ in ${L}^{2}(\ocal , \rd )$} ,   \label{H_0} \\
 &  {V}_{0} := \text{the closure of ${\vcal }_{0}$ in ${H}^{1}(\ocal , \rd )$} .    \label{V_0}
\end{align} 
In the space ${H}_{0}$ we consider the inner product and the norm inherited from ${L}^{2}(\ocal , \rd )$ and 
denote them by $\ilsk{\cdot }{\cdot }{0}$ and $|\cdot {|}_{0}$, respectively, i.e.
\begin{equation} \label{E:il_sk_H_0}
\ilsk{u}{v}{0}:= \ilsk{u}{v}{{L}^{2}} , \qquad
 {|u|}_{0} := \norm{u}{{L}^{2}}{} , \qquad u, v \in {H}_{0}.
\end{equation}
In the space ${V}_{0}$ we consider the inner product 
\[
    \ilsk{u}{v}{{V}_{0}}:= \ilsk{u}{v}{0} +  \dirilsk{u}{v}{0},
\]
where
\begin{equation} \label{E:il_sk_Dirichlet}
    \dirilsk{u}{v}{0} := \frac{1}{Re}  \ilsk{\nabla u}{\nabla v}{{L}^{2}}
   =\frac{1}{Re}  \sum_{i=1}^{d}\int_{\ocal }\frac{\partial u}{\partial {x}_{i}} \cdot \frac{\partial v}{\partial {x}_{i}} \, dx , 
   \quad u,v \in  {V}_{0}   
\end{equation}
and the norm 
$
  \norm{u}{{V}_{0}}{2} := \dirilsk{u}{u}{{V}_{0}} = \norm{u}{{L}^{2}}{2} 
  + \frac{1}{Re}  \norm{\nabla u}{{L}^{2}}{2}.
$

\bigskip  \noindent
\bf The operator ${\acal }_{0}$. \rm 
We define the operator ${\acal }_{0}:{V}_{0} \to {V}_{0}'$ by setting
\begin{equation}  \label{E:Acal_0}
    \dual{{\acal }_{0}u}{v}{} =   \dirilsk{u}{v}{0} , \qquad u,v \in {V}_{0}.
\end{equation}
\bf The form $b$\rm .
Let us consider the following tri-linear form, see \cite{Temam79}.
\begin{equation}  \label{E:form_b}
     b(u,w,v ) = \int_{\ocal  }\bigl( u \cdot \nabla w \bigr) v \, dx 
   = \sum_{i=1}^{d} \int_{\ocal } {u}_{i} \frac{\partial w}{\partial {x}_{i}} \, v \, dx ,
\end{equation}
where $u:\ocal \to \rd $, $w,v: \ocal \to {\rzecz }^{{d}_{1}}$ and ${d}_{1}\in \nat $.
(We will consider the cases when ${d}_{1}=d$ or ${d}_{1}=1$.)
We  recall basic properties of the form $b$ in some Sobolev spaces. We will use them also in the magneto-hydrodynamic equations and in the Boussinesq equations.
Using the H\"{o}lder inequality and the Sobolev embedding Theorem, see \cite{Adams},
we obtain the following inequalities
\begin{align} 
    |b(u,w,v )| 
 &\le {\| u\| }_{{L}^{4}} \norm{\nabla w }{{L}^{2}}{} {\| v\| }_{{L}^{4}}  \notag 
 \notag  \\
 &    \le c \norm{u }{{H}^{1}}{} \norm{w }{{H}^{1}}{} \norm{v }{{H}^{1}}{} , 
\qquad u \in {H}^{1}(\ocal , \rd ), \quad v,w \in {H}^{1}(\ocal ;{\rzecz }^{{d}_{1}}) ,  
\label{E:b_estimate_H^1}
\end{align}
where $c$ is a positive constant.
Thus, the form $b$ is continuous on ${H}^{1}(\ocal ;\rd )\times {H}^{1}(\ocal ;{\rzecz }^{{d}_{1}})\times {H}^{1}(\ocal ;{\rzecz }^{{d}_{1}})$, see  Temam \cite{Temam95} and \cite{Temam79}.
If we define a bilinear map $B$ by $B(u,w):=b(u,w,\cdot )$, then by \eqref{E:b_estimate_H^1}, we infer that
$B(u,w) \in {H}^{-1}(\ocal ;{\rzecz }^{{d}_{1}})$ and
\begin{equation}
 {|B(u,w)|}_{{H}^{-1}} \le c \norm{u }{{H}^{1}}{} \norm{w }{{H}^{1}}{},
 \qquad u \in {H}^{1}(\ocal , \rd ) , \quad w \in {H}^{1}(\ocal ;{\rzecz }^{{d}_{1}}).
 \label{E:B_estimate_H^1}
\end{equation} 
(The symbol ${H}^{-1}(\ocal ;{\rzecz }^{{d}_{1}})$ stands for the dual space of 
${H}^{1}(\ocal ;{\rzecz }^{{d}_{1}})$.)
Furthermore, if $\diver u =0 $, then
\begin{align*}
 \sum_{i=1}^{n} \frac{\partial }{\partial {x}_{i}} ({u}_{i}w )
 & = \biggl( \sum_{i=1}^{n} \frac{\partial {u}_{i}}{\partial {x}_{i}} \biggr) w
      + \sum_{i=1}^{n} {u}_{i} \frac{\partial w}{\partial {x}_{i}}
   = (\diver u ) w + u \cdot \nabla w =  u \cdot \nabla w  .
\end{align*}
If moreover $(u \cdot n {)}_{|\partial \ocal } =0 $, then by the integration by parts formula, see \cite{Temam79},
\begin{align*}
b(u,w, v ) &= \int_{\ocal } \bigl(  u \cdot \nabla w \bigr) v \, dx
=  \sum_{i=1}^{n} \int_{\ocal } \frac{\partial }{\partial {x}_{i}} ({u}_{i}w )  v \, dx
= - \sum_{i=1}^{n} \int_{\ocal } {u}_{i} w \frac{\partial v}{\partial {x}_{i}} \, dx  \\
&= - \int_{\ocal } \biggl( \sum_{i=1}^{n} {u}_{i} \frac{\partial v}{\partial {x}_{i}}  \biggr) w \, dx
= - \int_{\ocal } \bigl( u \cdot \nabla v   \bigr) w \, dx
= - b(u,v ,w).
\end{align*}
Thus for $u \in {H}^{1}(\ocal , \rd )$ such that $\diver u =0 $ and $(u \cdot n {)}_{|\partial \ocal } =0 $ we have 
\begin{align}  \label{E:antisymmetry_b_H^1}
b(u,w, v ) &=  - b(u,v ,w) , \qquad v,w \in {H}^{1}(\ocal ;{\rzecz }^{{d}_{1}}) . 
\end{align}
In particular,
\begin{align}  \label{E:wirowosc_b_H^1}
b(u,v , v ) =0 , \qquad v \in {H}^{1}(\ocal ;{\rzecz }^{{d}_{1}}).
\end{align}
Hence
\begin{align}  \label{E:antisymmetry_B_H^1}
\dual{B(u,w)}{v}{} &=  - \dual{B(u,v)}{w}{} , \qquad v,w \in {H}^{1}(\ocal ;{\rzecz }^{{d}_{1}})  
\end{align}
and, in particular,
\begin{align}  \label{E:wirowosc_B_H^1}
\dual{B(u,v)}{v}{}  =0 , \qquad v \in {H}^{1}(\ocal ;{\rzecz }^{{d}_{1}}).
\end{align}

\bigskip  \noindent 
Let $m > \frac{d}{2} +1$.  By the Sobolev embedding Theorem, see \cite{Adams}, we have 
$
    {H}^{m-1}(\ocal ,{\rzecz }^{{d}_{1}} ) 
   \hookrightarrow {L}^{\infty } (\ocal , {\rzecz }^{{d}_{1}} ).
$
Thus if $u \in {H}^{1}(\ocal , \rd )$, $\diver u =0 $ and $(u \cdot n {)}_{|\partial \ocal } =0 $,
$w \in {H}^{1}(\ocal ;{\rzecz }^{{d}_{1}})$
and $v \in {H}^{m} (\ocal  ,{\rzecz }^{{d}_{1}}) $, then
\begin{align}
& |b(u,w,v)|  = |b(u,v,w)| 
 = \biggl| \sum_{i=1}^{n} \int_{\ocal } {u}_{i} w \frac{\partial v}{\partial {x}_{i}} \, dx \biggr|  
  \notag \\
& \le  \norm{u}{{L}^{2}}{} \norm{w}{{L}^{2}}{} \norm{\nabla v}{{L}^{\infty }}{}
 \le  {c}_{} \norm{u}{{L}^{2}}{} \norm{w}{{L}^{2}}{} \norm{v}{{H}^{m}}{},  \label{E:b_estimate_H^m}
\end{align}
where  ${c}_{} >0 $ is a certain constant. 
Hence the operator $B$ can be uniquely  extended to the tri-linear form, denoted still by $B$,
\[
   B: {H}_{0} \times {L}^{2}(\ocal ;{\rzecz }^{{d}_{1}}) \to {H}^{-m} (\ocal ;{\rzecz }^{{d}_{1}}) ,
\]
where ${H}_{0}$ is the space of ${L}^{2}(\ocal ,\rd )$ defined by \eqref{H_0},
and the following inequality holds
\begin{equation}
   {B(u,w)}_{{H}^{-m}} \le {c}_{} \norm{u}{{L}^{2}}{} \norm{w}{{L}^{2}}{} ,
   \qquad u \in {H}_{0} , \quad w \in {L}^{2}(\ocal ;{\rzecz }^{{d}_{1}}),
   \label{E:B_estimate_H^m}
\end{equation}
see e.g. Vishik and Fursikov \cite{Vishik_Fursikov_88}.

\bigskip  \noindent
In the following lemma we prove the property of the map $B$ related to assumption (B.5) in the abstract framework.

\bigskip
\begin{lemma} \label{L:B_conv_aux}
Let $u\in {L}^{2}(0,T;{H}_{0})$ and let ${({u}_{n})}_{n}$ be a bounded sequence in ${L}^{2}(0,T;{H}_{0})$ such that ${u}_{n} \to u $ in ${L}^{2}(0,T;{L}^{2}_{loc}(\ocal ;\rd ))$. 
Let $w \in {L}^{2}(0,T;{L}^{2}(\ocal ;{\rzecz }^{{d}_{1}}))$ and let 
${({w}_{n})}_{n}$ be a bounded sequence in ${L}^{2}(0,T;{L}^{2}(\ocal ;{\rzecz }^{{d}_{1}}))$ such that ${u}_{n} \to u $ in ${L}^{2}(0,T;{L}^{2}_{loc}(\ocal ;{\rzecz }^{{d}_{1}} ))$.
If $m > \frac{d}{2}+1$, then for all $t \in [0,T]$ and all $\varphi  \in {H}^{m}_{0}(\ocal ;{\rzecz }^{{d}_{1}})$:
\[
   \lim_{n \to \infty } \int_{0}^{t} \dual{B({u}_{n}(s),{w}_{n}(s))}{\varphi }{} \, ds =
   \int_{0}^{t} \dual{B(u(s),w(s))}{\varphi }{} \, ds . 
\] 
\end{lemma}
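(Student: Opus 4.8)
The plan is to exploit the antisymmetry of the form $b$, recorded in \eqref{E:antisymmetry_b_H^1}, in order to transfer the spatial derivative off the factor $w_n$ (which is only bounded in $L^2$) onto the smooth test function $\varphi $. Since ${u}_{n}, u \in {H}_{0}$ are divergence free with vanishing normal trace, the extended form $B: {H}_{0}\times {L}^{2}\to {H}^{-m}$ acts, by \eqref{E:B_estimate_H^m} and the computation preceding \eqref{E:B_estimate_H^m}, through the antisymmetrized expression
\[
   \dual{B({u}_{n}(s),{w}_{n}(s))}{\varphi}{} = b({u}_{n}(s),{w}_{n}(s),\varphi) = - b({u}_{n}(s),\varphi ,{w}_{n}(s)) = - \int_{\ocal } \bigl( {u}_{n}(s)\cdot \nabla \varphi \bigr) {w}_{n}(s) \, dx ,
\]
and likewise for $u, w$. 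Thus it suffices to prove $\int_{0}^{t} b({u}_{n}(s),\varphi ,{w}_{n}(s))\, ds \to \int_{0}^{t}b(u(s),\varphi ,w(s))\, ds$. The gain is that $\nabla \varphi $ is now the only differentiated factor, and since $m-1 > \frac{d}{2}$ the Sobolev embedding ${H}^{m-1}(\ocal ;{\rzecz }^{{d}_{1}})\hookrightarrow {L}^{\infty }(\ocal ;{\rzecz }^{{d}_{1}})$ ensures $\nabla \varphi \in {L}^{\infty }$.

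Next I would decompose the difference by inserting a mixed term,
\[
 b({u}_{n},\varphi ,{w}_{n}) - b(u,\varphi ,w) = b({u}_{n}-u,\varphi ,{w}_{n}) + b(u,\varphi ,{w}_{n}-w) ,
\]
so that it is enough to show that $\int_{0}^{t} b({u}_{n}(s)-u(s),\varphi ,{w}_{n}(s))\, ds$ and $\int_{0}^{t} b(u(s),\varphi ,{w}_{n}(s)-w(s))\, ds$ both tend to $0$. These two terms are estimated in exactly the same fashion, so I would treat only the first. For a fixed $R \in \nat $ I split the spatial integral over $\ocal = {\ocal }_{R}\cup (\ocal \setminus {\ocal }_{R})$. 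On ${\ocal }_{R}$, bounding $|\nabla \varphi |$ by $\norm{\nabla \varphi }{{L}^{\infty }}{}$ and applying the Cauchy-Schwarz inequality in $x$ and then in $s$ gives the bound
\[
 \norm{\nabla \varphi }{{L}^{\infty }}{}\Bigl( \int_{0}^{t}\norm{{u}_{n}(s)-u(s)}{{L}^{2}({\ocal }_{R})}{2}\, ds\Bigr)^{\! \frac{1}{2}}\Bigl( \int_{0}^{t}\norm{{w}_{n}(s)}{{L}^{2}}{2}\, ds\Bigr)^{\! \frac{1}{2}},
\]
whose first factor tends to $0$ by the assumed ${L}^{2}(0,T;{L}^{2}_{loc}(\ocal ))$-convergence of $({u}_{n})$, the second staying bounded since $({w}_{n})$ is bounded in ${L}^{2}(0,T;{L}^{2}(\ocal ;{\rzecz }^{{d}_{1}}))$. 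On the tail $\ocal \setminus {\ocal }_{R}$ the same estimates yield a bound of the form $\norm{\nabla \varphi }{{L}^{\infty }(\ocal \setminus {\ocal }_{R})}{}\cdot C$, where $C$ collects the uniform ${L}^{2}(0,T;{L}^{2})$ bounds on $({u}_{n})$, $u$ and $({w}_{n})$.

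The order of quantifiers is therefore the decisive point: given $\eps >0$, I first choose $R$ so large that $\norm{\nabla \varphi }{{L}^{\infty }(\ocal \setminus {\ocal }_{R})}{}$ is small, and only then let $n\to \infty $ for that fixed $R$. The main obstacle is exactly this tail control on the unbounded domain, and what makes it work is that $\nabla \varphi $ vanishes at infinity in the sup norm. I would justify this using $\varphi \in {H}^{m}_{0}(\ocal ;{\rzecz }^{{d}_{1}})$: approximating $\varphi $ in the ${H}^{m}$-norm by functions in ${\ccal }^{\infty }_{c}(\ocal ;{\rzecz }^{{d}_{1}})$ and passing to gradients via ${H}^{m-1}\hookrightarrow {L}^{\infty }$ shows $\norm{\nabla \varphi }{{L}^{\infty }(\ocal \setminus {\ocal }_{R})}{}\to 0$ as $R\to \infty $. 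This is precisely where the hypotheses $m > \frac{d}{2}+1$ and $\varphi \in {H}^{m}_{0}$ are used, and it is the feature that separates the unbounded-domain case from the classical bounded one, where compactness of the Sobolev embedding would make the argument routine.
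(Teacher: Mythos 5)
Your proof is correct, and its core coincides with the paper's: both use the antisymmetrized representation $\dual{B(u,w)}{\varphi }{}=-\int_{\ocal }(u\cdot \nabla \varphi )\,w\,dx$ for the extended operator, the same mixed-term decomposition $B({u}_{n},{w}_{n})-B(u,w)=B({u}_{n}-u,{w}_{n})+B(u,{w}_{n}-w)$, H\"{o}lder plus the embedding ${H}^{m-1}\hookrightarrow {L}^{\infty }$, and the local ${L}^{2}(0,T;{L}^{2}_{loc})$-convergence on ${\ocal }_{R}$. Where you diverge is in how the non-compactly-supported test function is handled. The paper first proves the convergence for $\varphi \in \dcal (\ocal ;{\rzecz }^{{d}_{1}})$ (whose support sits inside some ${\ocal }_{R}$, so no tail ever appears), and then passes to general $\varphi \in {H}^{m}_{0}$ by density, controlling the error $\dual{B({u}_{n},{w}_{n})-B(u,w)}{\varphi -{\varphi }_{\eps }}{}$ via the uniform operator bound $|B(u,w){|}_{{H}^{-m}}\le c\norm{u}{{L}^{2}}{}\norm{w}{{L}^{2}}{}$ of \eqref{E:B_estimate_H^m} together with the boundedness of the sequences, so that $\limsup_{n}|\cdots |\le M\eps $. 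You instead work with general $\varphi $ from the start, split the spatial integral into ${\ocal }_{R}$ and its complement, and control the tail by $\norm{\nabla \varphi }{{L}^{\infty }(\ocal \setminus {\ocal }_{R})}{}\to 0$, which you prove by the same density of $\dcal (\ocal )$ in ${H}^{m}_{0}$ combined with ${H}^{m-1}\hookrightarrow {L}^{\infty }$; your quantifier order ($\eps $, then $R$, then $n$) is the right one and the tail bounds go through since the ${H}_{0}$-norm is the ${L}^{2}$-norm. The two arguments are thus reorganizations of the same approximation: the paper's stays at the operator-norm level and never needs a pointwise decay statement, while yours makes explicit the geometric mechanism (vanishing of $\nabla \varphi $ at infinity) that distinguishes the unbounded-domain case; both genuinely require $\varphi \in {H}^{m}_{0}$ rather than merely ${H}^{m}$, exactly because density of compactly supported functions is the engine in either version. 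The only blemish is notational: the intermediate expression $b({u}_{n}(s),{w}_{n}(s),\varphi )$ is not literally defined for ${w}_{n}(s)\in {L}^{2}$, but since you immediately pass to the antisymmetrized integral, which is the correct meaning of the extension, this is harmless.
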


\bigskip
\begin{proof}
Assume first that $\varphi  \in \dcal (\ocal ,{\rzecz }^{{d}_{1}}) $. Then there exists $R>0$ such that $\supp \varphi $ is a compact subset of ${\ocal }_{R}$.
Then, using the integration by parts formula, we infer that for every $u \in {H}_{0} $ and $w \in {L}^{2}(\ocal ,{\rzecz }^{{d}_{1}})$
\begin{align} 
&  | \dual{B(u,w)}{\varphi  }{} | = \Bigl| \int_{{\ocal }_{R}} ( u  \cdot \nabla \varphi  ) w \, dx \Bigr| 
  \nonumber \\
&  \le \norm{u}{{L}^{2}({\ocal }_{R})}{}  \norm{w}{{L}^{2}({\ocal }_{R})}{}
  \norm{\nabla \varphi  }{{L}^{\infty }({\ocal }_{R})}{}  
  \le C \norm{u}{{L}^{2}({\ocal }_{R})}{}  \norm{w}{{L}^{2}({\ocal }_{R})}{}
\norm{\varphi  }{{H}^{m}}{} .
 \label{E:estimate_B(O_R)_ext}   
\end{align}
We have
$ B(\un , \wn ) - B(u,w) =  B(\un -u , \wn ) + B(u,\wn -w) $.
Using inequality \eqref{E:estimate_B(O_R)_ext} and the H\"{o}lder inequality, we obtain
\begin{align*}
 & \Bigl| \int_{0}^{t} \dual{ B \bigl( \un (s) ,\wn (s) \bigr) }{\varphi  }{} \, ds 
- \int_{0}^{t}  \dual{ B \bigl( u(s), w(s)\bigr)  }{\varphi }{} \, ds \Bigr|  \\
 &\le \Bigl|  \int_{0}^{t} \dual{ B \bigl( \un (s) - u(s) ,\wn (s) \bigr)  }{\varphi }{} \, ds  \Bigr|
  + \Bigl| \int_{0}^{t}  \dual{ B \bigl( u (s) ,\wn (s) - w(s) \bigr)  }{\varphi }{} \, ds \Bigr| \\
 &\le C \cdot \Bigl(
\norm{\un - u }{{L}^{2}(0,T;{L}^{2}({\ocal }_{R}))}{}  \norm{\wn  }{{L}^{2}(0,T;{L}^{2}({\ocal }_{R}))}{}
+\norm{u}{{L}^{2}(0,T;{L}^{2}({\ocal }_{R}))}{} \norm{\wn - w }{{L}^{2}(0,T;{L}^{2}({\ocal }_{R})}{} 
 \Bigr) \, \norm{\varphi }{{H}^{m }}{} \\
& \le C \cdot  \Bigl( {p}_{T,R} (\un -u) \, \norm{\wn }{{L}^{2}(0,T;{L}^{2}({\ocal }_{}))}{}
  + \norm{u}{{L}^{2}(0,T;{H}_{0})}{}  {p}_{T,R}(\wn -w )\Bigr) 
\, \norm{\varphi }{{H}^{m }}{}  ,
\end{align*}
where ${p}_{T,R}$ is the seminorm defined by \eqref{E:seminorms} and $C$ stands for a positive constant.
Since $\un \to u $ and $\wn \to w$ in ${L}^{2}(0,T;{L}^{2}_{loc}(\ocal ))$, we infer that for all 
$ \varphi   \in \dcal (\ocal ,{\rzecz }^{{d}_{1}}) $
\begin{equation}
    \lim_{n \to \infty } \int_{0}^{t} \dual{ B(\un (s),\wn (s))  }{\varphi }{} \, ds
 = \int_{0}^{t} \dual{ B \bigl( u(s),w(s)\bigr)  }{\varphi }{} \, ds  . \label{E:Appendix_D_B_conv}
\end{equation}
If $\varphi  \in {H}^{m}_{0}(\ocal ,{\rzecz }^{{d}_{1}})$ then for every  $\eps > 0 $ there exists 
${\varphi }_{\eps } \in \dcal (\ocal ,{\rzecz }^{{d}_{1}}) $
such that $\norm{\varphi  - {\varphi }_{\eps }}{{H}^{m }}{} \le \eps $.
Then for all $s \in [0,t]$
\begin{align*}
 &\bigl| \dual{ B(\un (s),\wn (s)) - B(u(s),w(s)) }{\varphi }{} \bigr| 
  \le  \bigl| \dual{ B(\un (s),\wn (s))-B(u(s),w(s)) }{ \varphi -{\varphi }_{\eps }}{} \bigr| \\
 &\quad  + \bigl| \dual{ B(\un (s),\wn (s))- B(u(s),w(s))}{{\varphi }_{\eps } }{} \bigr|  \\
  & \le  \bigl( {\bigl| B(\un (s),\wn (s))\bigr| }_{{H}^{-m }}
  + {\bigl| B(u(s),w(s)) \bigr| }_{{H}^{-m }} \bigr)
  \cdot \norm{\varphi  -{\varphi }_{\eps }}{{H}^{m }}{}  \\
  &\quad  + \bigl| \dual{ B(\un (s),\wn (s)) - B(u(s),w(s))}{{\varphi }_{\eps }}{} \bigr| \\
   &\le \eps  \bigl( {|\un (s )|}_{0}  \, \norm{\wn (s)}{{L}^{2}(\ocal )}{}
   +  {|u(s)|}_{0} \, \norm{w(s)}{{L}^{2}(\ocal )}{} \bigr)
 + \bigl| \dual{ B(\un (s),\wn (s))- B(u(s),w(s))}{{\varphi }_{\eps } }{} \bigr| .
\end{align*}
Hence
\begin{align*}
 &\Bigl| \int_{0}^{t} \dual{B(\un (s),\wn (s)) - B(u(s),w(s))}{\varphi }{}  \, ds \Bigr| \\
  &\le  \eps \int_{0}^{t}  \bigl(  {|\un (s )|}_{0}  \, \norm{\wn (s)}{{L}^{2}(\ocal )}{}
   +  {|u(s)|}_{0} \, \norm{w(s)}{{L}^{2}(\ocal )}{}  \bigr)  d s 
  + \Bigl| \int_{0}^{t}\dual{B(\un (s),\wn (s))-B(u(s),w(s))}{{\varphi }_{\eps } }{}  ds  \Bigr| \\
 &\le \frac{\eps }{2}\cdot \bigl( \sup_{n\ge 1}(\norm{\un }{{L}^{2}(0,T;{H}_{0})}{2}
  + \norm{\wn }{{L}^{2}(0,T;{L}^{2}(\ocal ))}{2} ) +\norm{u}{{L}^{2}(0,T;{H}_{0})}{2}
  + \norm{w}{{L}^{2}(0,T;{L}^{2}(\ocal ))}{2} \bigr)  \\
& + \Bigl| \int_{0}^{t}\dual{B(\un (s),\wn (s))-B(u(s),w(s))}{{\varphi }_{\eps } }{}  ds \Bigr|
.
\end{align*}
Passing to the upper limit as $n \to \infty $, we obtain
\[
 \limsup_{n \to \infty }
\Bigl| \int_{0}^{t} \dual{B(\un (s),\wn (s))-B(u(s),w(s))}{\varphi }{}  \, ds \Bigr|
 \le   M \eps ,
\]
where $M:= \frac{1}{2} \bigl(\sup_{n\ge 1}(\norm{\un }{{L}^{2}(0,T;{H}_{0})}{2}
  + \norm{\wn }{{L}^{2}(0,T;{L}^{2}(\ocal ))}{2} ) +\norm{u}{{L}^{2}(0,T;{H}_{0})}{2}
  + \norm{w}{{L}^{2}(0,T;{L}^{2}(\ocal ))}{2} \bigr) <\infty $.
Since $\eps >0$ is arbitrary, we infer that (\ref{E:Appendix_D_B_conv}) holds for all $\varphi  \in {H}^{m}_{0}(\ocal ;{\rzecz }^{{d}_{1}})$.
The proof of the lemma  is thus complete.
\end{proof}

\bigskip  \noindent
\bf The  operator ${B}_{0}$\rm .
We will now concentrate on  the bilinear map $B$ in the spaces ${H}_{0}$ and ${V}_{0}$ defined by  \eqref{H_0} and \eqref{V_0}, respectively. We will denote it by ${B}_{0}$.
By \eqref{E:B_estimate_H^1} 
we infer that for $u,w \in {V}_{0}$, ${B}_{0}(u,w) \in {V}_{0}'$ and  
 the following inequality holds
\begin{align}  \label{E:estimate_B_0}
 |{B}_{0}(u,w) {|}_{{V}_{0}^{\prime }}   
   \le c \norm{u }{0}{}\norm{w }{0}{},\qquad u,w \in {V}_{0} .
\end{align}
In particular, the map ${B}_{0}:{V}_{0} \times {V}_{0} \to {V}_{0}' $ is bilinear and continuous.
Furthermore, by \eqref{E:antisymmetry_B_H^1}
\begin{align}  \label{E:antisymmetry_B_0}
    \dual{{B}_{0}(u,w)}{v}{} = - \dual{{B}_{0}(u,v)}{w}{} , \qquad u,w,v \in {V}_{0} 
\end{align}
and hence
\begin{align}  \label{E:wirowosc_b}
  \dual{{B}_{0}(u,v)}{v}{}=0,
 \qquad u,v \in {V}_{0}.
\end{align}
Let for any $m>0$, 
\begin{align}  \label{E:U_m}
  {U}_{m} := \text{the closure of ${\vcal }_{0}$ in ${H}^{m}(\ocal , \rd )$} .
\end{align}
In the space ${U}_{m}$ we consider the inner product inherited from ${H}^{m}(\ocal , \rd )$.
Let $m > \frac{d}{2} +1$.  By \eqref{E:B_estimate_H^m},  ${B}_{0}$ 
is a bounded bilinear operator
$
    {B}_{0} : {H}_{0} \times {H}_{0} \to {U}_{m}^{\prime } 
$
and the following inequality holds
\begin{align}  \label{E:estimate_B_0_ext}
 |{B}_{0}(u,w) {|}_{{U}_{m}'} \le c {|u|}_{0}  {|w|}_{0} ,\qquad u,w \in {H}_{0}.
\end{align}
We will also use the following notation, ${B}_{0}(u):={B}_{0}(u,u)$.

\bigskip  \noindent
Let us also recall that the mapping
${B}_{0}:{V}_{0} \to {V}_{0}'$  is locally Lipschitz continuous, i.e. for every $r>0$ there exists a constant ${L}_{r}$
such that 
\begin{equation}  \label{E:Lipschitz_B_0}
   \bigl| {B}_{0}(u) - {B}_{0}(\tilde{u}) {\bigr| }_{{V}_{0}'} \le {L}_{r} \norm{u - \tilde{u}}{0}{} ,
   \qquad u , \tilde{u } \in {V}_{0} , \quad \norm{u}{0}{}, \norm{\tilde{u}}{0}{} \le r  .
\end{equation}

\bigskip  \noindent
\bf Solution of the Navier-Stokes equations.  \rm 
Let ${u}_{0}\in {H}_{0}$, ${f}_{}\in {L}^{2}(0,T; {V}_{0}')$,
 ${G}_{}: [0,T] \times {V}_{0} \to \lhs ({Y}_{W},{H}_{0})$, where ${Y}_{W}$ is a separable Hilbert space, and
$F: [0,T] \times {H}_{0} \times Y \to {H}_{0} $, where $(Y, \ycal )$ is a measurable space,
be given.  

\begin{definition}  \rm  \label{D:NS}
 \bf A martingale solution \rm of  problem \eqref{E:NS}-\eqref{E:NS_boundary}
is a system 
\noindent
$\bigl( \bar{\mathfrak{A}}, \bar{\eta }, \bar{W},\bar{u} \bigr) $,
where $\bigl( \bar{\mathfrak{A}}, \bar{\eta }, \bar{W}\bigr) $ is as in Definition \ref{D:solution}
and $\bar{u}: [0,T] \times \bar{\Omega }\to {H}_{0} $ is a predictable process with $\bar{\p } $ - a.e. paths
\[
  \bar{u}(\cdot , \omega ) \in \dmath \bigl( [0,T], {{H}_{0,} }_{w} \bigr)
   \cap {L}^{2}(0,T;{V}_{0} )
\]
such that  for all $ t \in [0,T] $ and all $ v \in {\vcal }_{0} $ the following identity holds $\bar{\p }$-a.s.
\begin{align*}
 \ilsk{\bar{u}(t)}{v}{0} + & \int_{0}^{t} \dual{ {\acal }_{0}\bar{u}(s)}{v}{} \, ds
+ \int_{0}^{t} \dual{{B}_{0}(\bar{u}(s))}{v}{} \, ds \\
 & = \ilsk{{u}_{0}}{v}{0} +  \int_{0}^{t} \dual{f(s)}{v}{} \, ds
 + \int_{0}^{t} \int_{{Y}_{0}} \ilsk{{F}_{}(s,\bar{u}({s}^{-});y)}{v}{\hmath } \,  \tilde{\bar{\eta }} (ds,dy)  \\
 & + \int_{0}^{t} \int_{Y \setminus {Y}_{0}} \ilsk{{F}_{}(s,\bar{u}({s}^{-});y)}{v}{\hmath } 
 \, \bar{\eta } (ds,dy) 
+ \Dual{\int_{0}^{t} {G}_{}(s,\bar{u}(s))\, d\bar{W}(s)}{v}{} .
\end{align*}
\end{definition}
 \noindent
We apply the abstract framework with
$\hmath := {H}_{0}$, $\vmath := {V}_{0} $, ${\vmath }_{\ast }:= {U}_{m} $ 
with $m> \frac{d}{2}+1$, defined by \eqref{H_0}, \eqref{V_0} and \eqref{E:U_m}, respectively. Furthemore,
\begin{align*}
   & \acal : = {\acal }_{0} , \qquad \rcal :=0  
   \qquad \mbox{and} \qquad  \bcal  := {B}_{0} . 
\end{align*}
By \eqref{E:Acal_0}, it is evident that the operator ${\acal }_{0}$ satisfies condition (A.1).
By  \eqref{E:estimate_B_0}, \eqref{E:antisymmetry_B_0}, \eqref{E:estimate_B_0_ext} 
and  \eqref{E:Lipschitz_B_0} the map ${B}_{0}$ satisfies  conditions (B.1)-(B.4).
By Lemma \ref{L:B_conv_aux}, the map ${B}_{0}$ satisfies assumption (B.5).
Applying Theorem \ref{T:existence}, we obtain the following result about 
the existence of the solution of the Navier-Stokes problem.

\bigskip
\begin{cor} \label{C:existence_NS}
For every
${u}_{0}\in {H}_{0}$, ${f}_{}\in {L}^{2}(0,T; {V}_{0}^{\prime })$,
 ${G}_{}: [0,T] \times {V}_{0} \to \lhs (Y,{H}_{0})$ 
 satisfying conditions (G.1)-(G.3) and $F: [0,T] \times {H}_{0} \times Y \to {H}_{0} $
 satisfying conditions (F.1)-(F.3)
there exists a martingale solution $\bigl( \bar{\mathfrak{A}}, \bar{\eta }, \bar{W},\bar{u} \bigr) $ of  problem \eqref{E:NS}-\eqref{E:NS_boundary} such that
\[
   \bar{\e } \Bigl[ \sup_{t \in [0,T]} {|\bar{u}(t)|}_{{H}_{0}}^{2}  
   + \int_{0}^{T} \norm{\bar{u}(t)}{{V}_{0}}{2} \, dt  \Bigr] < \infty .
\]
\end{cor}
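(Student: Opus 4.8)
The plan is to realize the Navier--Stokes problem as a special instance of the abstract equation \eqref{E:equation} and then to invoke Theorem \ref{T:existence} verbatim. First I would fix the identifications $\hmath := {H}_{0}$, $\vmath := {V}_{0}$ and ${\vmath }_{\ast }:= {U}_{m}$ for a fixed $m > \frac{d}{2}+1$, together with the operators $\acal := {\acal }_{0}$, $\rcal := 0$ and $\bcal := {B}_{0}$. One checks immediately that ${H}_{0}$ and ${V}_{0}$ are separable Hilbert spaces continuously contained in ${L}^{2}(\ocal ;\rd )$ and ${H}^{1}(\ocal ;\rd )$ respectively, that the embedding $\vmath \hookrightarrow \hmath $ is dense and continuous, and --- crucially --- that the inner product of ${V}_{0}$ has the required form \eqref{E:V_il_sk}, with the Dirichlet form \eqref{E:il_sk_Dirichlet} playing the role of $\dirilsk{\cdot }{\cdot }{}$. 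Since ${U}_{m}$ is a separable Hilbert space densely and continuously embedded in ${V}_{0}$, the chain \eqref{E:V_*,V,H} holds, so the compactly embedded space $\umath $ of Section \ref{S:Funct_anal} is automatically available.

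Next I would verify the structural hypotheses on $\acal ,\bcal ,\rcal $ in turn. Condition (A.1) is exactly the defining relation \eqref{E:Acal_0} of ${\acal }_{0}$ once $\dirilsk{\cdot }{\cdot }{}$ is identified with the Dirichlet form. Condition (R.1) is trivial because $\rcal =0$. Conditions (B.1)--(B.4) have effectively already been recorded: bilinearity together with the bound \eqref{E:estimate_B_0} gives (B.1), the antisymmetry \eqref{E:antisymmetry_B_0} gives (B.2), the local Lipschitz property \eqref{E:Lipschitz_B_0} gives (B.3), and the extension bound \eqref{E:estimate_B_0_ext} into ${U}_{m}^{\prime }$ gives (B.4). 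The remaining conditions (C.1), (F.1)--(F.3) and (G.1)--(G.3) are imposed as hypotheses of the corollary, so nothing further is needed for them.

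The one assumption that genuinely requires work is (B.5), and this is where the difficulty created by the unbounded domain is concentrated. Here I would appeal to Lemma \ref{L:B_conv_aux} in the diagonal case $w_{n}=u_{n}$, $w=u$: given a sequence $(u_{n})$ bounded in ${L}^{2}(0,T;{H}_{0})$ with $u_{n}\to u$ in ${L}^{2}(0,T;{L}^{2}_{loc}(\ocal ))$, the lemma yields
\[
   \lim_{n\to \infty }\int_{0}^{t}\dual{{B}_{0}(u_{n}(s))}{\varphi }{}\, ds
 = \int_{0}^{t}\dual{{B}_{0}(u(s))}{\varphi }{}\, ds
\]
for every $\varphi \in {H}^{m}_{0}(\ocal ;\rd )$, hence in particular for every $\varphi \in {U}_{m}={\vmath }_{\ast }$, since ${U}_{m}\subset {H}^{m}_{0}(\ocal ;\rd )$. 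By Remark \ref{R:B.5_comment} this is precisely condition (B.5).

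Finally, with all of (A.1), (B.1)--(B.5), (R.1), (C.1), (F.1)--(F.3) and (G.1)--(G.3) verified, Theorem \ref{T:existence} applies and produces a martingale solution $(\bar{\mathfrak{A}},\bar{\eta },\bar{W},\bar{u})$ of \eqref{E:equation} in the present identifications; by Definition \ref{D:solution} this is exactly a martingale solution of \eqref{E:NS}--\eqref{E:NS_boundary} in the sense of Definition \ref{D:NS}. The regularity estimate \eqref{E:u_estimates} becomes the asserted bound on $\bar{\e }\bigl[\sup_{t\in[0,T]}{|\bar{u}(t)|}_{{H}_{0}}^{2}+\int_{0}^{T}\norm{\bar{u}(t)}{{V}_{0}}{2}\, dt\bigr]$. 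The main obstacle is thus entirely localized in assumption (B.5); once Lemma \ref{L:B_conv_aux} is in hand, the corollary is a direct translation.
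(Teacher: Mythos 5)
Your proof is correct and takes essentially the same route as the paper: the paper also sets $\hmath := {H}_{0}$, $\vmath := {V}_{0}$, ${\vmath }_{\ast }:= {U}_{m}$, $\acal := {\acal }_{0}$, $\bcal := {B}_{0}$, $\rcal := 0$, checks (A.1) via \eqref{E:Acal_0}, (B.1)--(B.4) via \eqref{E:estimate_B_0}, \eqref{E:antisymmetry_B_0}, \eqref{E:estimate_B_0_ext} and \eqref{E:Lipschitz_B_0}, obtains (B.5) from Lemma \ref{L:B_conv_aux}, and then applies Theorem \ref{T:existence}. Your explicit diagonal use of Lemma \ref{L:B_conv_aux} (with ${w}_{n}={u}_{n}$) combined with Remark \ref{R:B.5_comment} and the inclusion ${U}_{m}\subset {H}^{m}_{0}(\ocal ;\rd )$ merely spells out a step the paper leaves implicit.
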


\bigskip
\subsection{\bf Magneto-hydrodynamic equations (MHD) \rm } \label{Sub:MHD}

\noindent
The mathematical model of the motion of a resistive fluid is obtained by coupling the Navier-Stokes equations and the Maxwell equations (see Sermange and Temam \cite{Sermange_Temam_83}, 1983).
We will consider the following stochastic magneto-hydrodynamic (MHD) equations 
\begin{align}  \label{E:MHD}
  &  du(t) + (u \cdot \nabla ) \, u - \frac{1}{Re}  \Delta u
      - s (\mathfrak{b} \cdot \nabla ) \, \mathfrak{b}  + \nabla p
+ s \nabla \bigl( \frac{1}{2} {\mathfrak{b}}^{2} \bigr) \nonumber \\
 &\qquad ={f}_{0}(t)
  + {G}_{0}(t,u(t), \mathfrak{b}(t)) \, d{W}_{0}(t)
 + \int_{{Y}^{0}_{0}} {F}_{0}(t,u({t}^{-}),\mathfrak{b}({t}^{-});y) {\tilde{\eta }}_{0} (dt,dy) , \nonumber  \\
& \qquad + \int_{{Y}^{0} \setminus {Y}^{0}_{0}} {F}_{0}(t,u({t}^{-}),\mathfrak{b}({t}^{-});y) {\eta }_{0} (dt,dy) , \nonumber  \\
  &  d\mathfrak{b} (t) + (u \cdot \nabla ) \, \mathfrak{b}
- (\mathfrak{b} \cdot \nabla ) \, u - \frac{1}{Rm}  \, \curl (\curl \mathfrak{b}) \nonumber  \\
 &\qquad = {f}_{1}(t)
   + {G}_{1}(t,u(t), \mathfrak{b}(t)) \, d{W}_{1}(t)
+ \int_{{Y}^{1}_{0}} {F}_{1}(t,u({t}^{-}),\mathfrak{b}({t}^{-});y) {\tilde{\eta }}_{1} (dt,dy)
   \nonumber \\
&\qquad  + \int_{{Y}^{1}\setminus {Y}^{1}_{0}} {F}_{1}(t,u({t}^{-}),\mathfrak{b}({t}^{-});y) {\eta }_{1} (dt,dy)  ,
\end{align}
in $(0,T)\times \ocal $,
with the conditions 
\begin{equation}  \label{E:MHD_incomp}
\begin{split} 
  & \diver u =0, \quad \diver \mathfrak{b} =0 \quad \text{ in $(0,T)\times \ocal $}
 \end{split}
\end{equation} 
and the following boundary conditions
\begin{equation}  \label{E:MHD_boundary}
\begin{split}  
  & u= 0 \quad \text{ and }  \quad \mathfrak{b} \cdot n = 0 \quad   
    \quad \text{on } \partial \ocal  ,  
 \end{split}
\end{equation}
where $n=({n}_{1},...,{n}_{d})$ stands for the unit outward normal on $\partial \ocal $.
Moreover, we impose the initial conditions
\begin{equation}  \label{E:MHD_initial}
\begin{split} 
   u(0)= {u}_{0} , \qquad \mathfrak{b} (0)={\mathfrak{b}}_{0}.
 \end{split}
\end{equation}
Here $u$, $p$, $\mathfrak{b}$ are interpreted as the velocity, the pressure
and the magnetic field. 
The three positive constants
$\frac{1}{Re} $, $\frac{1}{Rm} $ and $s$ correspond to the kinematic viscosity, the magnetic diffusivity
and the Hartman number, respectively,
see  Duvaut and Lions \cite{Duvaut_Lions_76} and Sermange and Temam \cite{Sermange_Temam_83}.
These equations are used to describe the turbulent flows in magnetohydrodynamics.
Moreover, ${f}_{0}$ and ${f}_{1}$ stand for deterministic external forces,   ${W}_{0}$ and ${W}_{1}$ are cylindrical Wiener processes in Hilbert spaces ${Y}_{W}^{0}$ and ${Y}_{W}^{1}$, respectively, ${\tilde{\eta }}_{0}$ and ${\tilde{\eta }}_{1}$ are compensated time homogeneous Poisson random measures with intensities ${\mu }_{0}$ and ${\mu }_{1}$ on measurable spaces $({Y}^{0},{\ycal }^{0})$ and 
$({Y}^{1},{\ycal }^{1})$, respectively. The sets ${Y}^{0}_{0} \in {\ycal }^{0}$ 
and ${Y}^{1}_{0} \in {\ycal }^{1}$ are such that ${\mu }_{0}({Y}^{0}\setminus {Y}^{0}_{0})<\infty $ and
${\mu }_{1}({Y}^{1}\setminus {Y}^{1}_{0})<\infty $.
The processes ${W}_{0},{W}_{1},{\eta }_{0}, {\eta }_{1}$ are assumed to be independent.

\bigskip  \noindent
\bf Function spaces. \rm
Let us recall that 
the spaces used in the theory of the magneto-hydro\-dy\-na\-mic equations are products of the
spaces used for the Navier-Stokes equations, i.e. ${\vcal }_{0}$, ${H}_{0}$ and ${V}_{0}$ defined
by \eqref{vcal_0}, \eqref{H_0}, \eqref{V_0}  
and spaces used in the theory of the Maxwell equations (spaces denoted with the subscript 1).
Namely, see \cite{Sermange_Temam_83},
\begin{align*}
{\vcal }_{1} &= \{ \mathfrak{c} \in {\ccal }^{\infty } (\overline{\ocal },\rd), \, \, \, 
             \diver \mathfrak{c} = 0,
               \text{ and } (\mathfrak{c} \cdot n {)}_{|\partial \ocal } =0   \} , \\
   {V}_{1} &= \text{the closure of ${\vcal }_{1}$ in ${H}^{1}(\ocal ,\rd )$}  
            = \{ \mathfrak{c} \in {H}^{1}(\ocal ,\rd ), \, \, \, \diver \mathfrak{c} =0  
                \text{ and } (\mathfrak{c} \cdot n {)}_{|\partial \ocal  } =0 \} ,  \\
   {H}_{1} &= \text{the closure of ${\vcal }_{1}$ in ${L}^{2}(\ocal ,\rd )$}  .
\end{align*}
In the space ${H}_{1}$ we consider the inner product and the norm defined by
\begin{align}
     \ilsk{\mathfrak{b}}{\mathfrak{c}}{1} := s \ilsk{\mathfrak{b}}{\mathfrak{c}}{{L}^{2}} ,
     \qquad \norm{\mathfrak{b}}{}{2}:= \ilsk{\mathfrak{b}}{\mathfrak{b}}{1} ,
  \qquad \mathfrak{b},\mathfrak{c} \in {H}_{1}.   
\end{align}
In the space ${V}_{1}$, we consider the inner product 
$
  \ilsk{\mathfrak{b}}{\mathfrak{c}}{{V}_{1}} :=
  \ilsk{\mathfrak{b}}{\mathfrak{c}}{1}
  +\dirilsk{\mathfrak{b}}{\mathfrak{c}}{1} , 
$
where
\begin{align}
   \dirilsk{\mathfrak{b}}{\mathfrak{c}}{1} 
:=  \frac{s}{Rm}  \ilsk{\curl \mathfrak{b}}{\curl \mathfrak{c} }{{L}^{2}} ,
\qquad \mathfrak{b},\mathfrak{c} \in {V}_{1}.
\end{align} 
and the norm  $\norm{\mathfrak{b}}{1}{2} := \ilsk{\mathfrak{b}}{\mathfrak{b}}{{V}_{1}}$. 
Finally, we define the spaces
\begin{equation}  \label{E:MHD_H_V}
   \vmath := {V}_{0} \times {V}_{1} , \qquad \hmath := {H}_{0} \times {H}_{1} , \qquad 
   {\vmath }^{\prime }:= \text{the dual space of $\vmath $}
\end{equation}
with the following inner products
\begin{align*}
  \ilsk{\Phi }{\Psi }{\hmath } & := 
   \ilsk{u}{v}{0} + \ilsk{\mathfrak{b}}{\mathfrak{c}}{1}
\qquad \text{for all} \quad \Phi =(u,\mathfrak{b}), \, \, \,  \Psi =(v,\mathfrak{c}) \in \hmath   \\
 \ilsk{\Phi }{\Psi }{\vmath } & = \ilsk{\Phi }{\Psi }{\hmath } 
 + \dirilsk{\Phi }{\Psi }{} 
 \qquad \text{for all} \qquad \Phi , \Psi \in \vmath ,
\end{align*} 
where
$
  \dirilsk{\Phi }{\Psi }{} :=\dirilsk{u}{v}{0} +\dirilsk{\mathfrak{b}}{\mathfrak{c}}{1} 
$
and $\ilsk{\cdot }{\cdot }{0} $ and $\dirilsk{\cdot }{\cdot }{0}$ are defined by 
\eqref{E:il_sk_H_0} and \eqref{E:il_sk_Dirichlet}, respectively.
We have
$
      \vmath  \subset \hmath  \subset {\vmath }^{\prime } ,
$
where the embeddings are dense and continuous.

\bigskip  \noindent
\bf The operator $\acal $.  \rm 
We define the operators
${\acal }_{1}$ and  
$\acal $ by the following formulae
\begin{align}
 \dual{{\acal }_{1}\mathfrak{b}}{\mathfrak{c}}{} &:= \dirilsk{\mathfrak{b}}{\mathfrak{c}}{1}, 
\qquad \mathfrak{b},\mathfrak{c} \in {V}_{1},  \notag \\
 \dual{\acal \Phi }{\Psi }{} &:= \dual{{\acal }_{0}u}{v}{} 
   + \dual{{\acal }_{1}\mathfrak{b}}{\mathfrak{c}}{},
 \qquad \Phi ,\Psi \in \vmath ,    \label{E:MHD_Acal}
\end{align}
where ${\acal }_{0}$ is defined by \eqref{E:Acal_0}. It is clear that
${\acal }_{1} \in \lcal ({V}_{1},{V}_{1}')$ and  
$\acal \in \lcal (\vmath ,\vmath ')$.
Let us also notice that 
\begin{equation}
  \dual{\acal \Phi }{\Psi }{} = \dirilsk{\Phi }{\Psi }{}, \qquad \Phi ,\Psi \in \vmath .  
\end{equation}

\bigskip  \noindent
\bf The form $\hat{b}$ and the operator $\hat{B}$\rm .
Using the form $b$  defined by \eqref{E:form_b} we will consider the tri-linear  form  $\hat{b}$ on $\vmath \times \vmath \times \vmath $, where $\vmath $ is defined by \eqref{E:MHD_H_V}, see  Sango \cite{Sango_2010} and Sermange and Temam \cite{Sermange_Temam_83}. Namely,
\begin{align*}
 \hat{b} ({\Phi }^{(1)},{\Phi }^{(2)},{\Phi }^{(3)})
  := & b({u}^{(1)}, {u}^{(2)}, {u}^{(3)})
     -  s b({\mathfrak{b}}^{(1)}, {\mathfrak{b}}^{(2)}, {u}^{(3)})  \\
     & + sb({u}^{(1)}, {\mathfrak{b}}^{(2)}, {\mathfrak{b}}^{(3)})
     - sb({\mathfrak{b}}^{(1)}, {u}^{(2)}, {\mathfrak{b}}^{(3)}) ,
\end{align*}
where ${\Phi }^{(i)} = ({u}^{(i)},{\mathfrak{b}}^{(i)}) \in \vmath $, $i=1,2,3$. 
By \eqref{E:b_estimate_H^1} we see that the form $\hat{b}$ is continuous.
Moreover, by \eqref{E:antisymmetry_b_H^1} and \eqref{E:wirowosc_b_H^1}
the form $\hat{b}$ has the following properties, see also \cite{Sango_2010},
\begin{align} \label{E:16_Sango}
   &  \hat{b} ({\Phi }^{(1)},{\Phi }^{(2)},{\Phi }^{(3)})
       = -  \hat{b} ({\Phi }^{(1)},{\Phi }^{(3)},{\Phi }^{(2)}) , \qquad 
{\Phi }^{(i)} \in \vmath , \quad i=1,2,3  
\end{align}
and in particular
\begin{align}  
   & \hat{b} ({\Phi }^{(1)},{\Phi }^{(2)},{\Phi }^{(2)}) =0 , \qquad {\Phi }^{(1)}, {\Phi }^{(2)} \in \vmath .
   \label{E:15_Sango}
\end{align}        
Now, let us define a bilinear map $\hat{B}$ by 
\begin{equation}  \label{E:MHD_B}
   \hat{B} (\Phi ,\Psi ) := 
   \hat{b} (\Phi ,\Psi ,\cdot )  , 
       \qquad \qquad \Phi ,\Psi  \in \vmath . 
\end{equation}
We will also use the notation $ \hat{B} (\Phi ) :=  \hat{B} (\Phi ,\Phi ) $.
For $m>0$ us define the following space
\begin{equation}
   {\vmath }_{m} := \text{ the closure of  ${\vcal }_{0} \times {\vcal }_{1}  $ in the space
   ${H}^{m}(\ocal , \rd ) \times {H}^{m}(\ocal , \rd )$.} \label{E:V_m}
\end{equation}

\noindent
We will collect properties of the map $\hat{B}$ in the following lemma.

\begin{lemma}   \label{L:MHD_properties_B}
\begin{description}
\item[(1) ] There exists a constant ${c}_{1} >0 $ such that 
\[
       | \hat{B}(\Phi , \Psi ){|}_{\vmath '} \le {c}_{1}\norm{\Phi }{\vmath }{} \norm{\Psi }{\vmath }{}  , \qquad \Phi ,\Psi  \in \vmath . 
\]
In particular, the form $\hat{B}: \vmath \times \vmath \to \vmath '$ is bilinear and continuous.
Moreover, 
\[
      \dual{\hat{B}(\Phi ,\Psi )}{\Theta  }{} = - \dual{\hat{B}(\Phi ,\Theta  )}{\Psi  }, 
      \qquad \Phi ,\Psi , \Theta  \in \vmath . 
\]
\item[(2) ] The mapping $\hat{B}$ is locally Lipschitz continuous on the space $\vmath $, i.e.
   for every $r>0 $ there exists a constant ${L}_{r}>0$ such that 
\[
  | \hat{B}(\Phi )- \hat{B} (\tilde{\Phi } ){|}_{\vmath '} 
  \le {L}_{r}  \norm{\Phi -\tilde{\Phi } ) }{\vmath }{} , \qquad \Phi ,\tilde{\Phi } \in \vmath ,
  \quad \norm{\Phi }{\vmath }{} , \norm{\tilde{\Phi }}{\vmath }{} \le r .
\]   
\item[(3) ] If $m> \frac{d}{2}+1$, then $\hat{B}$ can be extended to the bilinear mapping from $\hmath \times \hmath $ to ${\vmath }_{m}'$ (denoted still by $\hat{B}$) such that
\[
       | \hat{B}(\Phi , \Psi ){|}_{{\vmath }_{m}'} \le {c}_{2}|\Phi {|}_{\hmath } |\Psi {|}_{\hmath } , \qquad \Phi ,\Psi  \in \hmath  ,
\]
where ${c}_{2}$ is a positive constant.
\end{description}
\end{lemma}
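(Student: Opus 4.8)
The plan is to reduce every assertion to a corresponding property of the scalar trilinear form $b$, namely the estimates \eqref{E:b_estimate_H^1} and \eqref{E:b_estimate_H^m}, together with the algebraic identities \eqref{E:16_Sango}--\eqref{E:15_Sango}, treating the four summands in the definition of $\hat{b}$ one at a time. For part (1), since the norms on $\vmath = {V}_{0} \times {V}_{1}$ are equivalent to those inherited from ${H}^{1}(\ocal ;\rd ) \times {H}^{1}(\ocal ;\rd )$, applying \eqref{E:b_estimate_H^1} separately to each of the four terms $b(u^{(1)},u^{(2)},u^{(3)})$, $b(\mathfrak{b}^{(1)},\mathfrak{b}^{(2)},u^{(3)})$, $b(u^{(1)},\mathfrak{b}^{(2)},\mathfrak{b}^{(3)})$ and $b(\mathfrak{b}^{(1)},u^{(2)},\mathfrak{b}^{(3)})$ and summing gives $|\hat{b}(\Phi ,\Psi ,\Theta )| \le {c}_{1} \norm{\Phi }{\vmath }{} \norm{\Psi }{\vmath }{} \norm{\Theta }{\vmath }{}$; taking the supremum over $\norm{\Theta }{\vmath }{} \le 1$ produces the bound on $\hat{B}(\Phi ,\Psi )$ in $\vmath '$ and the asserted continuity. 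The antisymmetry relation is then just \eqref{E:16_Sango} read through the identification $\dual{\hat{B}(\Phi ,\Psi )}{\Theta }{} = \hat{b}(\Phi ,\Psi ,\Theta )$.

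For part (2) I would use bilinearity: writing $\hat{B}(\Phi ) - \hat{B}(\tilde{\Phi }) = \hat{B}(\Phi - \tilde{\Phi }, \Phi ) + \hat{B}(\tilde{\Phi }, \Phi - \tilde{\Phi })$ and invoking the bound from part (1) gives $|\hat{B}(\Phi ) - \hat{B}(\tilde{\Phi })|_{\vmath '} \le {c}_{1}(\norm{\Phi }{\vmath }{} + \norm{\tilde{\Phi }}{\vmath }{})\norm{\Phi - \tilde{\Phi }}{\vmath }{}$, so on the ball of radius $r$ the local Lipschitz constant may be taken as ${L}_{r} = 2{c}_{1}r$.

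For part (3), which I expect to be the delicate point, I would replace \eqref{E:b_estimate_H^1} by the higher-order estimate \eqref{E:b_estimate_H^m}. The crucial observation is that in each of the four summands of $\hat{b}$ the field occupying the first slot of $b$ is either the velocity component $u^{(1)} \in {H}_{0}$ or the magnetic component $\mathfrak{b}^{(1)} \in {H}_{1}$; in either case it is divergence-free with vanishing normal trace on $\partial \ocal $, which is exactly the hypothesis under which the integration by parts behind \eqref{E:b_estimate_H^m} is valid. Thus each term is estimated by $c\,|\cdot |_{0}\,|\cdot |_{0}\,\norm{\cdot }{{H}^{m}}{}$ with the first two factors controlled by $|\Phi |_{\hmath }$ and $|\Psi |_{\hmath }$ and the last by $\norm{\Theta }{{\vmath }_{m}}{}$; summing the four contributions and passing to the supremum over $\norm{\Theta }{{\vmath }_{m}}{} \le 1$ yields $|\hat{B}(\Phi ,\Psi )|_{{\vmath }_{m}'} \le {c}_{2}|\Phi |_{\hmath }|\Psi |_{\hmath }$. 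Since ${\vcal }_{0} \times {\vcal }_{1}$ is dense in $\hmath $, this bound shows that the form, a priori defined on smooth arguments, extends continuously to all of $\hmath \times \hmath $ with values in ${\vmath }_{m}'$. The main obstacle is therefore bookkeeping rather than analysis: one must check summand by summand that the field placed in the first argument of $b$ carries the divergence-free and vanishing-normal-trace conditions, so that invoking \eqref{E:b_estimate_H^m} is legitimate for each of the four pieces.
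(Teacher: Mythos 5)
Your proposal is correct and follows essentially the same route as the paper: part (1) from the estimate \eqref{E:b_estimate_H^1} applied termwise to the four summands of $\hat{b}$ together with the identities \eqref{E:16_Sango}--\eqref{E:15_Sango}, part (2) by the standard bilinear splitting of $\hat{B}(\Phi )-\hat{B}(\tilde{\Phi })$ yielding ${L}_{r}=2{c}_{1}r$, and part (3) from \eqref{E:b_estimate_H^m} plus density of ${\vcal }_{0}\times {\vcal }_{1}$ in $\hmath $. The paper's proof is merely terser; your bookkeeping in (3), checking that the first argument of $b$ in each summand is divergence-free with vanishing normal trace so that \eqref{E:b_estimate_H^m} applies, is exactly the verification the paper leaves implicit.
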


\bigskip 
\begin{proof}
Using the definition \eqref{E:MHD_B} of the mapping $\hat{B}$, we infer that
assertion (1) follows from \eqref{E:b_estimate_H^1},  \eqref{E:16_Sango} and \eqref{E:15_Sango}.
Assertion (2) follows from the following inequalities
\begin{align*}
\bigl| \hat{B}(\Phi ,\Phi ) -  \hat{B}(\tilde{\Phi },\tilde{\Phi }) {\bigr| }_{\vmath'}
 & \le \bigl| \hat{B}(\Phi ,\Phi -\tilde{\Phi })  {\bigr| }_{\vmath'}
   + \bigl| \hat{B}(\Phi -\tilde{\Phi },\tilde{\Phi }) {\bigr| }_{\vmath'} \\
& \le \norm{\hat{B}}{}{} \cdot \norm{\Phi }{\vmath}{} \norm{\Phi -\tilde{\Phi }}{\vmath}{}
   + \norm{\hat{B}}{}{} \cdot \norm{\Phi -\tilde{\Phi }}{\vmath}{} \norm{\tilde{\Phi }}{\vmath}{} \\
& = \norm{\hat{B}}{}{}  (\norm{\Phi }{\vmath}{} +\norm{\tilde{\Phi }}{\vmath}{} ) \norm{\Phi -\tilde{\Phi }}{\vmath}{}
  \le 2r  \norm{\hat{B}}{}{}  \cdot \norm{\Phi -\tilde{\Phi }}{\vmath}{} .
\end{align*}
Thus the Lipschitz condition holds with ${L}_{r}= 2r \norm{\hat{B}}{}{} $, where $\norm{\hat{B}}{}{}$ stands for the norm of the bilinear map $\hat{B}:\vmath \times \vmath  \to \vmath '$.
Assertion (3) follows from \eqref{E:b_estimate_H^m}. The proof is thus complete.
\end{proof}

\bigskip  \noindent
\bf Weak formulation of problem \eqref{E:MHD}\rm .
Let $\hmath $ and  $\vmath  $ be the Hilbert spaces  defined by \eqref{E:MHD_H_V}. 
\begin{itemize}
\item Let ${f}_{0} \in {L}^{2}(0,T; {V}_{0}')$, ${f}_{1} \in {L}^{2}(0,T; {V}_{1}')$,
${u}_{0} \in {H}_{0}$ and ${\mathfrak{b}}_{0} \in {H}_{1}$ be given
and let
\[
    f:=({f}_{0},{f}_{1}) , \qquad 
    {\Phi }_{0} := ({u}_{0},{\mathfrak{b}}_{0} ). 
\]
Then $f \in {L}^{2}(0,T;{\vmath }^{\prime })$ and ${\Phi }_{0} \in \hmath $.
\item Let ${Y}_{W}:={Y}^{0}_{W} \times {Y}^{1}_{W}$ and let $W(t) = ({W}_{0}(t), {W}_{1}(t))$. Then $W$ is a cylindrical Wiener process on the space ${Y}_{W}$.
Moreover, let ${G}_{0}: [0,T] \times {V}_{0} \times {V}_{1} \to \lhs ({Y}^{0}_{W},{H}_{0})$ and 
${G}_{1}:[0,T] \times {V}_{0} \times {V}_{1} \to \lhs ({Y}^{1}_{W},{H}_{1})$
be given and let us define the map $G$ by the formula
\begin{equation}
    G (\Phi )h := ({G}_{0}(u ){h}_{0}, {G}_{1}(\mathfrak{b} ){h}_{1}) , 
\end{equation}
where $\Phi = (u, \mathfrak{b}) \in {V}_{0}\times {V}_{1}$, $h :=({h}_{1},{h}_{2}) \in {Y}_{W}$ and
$t \in [0,T]$.
Then $G:\vmath \to \lhs ({Y}_{W}, \hmath )$.
\item 
Let $Y:={Y}^{0} \times {Y}^{1}$. Then $(Y,\ycal )$, where $\ycal := {\ycal }^{0} \otimes {\ycal }^{1}  $ is a measurable space and  $\eta (dt,dy):= ({\eta }_{0}(dt,d{y}_{0}),{\eta }_{1}(dt,d{y}_{2})) $ is a time homogeneous Poisson random measure on $(Y,\ycal )$ with the intensity measure $\mu := {\mu }_{0} \otimes {\mu }_{1}$. Let ${Y}_{0}:={Y}^{0}_{0}\times {Y}^{1}_{0}$.
Let ${F}_{0}:[0,T] \times {H}_{0} \times {H}_{1} \times {Y}_{0} \to {H}_{0}$ and  
${F}_{1}:[0,T] \times {H}_{0} \times {H}_{1} \times {Y}_{1} \to {H}_{1}$ be given 
 and let us define the map $F$ by the formula
\begin{equation}
 F(t,\Phi ;y) := ({F}_{0}(t,u,\mathfrak{b};{y}_{0}), {F}_{1}(t,u,\mathfrak{b};{y}_{1})), 
\end{equation}
where $\Phi = (u, \mathfrak{b}) \in {H}_{0}\times {H}_{1}$, $ y:=({y}_{0},{y}_{1}) \in Y $ and 
$ t \in [0,T] $.
Then
$F:[0,T]\times \hmath \times Y \to \hmath $.
\end{itemize}

\bigskip  \noindent
We apply the abstract framework with the spaces 
$\hmath $ and  $\vmath  $ defined by \eqref{E:MHD_H_V},  the space ${\vmath }_{\ast }:= {\vmath }_{m}$ with $m > \frac{d}{2}+1$ defined by \eqref{E:V_m}, the operator $\acal $ 
defined by  \eqref{E:MHD_Acal},
\begin{align*}
   & \bcal (\Phi ,\Psi ) := \hat{B}(\Phi ,\Psi ) , \qquad \Phi ,\Psi  \in \vmath  ,
\end{align*}
where $\hat{B}$ is defined by \eqref{E:MHD_B}, and $\rcal :=0$. By Lemma \ref{L:MHD_properties_B} the map $\bcal $ satisfies assumptions (B.1)-(B.4).
By Lemma \ref{L:B_conv_aux}, the map $\bcal $ satisfies assumption (B.5).

\begin{definition}  \rm  \label{D:MHD}
 \bf A martingale solution \rm of the problem \eqref{E:MHD}--\eqref{E:MHD_initial} is a system
$(\bar{\mathfrak{A}} , \bar{\eta }, \bar{W}, \bar{\Phi })$, where 
 $\bigl( \bar{\mathfrak{A}}, \bar{\eta }, \bar{W}\bigr) $ is as in Definition \ref{D:solution}
and 
 $\bar{\Phi }: [0,T] \times \bar{\Omega } \to \hmath $ is a predictable process with $\bar{\p } $ - a.e. paths
\[
  \bar{\Phi } (\cdot , \omega ) \in \dmath \bigl( [0,T], {\hmath  }_{w} \bigr)
   \cap {L}^{2}(0,T;\vmath )
\] 
such that for all $ t \in [0,T] $ and all $ \Psi \in \vmath  $ the following identity holds 
$\bar{\p }$- a.s.
\begin{align*}
 &\ilsk{\bar{\Phi } (t)}{\psi }{} +  \int_{0}^{t} \dual{ \acal \bar{\Phi } (\sigma )}{\Psi }{} \, d\sigma 
+ \int_{0}^{t} \dual{\hat{B}(\bar{\Phi } (\sigma ),\bar{\Phi }(\sigma ))}{\Psi }{} \, d\sigma  
  = \ilsk{{\Phi }_{0}}{\Psi }{\hmath } \\
& \quad +  \int_{0}^{t} \dual{f(\sigma )}{\Psi }{} \, d\sigma 
 + \int_{0}^{t} \int_{{Y}_{0}} \ilsk{{F}_{}(s,\bar{\Phi } ({s}^{-});y)}{\Psi }{\hmath } \,  \tilde{\bar{\eta }} (ds,dy)  \\
& \quad  + \int_{0}^{t} \int_{Y \setminus {Y}_{0}} \ilsk{{F}_{}(s,\bar{\Phi } ({s}^{-});y)}{\Psi }{\hmath } \,  \bar{\eta } (ds,dy) 
 + \Dual{\int_{0}^{t} G(\bar{\Phi } (\sigma ))\, d\bar{W}(\sigma )}{\Psi }{} .
\end{align*}
\end{definition}
 \noindent
Applying Theorem \ref{T:existence}, we obtain the following result about 
the existence of the martingale solution of the magneto-hydrodynamic equations. 

\begin{cor} \label{C:MHD_existence}
For every ${\Phi }_{0}=({u}_{0},{\mathfrak{b}}_{0}) \in \hmath $, $f\in {L}^{2}(0,T;\vmath ')$, $G:[0,T] \times \vmath \to \lhs (Y, \hmath )$ satisfying conditions   (G.1)-(G.3) and $F: [0,T] \times \hmath \times Y \to \hmath  $ 
satisfying conditions   (F.1)-(F.3)
there exists a martingale solution $(\bar{\mathfrak{A}} , \bar{\eta }, \bar{W}, \bar{\Phi })$, 
where $\bar{\Phi }=(\bar{u}, \bar{\mathfrak{b}})$,
 of  problem \eqref{E:MHD}-\eqref{E:MHD_initial} such that
\[
  \bar{\e } \Bigl[ 
\sup_{t \in [0,T]} ({|\bar{u}(t)|}_{{H}_{0}}^{2} + {|\bar{\mathfrak{b}}(t)|}_{{H}_{1}}^{2})
 + \int_{0}^{T} (\norm{\bar{u}(t)}{{V}_{0}}{2} +\norm{\bar{\mathfrak{b}}(t)}{{V}_{1}}{2}) \, dt
  \bigr]  < \infty .
\]
\end{cor}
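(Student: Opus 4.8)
The plan is to recognize Corollary \ref{C:MHD_existence} as a direct application of the abstract Theorem \ref{T:existence}, so that the entire argument reduces to checking that the concrete data and operators of the MHD system fulfil hypotheses (A.1), (B.1)--(B.5), (R.1), (C.1), (F.1)--(F.3) and (G.1)--(G.3). First I would fix the functional setting already prepared above: $\hmath = {H}_{0} \times {H}_{1}$ and $\vmath = {V}_{0} \times {V}_{1}$ from \eqref{E:MHD_H_V}, together with ${\vmath }_{\ast }:= {\vmath }_{m}$ for some $m > \frac{d}{2}+1$ from \eqref{E:V_m}; the embeddings ${\vmath }_{m} \hookrightarrow \vmath \hookrightarrow \hmath $ are dense and continuous, and a further compactly embedded space $\umath $ exists as in Section \ref{S:Funct_anal}, so that the abstract scaffolding applies verbatim.

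Next I would dispatch the operator conditions. Condition (A.1) holds because $\acal $ defined by \eqref{E:MHD_Acal} is linear and bounded from $\vmath $ into ${\vmath }^{\prime }$ and satisfies $\dual{\acal \Phi }{\Psi }{} = \dirilsk{\Phi }{\Psi }{}$, which is precisely the defining identity of (A.1). Conditions (B.1)--(B.4) for $\bcal := \hat{B}$ are exactly the content of Lemma \ref{L:MHD_properties_B}, and (B.5) follows from Lemma \ref{L:B_conv_aux}. Since the zeroth-order term is absent, $\rcal := 0$, condition (R.1) is immediate with any ${c}_{3}>0$, as $-\dual{\rcal \Phi }{\Phi }{} = 0$. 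The deterministic part of (C.1) is secured by the construction of the data, since $f = ({f}_{0},{f}_{1}) \in {L}^{2}(0,T;{\vmath }^{\prime })$ and ${\Phi }_{0} = ({u}_{0},{\mathfrak{b}}_{0}) \in \hmath $, while the remaining conditions (F.1)--(F.3) and (G.1)--(G.3) on the noise are assumed outright in the statement of the Corollary.

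With all hypotheses verified, Theorem \ref{T:existence} yields a martingale solution $\bigl( \bar{\mathfrak{A}}, \bar{\eta }, \bar{W}, \bar{\Phi }\bigr) $, where I write $\bar{\Phi } = (\bar{u}, \bar{\mathfrak{b}})$ according to the product structure $\hmath = {H}_{0} \times {H}_{1}$. The abstract integral identity reduces, upon inserting the definitions of $\acal $, $\hat{B}$, $G$ and $F$, to the weak formulation of Definition \ref{D:MHD}. Finally, since the inner products on $\hmath $ and $\vmath $ split as direct sums, one has ${|\bar{\Phi }(t)|}_{\hmath }^{2} = {|\bar{u}(t)|}_{{H}_{0}}^{2} + {|\bar{\mathfrak{b}}(t)|}_{{H}_{1}}^{2}$ and $\norm{\bar{\Phi }(t)}{\vmath }{2} = \norm{\bar{u}(t)}{{V}_{0}}{2} + \norm{\bar{\mathfrak{b}}(t)}{{V}_{1}}{2}$, so that the abstract bound \eqref{E:u_estimates} transcribes directly into the asserted MHD energy estimate.

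I would flag the verification of (B.5) as the only step demanding genuine care: $\hat{B}$ is a sum of four trilinear contributions built from the Navier--Stokes form $b$, and Lemma \ref{L:B_conv_aux} must be applied to each of them separately, tracking which component ($u$ or $\mathfrak{b}$) carries the ${L}^{2}(0,T;{L}^{2}_{loc}(\ocal ))$-convergence in the cross terms. Because every component is $\rd $-valued (so the lemma applies with ${d}_{1}=d$) and the test functions lie in ${\vmath }_{m}$ with $m > \frac{d}{2}+1$, each term converges as required and assumption (B.5) for $\hat{B}$ follows; the coercivity inequality \eqref{E:G}, being part of (G.2), is supplied by hypothesis rather than proved here.
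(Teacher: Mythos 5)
Your proposal is correct and follows essentially the same route as the paper: the functional setting $\hmath = {H}_{0}\times {H}_{1}$, $\vmath = {V}_{0}\times {V}_{1}$, ${\vmath }_{\ast }={\vmath }_{m}$, verification of (A.1) from \eqref{E:MHD_Acal}, of (B.1)--(B.4) via Lemma \ref{L:MHD_properties_B}, of (B.5) via Lemma \ref{L:B_conv_aux}, the trivial (R.1) for $\rcal =0$, and then a direct application of Theorem \ref{T:existence}, with the energy bound splitting along the product structure of $\hmath $ and $\vmath $. Your additional remark that Lemma \ref{L:B_conv_aux} must be applied term by term to the four trilinear pieces of $\hat{B}$ (with ${d}_{1}=d$, and noting which slot carries the local ${L}^{2}$-convergence) is a point the paper passes over silently, and it is a welcome clarification rather than a deviation.
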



\bigskip
\subsection{\bf Boussinesq equations \rm }

\noindent
We consider $\rd $, where $d=2,3$, with the canonical basis $\{ {e}_{1} , {e}_{2} \} $ or $\{ {e}_{1},{e}_{2},{e}_{3} \} $
and  the Boussinesq model for the B\'{e}nard problem with random influences in the  domain $\ocal $
\begin{align} 
 &  du(t) + (u \cdot \nabla ) u 
  -  \frac{1}{Re} \Delta u - \vartheta {e}_{d} + \nabla p  
  =  {f}_{0}(t) +  {G}_{0}(t,u(t),\vartheta (t)) \, d {W}_{0}(t) \nonumber  \\
 & + \int_{{Y}^{0}_{0}} {F}_{0}(t,u({t}^{-}),\vartheta ({t}^{-});y) {\tilde{\eta }}_{0} (dt,dy)
  + \int_{{Y}^{0}\setminus {Y}^{0}_{0}} {F}_{0}(t,u({t}^{-}),\vartheta ({t}^{-});y) {\eta }_{0} (dt,dy), 
   \nonumber  \\
  & d\vartheta (t)  + (u \cdot \nabla ) \vartheta  - \kappa \Delta \vartheta -{u}_{d} 
   = {f}_{2}(t) + {G}_{2}(t,u(t),\vartheta (t)) \, d {W}_{2}(t) \nonumber \\
 &+ \int_{{Y}^{2}_{0}} {F}_{2}(t,u({t}^{-}),\vartheta ({t}^{-});y) {\tilde{\eta }}_{1} (dt,dy) 
  + \int_{{Y}^{2}\setminus {Y}^{2}_{0}} {F}_{2}(t,u({t}^{-}),\vartheta ({t}^{-});y) {\eta }_{1} (dt,dy) , 
\label{E:Boussinesq}
\end{align}
where $t \in [0,T]$, with the incompressibility condition 
\begin{equation} \label{E:Bouss_incomp}
  \diver u =0  
\end{equation}
and with the homogeneous  boundary conditions
\begin{align}  \label{E:Boussinesq_boundary_cond}
  & {u}_{|\partial \ocal } = 0 \quad \text{and} \quad   
  {\vartheta }_{|\partial \ocal }= 0 .
\end{align}

\bigskip  \noindent
The functions $u=u(t,x) = \bigl( {u}_{1}(t,x),...,{u}_{d}(t,x)\bigr) $  and $p=p(t,x)$ are interpreted
as the velocity and the pressure of the fluid. Function $\vartheta =\vartheta (t,x)$ represents the temperature of the fluid  (see \cite{Foias_Manley_Temam_87}) and a given parameter $\kappa >0$ is the coefficient of the thermometric conductivity. 
Here ${f}_{0}, {f}_{2}$ stand for the deterministic external forces, 
${W}_{0}$ and ${W}_{2}$ are cylindrical Wiener processes in Hilbert spaces ${Y}_{W}^{0}$ and ${Y}_{W}^{2}$, respectively, ${\tilde{\eta }}_{0}$ and ${\tilde{\eta }}_{2}$ are compensated time homogeneous Poisson random measures with intensities ${\mu }_{0}$ and ${\mu }_{2}$ on measurable spaces 
$({Y}^{0},{\ycal }^{0})$ and 
$({Y}^{2},{\ycal }^{2})$, respectively. Moreover, ${Y}^{0}_{0}\in {\ycal }^{0}$ and 
${Y}^{2}_{0}\in {\ycal }^{2}$ are such that ${\mu }_{0}({Y}^{0}\setminus {Y}^{0}_{0})<\infty $ and
${\mu }_{2}({Y}^{2}\setminus {Y}^{2}_{0})<\infty $.
The processes ${W}_{0},{W}_{2},{\eta }_{0}, {\eta }_{2}$ are assumed to be independent.

\bigskip  \noindent
The functional setting of the problem \eqref{E:Boussinesq}-\eqref{E:Boussinesq_boundary_cond}
 is analogous to that considered in \cite{Foias_Manley_Temam_87} and \cite{Brzezniak_Motyl_2010}.

\bigskip  \noindent
\bf Function spaces. \rm
The spaces used in the theory of the Boussinesq problem are  products of
spaces used for the Navier-Stokes equations, i.e. ${\vcal }_{0}$, ${H}_{0}$ and ${V}_{0}$ defined
by \eqref{vcal_0}, \eqref{H_0}, \eqref{V_0}  
and spaces used in the theory of the heat equation (spaces denoted with the subscript 2).
They are
\begin{align*}
{\vcal }_{2} &= {\ccal }^{\infty }_{c} (\ocal , \rzecz ) , \\
   {V}_{2} &= {H}^{1}_{0}(\ocal ,\rzecz )
    := \text{the closure of ${\vcal }_{2}$ in ${H}^{1}(\ocal ,\rzecz )$} ,  \\
   {H}_{2} &= {L}^{2}(\ocal ,\rzecz ) .
\end{align*}
In the space ${V}_{2}$, we consider the inner product 
\begin{equation} \label{E:il_sk_Dirichlet_2}
   \dirilsk{\theta }{\vartheta }{2} :=\ilsk{\theta }{\vartheta }{{L}^{2}}
  + \kappa \ilsk{\nabla \theta }{\nabla \vartheta }{{L}^{2}} 
\end{equation}
and the norm $\norm{\vartheta }{{V}_{2}}{2}
 := \norm{\vartheta }{{L}^{2}}{2} + \norm{\nabla \vartheta }{{L}^{2}}{2}$, where
 $\vartheta \in {V}_{2}$.    
Finally, we define
\begin{equation}  \label{E:Boussinesq_H_V}
   \vmath  := {V}_{0} \times {V}_{2} , \qquad \hmath  := {H}_{0} \times {H}_{2} , \qquad 
   {\vmath }^{\prime }:= \text{the dual space of $\vmath $}
\end{equation}
with the following inner products
\begin{align*}
  \ilsk{\phi }{\psi }{\hmath } & = 
  \ilsk{\phi }{\psi }{}:= \ilsk{u}{v}{0} + \ilsk{\theta }{\vartheta }{2}
\qquad \text{for all} \qquad \phi =(u,\theta ), \, \, \,  \psi =(v,\vartheta ) \in \hmath    \\
 \dirilsk{\phi }{\psi }{\vmath } & = 
\dirilsk{\phi }{\psi }{} :=  \dirilsk{u}{v}{0} + \dirilsk{\theta }{\vartheta }{2}
 \qquad \text{for all} \qquad \phi , \psi \in \vmath  .
\end{align*} 
We have
$
      \vmath \subset \hmath   \subset {\vmath }^{\prime } ,
$
where the embeddings are compact and each space is dense in the following one.

\bigskip  \noindent
\bf The operators ${\acal }_{2}$ and $\acal $.  \rm 
We define the operators ${\acal }_{2}$ and   $\acal $ by 
\begin{align}
 \dual{{\acal }_{2}\theta }{\vartheta }{} &:=  \dirilsk{\theta }{\vartheta }{2}, 
\qquad \theta , \vartheta  \in {V}_{2}  \nonumber  \\
 \dual{\acal \phi }{\psi }{} & := \dual{{\acal }_{0}u}{v}{} 
   + \dual{{\acal }_{2}\theta }{\vartheta }{},  \qquad \phi ,\psi \in \vmath ,
  \label{E:Boussinesq_Acal}
\end{align}   
where ${\acal }_{0}$ is defined by \eqref{E:Acal_0}.
It is clear that ${\acal }_{2} \in \lcal ({V}_{2},{V}_{2}')$ and thus $\acal \in \lcal (\vmath ,{\vmath }^{\prime })$.
Let us notice that 
\begin{equation}
\dual{\acal \phi }{\psi }{} = \dirilsk{\phi }{\psi }{V}, \qquad \phi ,\psi \in \vmath  . 
\end{equation}

\bigskip  \noindent
\bf The  the operator ${B}_{2}$\rm .
Let us consider the following tri-linear form $b$  defined by \eqref{E:form_b} with ${d}_{1}:=1$.
If we define a bilinear map ${B}_{2}$ by
${B}_{2}(u,\vartheta ):={b}_{}(u,\vartheta , \cdot )$, then by \eqref{E:B_estimate_H^1}, we infer that ${B}_{2}(u,\vartheta )  \in {V}_{2}'$
and that the following estimate holds
\begin{align}  \label{E:estimate_B2}
 |{B}_{2}(u,\vartheta ) {|}_{{V}_{2}'}  
   \le c  \norm{u }{0}{}\norm{\vartheta  }{2}{},
\qquad  u \in {V}_{0}, \quad \vartheta \in {V}_{2} .
\end{align}
In particular,
the mapping ${B}_{2}: {V}_{0} \times {V}_{2} \to {V}_{2}' $ is bilinear and continuous.
Furthermore, by \eqref{E:antisymmetry_B_H^1}
\begin{align}  \label{E:antisymmetry_B_2}
    \dual{{B}_{2}(u,\vartheta )}{\theta }{} = - \dual{{B}_{2}(u,\theta )}{\vartheta }{} , 
\qquad   u \in {V}_{0}, \quad \vartheta ,\theta \in {V}_{2} .
\end{align}
and, in particular, 
\begin{align}  \label{E:wirowosc_b2}
\dual{{B}_{2}(u,\vartheta )}{\vartheta }{} =0,
  \qquad u \in {V}_{0}, \quad \vartheta \in {V}_{2}.
\end{align}
If $m > \frac{d}{2} +1 $,  the operator ${B}_{2}$ can be extended 
to 
$
    {B}_{2} : {H}_{0} \times {H}_{2} \to {H}^{-m}_{0}(\ocal , \rzecz ) ,
$
and  by \eqref {E:B_estimate_H^m} satisfies the following inequality
\begin{align}  \label{E:estimate_B2_ext}
 |{B}_{2}(u,\vartheta  ) {|}_{{H}^{-m}_{0}(\ocal , \rzecz )} 
\le c {|u |}_{0}  {|\vartheta |}_{2} ,\qquad u \in {H}_{0}, \, \, \vartheta  \in {H}_{2} .
\end{align}
Using the above notation, the Boussinesq problem can be written as a system of the following two equations
\begin{align}
 & du(t) + \bigl[  {\acal }_{0} u + {B}_{0}(u,u) - \vartheta {e}_{d} \bigr] \, dt 
   = {f}_{0} (t) \, dt  
  + {G}_{0}(t,u(t),\vartheta (t)) \, d{W}_{0}(t) \nonumber \\
 & \quad   + \int_{{Y}^{0}_{0}} {F}_{0}(t,u({t}^{-}),\vartheta ({t}^{-});y) \, {\tilde{\eta }}_{0} (dt,dy)
 + \int_{{Y}^{0} \setminus {Y}^{0}_{0}} {F}_{0}(t,u({t}^{-}),\vartheta ({t}^{-});y) 
  \, {\eta }_{0} (dt,dy) \label{E:Benard_1}  \\
 & d\vartheta (t)  + \bigl[   {\acal }_{2} \vartheta  + {B}_{2}(u,\vartheta ) -  {u}_{d} \bigr] \, dt 
   = {f}_{2} (t) \, dt 
 + {G}_{2}(t,u(t),\vartheta (t)) \, d{W}_{2}(t) \nonumber \\
 & \quad  + \int_{{Y}^{2}_{0}} {F}_{2}(t,u({t}^{-}),\vartheta ({t}^{-});y) \, {\tilde{\eta }}_{2} (dt,dy) + \int_{{Y}^{2} \setminus {Y}^{2}_{0}} {F}_{2}(t,u({t}^{-}),\vartheta ({t}^{-});y) 
   \, {\eta }_{2} (dt,dy).  \label{E:Benard_2}
\end{align}
with the initial conditions
\begin{equation}
   u(0) = {u}_{0}, \qquad \vartheta (0)= {\vartheta }_{0}.   \label{E:Benard_init_cond}
\end{equation}

\bigskip  \noindent
\bf Weak formulation of problem \eqref{E:Boussinesq}\rm .
Let  $\hmath $ and $\vmath $ be the Hilbert spaces defined by \eqref{E:Boussinesq_H_V}.
\begin{itemize}
\item Let ${f}_{0} \in {L}^{2}(0,T; {V}_{0}')$, ${f}_{2} \in {L}^{2}(0,T; {V}_{2}^{\prime })$,
${u}_{0} \in {H}_{0}$ and ${\vartheta }_{0} \in {H}_{2}$ be given
and let
\[
    f:=({f}_{0},{f}_{2}) , \qquad 
    {\phi }_{0} := ({u}_{0},{\vartheta }_{0} ). 
\]
Then $f \in {L}^{2}(0,T; {\vmath }^{\prime })$ and ${\phi }_{0} \in \hmath $.
\item Let ${Y}_{W}:={Y}^{0}_{W} \times {Y}^{2}_{W}$ and let $W(t) = ({W}_{0}(t), {W}_{2}(t))$. Then $W$ is a cylindrical Wiener process on the space ${Y}_{W}$.
Moreover, let ${G}_{0}: [0,T] \times {V}_{0} \times {V}_{2} \to \lhs ({Y}^{0}_{W},{H}_{0})$ and 
${G}_{2}:[0,T] \times {V}_{0} \times {V}_{2} \to \lhs ({Y}^{2}_{W},{H}_{2})$
be given.  Let  us define the map $G$ by the formula
\begin{equation}
    G (\phi )h := ({G}_{0}(u ){h}_{0}, {G}_{1}(\vartheta ){h}_{1}) , 
\end{equation}
where $\phi = (u,\vartheta ) \in \vmath $, $h :=({h}_{1},{h}_{2}) \in {Y}_{W}$ and
$t \in [0,T]$.
Then $G:\vmath \to \lhs ({Y}_{W}, \hmath )$.
\item 
Let $Y:={Y}^{0} \times {Y}^{2}$. Then $(Y,\ycal )$, where $\ycal := {\ycal }^{0} \otimes {\ycal }^{2}  $ is a measurable space and  $\eta (dt,dy):= ({\eta }_{0}(dt,d{y}_{0}),{\eta }_{2}(dt,d{y}_{2})) $ is a time homogeneous Poisson random measure on $(Y,\ycal )$ with the intensity measure $\mu := {\mu }_{0} \otimes {\mu }_{2}$. Moreover, ${Y}_{0}:= {Y}^{0}_{0}\times {Y}^{2}_{0}$.
Let ${F}_{0}:[0,T] \times {H}_{0} \times {H}_{2} \times {Y}_{0} \to {H}_{0}$ and  
${F}_{2}:[0,T] \times {H}_{2} \times {H}_{2} \times {Y}_{2} \to {H}_{2}$ be given and let us define the map $F$ by the formula
\begin{equation}
 F(t,\phi ;y) := ({F}_{0}(t,u,\vartheta ;{y}_{0}), {F}_{1}(t,u,\vartheta ;{y}_{1})), 
\end{equation}
where $\phi = (u,\vartheta ) \in \hmath $, $ y:=({y}_{0},{y}_{2}) \in Y $ and 
$ t \in [0,T] $.
Then
$F:[0,T]\times \hmath \times Y \to \hmath $.
\end{itemize}

\bigskip \noindent
Using the operator $\acal $  defined by  \eqref{E:Boussinesq_Acal} and putting
\begin{align}
    \bcal  (\phi ,\psi ) &:= \bigl( {B}_{0}(u,v),{B}_{2}(u,\theta ) \bigr) , 
\qquad \phi =(u,\vartheta ) ,\psi =(v,\theta )  \in \vmath  , \label{E:Boussinesq_Bcal} \\
    \rcal (\phi)& :=(-\vartheta {e}_{d}, - {u}_{d}), \qquad \phi \in \vmath ,
   \label{E:Boussinesq_Rcal}
\end{align}
by \eqref{E:Benard_1}, \eqref{E:Benard_2} and \eqref{E:Benard_init_cond}
we obtain the following equation for $\phi = (u,\vartheta )$  
\begin{align} 
 & d\phi (t) + \bigl[  \acal \phi  + \bcal  (\phi ) + \rcal  \phi  \bigr] \, dt 
   = {f}_{} (t) \, dt 
  + G (t,\phi (t)) \, dW(t) \nonumber \\
 & \quad + \int_{{Y}_{0}} F(t,\phi({t}^{-});y) \, \tilde{\eta } (dt,dy)
    + \int_{Y\setminus {Y}_{0}} F(t,\phi({t}^{-});y) \, \eta  (dt,dy)
  \label{E:Benard}
\end{align}  
with the initial condition
\begin{equation} \label{E:Boussinesq_ini_cond}
    \phi (0) = {\phi }_{0}.
\end{equation}

\bigskip  \noindent
W will now be concerned with some properties of the maps $\bcal $ and $\rcal $ defined by
\eqref{E:Boussinesq_Bcal} and \eqref{E:Boussinesq_Rcal}, respectively.
Let  ${U}_{m}$ be the space defined by \eqref{E:U_m}
and let us define 
\begin{equation}
{\umath }_{m}:= {U}_{m} \times {H}^{m}_{0}(\ocal ,\rzecz ). \label{E:Boussinesq_U_m}
\end{equation}

\begin{lemma}  \label{L:Boussinesq_estimate_B}
\rm (Properties of the map $\bcal $) \it 
\begin{description}
\item[(1)] There exists a constant ${c}_{2}>0 $ such that
\begin{align*}
  | \bcal (\phi ,\psi ) {|}_{{\vmath }^{\prime }} 
  \le {c}_{1} \norm{\phi }{\vmath }{}\norm{\psi }{\vmath }{} , \qquad \phi ,\psi \in \vmath  .
\end{align*}
Moreover,
\[
     \dual{\bcal (\phi, \psi)}{\chi }{} = - \dual{B(\phi, \chi )}{\psi }{},
     \qquad \phi ,\psi , \chi \in \vmath  .
\]
\item[(2)] If $m > \frac{d}{2}+1$, then
$\bcal $ can be extended to the bilinear mapping from $\hmath  \times \hmath $ to 
${\umath }_{m}^{\prime }$.
Moreover, there exists a constant ${c}_{2}>0 $ such that
\begin{align*}
  | \bcal (\phi ,\psi ) {|}_{{\umath }_{m}^{\prime }} 
  \le {c}_{2} {|\phi |}_{\hmath }^{} {|\psi |}_{\hmath }^{} , \qquad \phi , \psi \in \hmath  .
\end{align*}
\item[(3)] The mapping $\bcal $ is locally Lipschitz continuous, i.e. for every $r>0$ there exists a constant ${L}_{r}$
such that 
\begin{equation*}
   \bigl| \bcal (\phi ) - \bcal (\tilde{\phi }) {\bigr| }_{{\vmath }^{\prime }} 
 \le {L}_{r} \norm{\phi - \tilde{\phi }}{\vmath }{} ,
   \qquad \phi , \tilde{\phi } \in \vmath  , \quad \norm{\phi }{\vmath }{}, 
  \norm{\tilde{\phi }}{\vmath }{} \le r  .
\end{equation*}
\end{description}
\end{lemma}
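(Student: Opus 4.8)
The plan is to exploit the fact that $\bcal $ is assembled diagonally from the two maps ${B}_{0}$ and ${B}_{2}$, whose properties have already been established for the Navier-Stokes and heat components, and to transfer all estimates componentwise through the product structure $\vmath = {V}_{0}\times {V}_{2}$, $\hmath = {H}_{0}\times {H}_{2}$ and ${\umath }_{m}={U}_{m}\times {H}^{m}_{0}(\ocal ,\rzecz )$, the latter defined by \eqref{E:Boussinesq_U_m}. The preliminary observation, used throughout, is that since these are orthogonal direct sums of Hilbert spaces the dual pairing splits as $\dual{\bcal (\phi ,\psi )}{\chi }{}=\dual{{B}_{0}(u,v)}{w}{}+\dual{{B}_{2}(u,\theta )}{\zeta }{}$ for $\phi =(u,\vartheta )$, $\psi =(v,\theta )$, $\chi =(w,\zeta )$, while the dual norm splits as $|\bcal (\phi ,\psi ){|}_{{\vmath }^{\prime }}^{2}=|{B}_{0}(u,v){|}_{{V}_{0}^{\prime }}^{2}+|{B}_{2}(u,\theta ){|}_{{V}_{2}^{\prime }}^{2}$, and analogously with ${\umath }_{m}^{\prime }$ in place of ${\vmath }^{\prime }$.

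For assertion (1) I would first estimate each summand by \eqref{E:estimate_B_0} and \eqref{E:estimate_B2}, obtaining $|\bcal (\phi ,\psi ){|}_{{\vmath }^{\prime }}^{2}\le c^{2}\norm{u}{0}{2}\bigl( \norm{v}{0}{2}+\norm{\theta }{2}{2}\bigr) $; since $\norm{u}{0}{}\le \norm{\phi }{\vmath }{}$ and $\norm{v}{0}{2}+\norm{\theta }{2}{2}\le \norm{\psi }{\vmath }{2}$, the continuity bound follows with ${c}_{1}=c$. The antisymmetry is then immediate: applying \eqref{E:antisymmetry_B_0} to the velocity component and \eqref{E:antisymmetry_B_2} to the temperature component of the split pairing yields $\dual{\bcal (\phi ,\psi )}{\chi }{}=-\dual{{B}_{0}(u,w)}{v}{}-\dual{{B}_{2}(u,\zeta )}{\theta }{}=-\dual{\bcal (\phi ,\chi )}{\psi }{}$.

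For assertion (2) I would repeat the splitting at the level of $\hmath \times \hmath $ with values in ${\umath }_{m}^{\prime }={U}_{m}^{\prime }\times {H}^{-m}_{0}(\ocal ,\rzecz )$, now invoking the extended estimates \eqref{E:estimate_B_0_ext} and \eqref{E:estimate_B2_ext} valid for $m>\frac{d}{2}+1$; this gives $|\bcal (\phi ,\psi ){|}_{{\umath }_{m}^{\prime }}^{2}\le c^{2}{|u|}_{0}^{2}\bigl( {|v|}_{0}^{2}+{|\theta |}_{2}^{2}\bigr) \le c^{2}{|\phi |}_{\hmath }^{2}{|\psi |}_{\hmath }^{2}$, which is the claimed bound with ${c}_{2}=c$ and simultaneously exhibits the bilinear extension $\bcal :\hmath \times \hmath \to {\umath }_{m}^{\prime }$. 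Assertion (3) requires no new input on ${B}_{0}$ or ${B}_{2}$: exactly as in assertion (2) of Lemma \ref{L:MHD_properties_B}, I would write $\bcal (\phi )-\bcal (\tilde{\phi })=\bcal (\phi ,\phi -\tilde{\phi })+\bcal (\phi -\tilde{\phi },\tilde{\phi })$ and apply the bilinear bound from assertion (1), so that for $\norm{\phi }{\vmath }{},\norm{\tilde{\phi }}{\vmath }{}\le r$ the Lipschitz constant is ${L}_{r}=2r{c}_{1}$.

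Since every ingredient is already at hand from the Navier-Stokes analysis, I do not expect a genuine obstacle; the only point demanding care is the bookkeeping of the product-space duality—in particular ensuring that the transported-temperature term ${B}_{2}(u,\theta )$ is paired against the temperature test component $\zeta $ rather than against $w$—so that the two componentwise antisymmetry relations combine into the stated identity.
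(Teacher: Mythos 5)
Your proposal is correct and follows essentially the same route as the paper: all three assertions are reduced componentwise through the product structure $\vmath={V}_{0}\times {V}_{2}$, $\hmath={H}_{0}\times{H}_{2}$, ${\umath }_{m}={U}_{m}\times {H}^{m}_{0}(\ocal ,\rzecz )$ to the already-established estimates \eqref{E:estimate_B_0}, \eqref{E:estimate_B2}, \eqref{E:estimate_B_0_ext} and \eqref{E:estimate_B2_ext}. The only (harmless) deviations are in part (3), where you invoke bilinearity of the assembled map $\bcal $ together with the bound from part (1) — exactly as in the paper's Lemma \ref{L:MHD_properties_B}(2), yielding ${L}_{r}=2r{c}_{1}$ instead of the paper's componentwise splitting with ${L}_{r}=2\sqrt{2}rc$ — and in part (1), where you spell out the antisymmetry identity via \eqref{E:antisymmetry_B_0} and \eqref{E:antisymmetry_B_2}, a verification the paper's proof leaves implicit.
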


\begin{proof} 
\bf Ad. (1) \rm
 Let $\phi =(u,\vartheta ) \in \vmath  $ and $\psi =(v,\theta ) \in \vmath  $.
By inequalities \eqref{E:estimate_B_0} and (\ref{E:estimate_B2})
we obtain the following estimates
 \begin{align*}
&|\bcal (\phi ,\psi ) {|}_{{\vmath }^{\prime }}^{2}
    =  \bigl| \bigl(  {B}_{0}(u,v), {B}_{2}(u,\theta ) \bigr) {\bigr| }_{{\vmath }^{\prime }}^{2}
     = \bigl| {B}_{0}(u,v) {\bigr| }_{{V}_{0}^{\prime }}^{2}
       + \bigl|  {B}_{2}(u,\theta )  {\bigr| }_{{V}_{2}^{\prime }}^{2} \\
   &\le {c}^{2} \norm{u}{{V}_{0}}{2} \norm{v}{{V}_{0}}{2}
       + {c}^{2} \norm{u}{{V}_{0}}{2} \norm{\theta }{{V}_{2}}{2}
     = {c}^{2} \norm{u}{{V}_{0}}{2}
        \bigl( \norm{v}{{V}_{0}}{2}+ \norm{\theta }{{V}_{2}}{2}\bigr)  
   \le {c}_{1}\norm{\phi }{\vmath }{2} \norm{\psi }{\vmath }{2},
 \end{align*}
where ${c}_{1} >0$ is a certain constant. This completes the proof of inequality (\ref{E:estimate_B}).

\bigskip  \noindent
\bf Ad. (2) \rm
Let $\phi =(u,\vartheta ) \in \hmath  $ and $\psi =(v,\theta ) \in \hmath  $. Then by inequalities
\eqref{E:estimate_B_0_ext} and (\ref{E:estimate_B2_ext}) we have the following estimates
\begin{align*}
{|\bcal (\phi ,\psi ) |}_{{\umath }_{m}^{\prime }}^{2}
   &= {|{B}_{0}(u,v) |}_{{U}_{m}^{\prime }}^{2}
       + {| {B}_{2}(u,\theta )|}_{{H}^{-m}(\ocal )}^{2} \\
   &\le {c}^{2} {|u|}_{0}^{2} {|v|}_{0}^{2}
        +{c}^{2} {|u|}_{0}^{2} {|\theta |}_{2}^{2}
    \le {c}_{2} {|\phi |}_{\hmath }^{2} {|\psi |}_{\hmath }^{2}
\end{align*}
for some constant ${c}_{2} >0$. The proof of inequality (\ref{E:estimate_B_ext}) is thus complete.

\bigskip  \noindent
\bf Ad. (3) \rm Let us fix $r>0 $ and let $\phi =(u,\vartheta ),\tilde{\phi }=(\tilde{u}, \tilde{\vartheta }) \in \vmath  $ be such that 
$ \norm{\phi }{\vmath }{}, \norm{\tilde{\phi }}{\vmath }{} \le r$. We have
\begin{align*}
\bigl| \bcal (\phi ) - \bcal (\tilde{\phi }) {\bigr| }_{{\vmath }^{\prime }}^{2}
 &= {\bigl| \bigl( {B}_{0}(u,u), {B}_{2}(u ,\vartheta ) \bigr)
 - \bigl( {B}_{0}(\tilde{u},\tilde{u}), {B}_{2}(\tilde{u },\tilde{\vartheta }) \bigr) 
\bigr|}_{{\vmath }^{\prime }}^{2}  \\
& = \bigl| {B}_{0}(u,u) -{B}_{0}(\tilde{u},\tilde{u}) {\bigr| }_{{V}_{0}^{\prime }}^{2}
 + \bigl| {B}_{2}(u,\vartheta )-{B}_{2}(\tilde{u},\tilde{\vartheta }) {\bigr| }_{{V}_{2}^{\prime }}^{2} .
\end{align*}
We will estimate each term of the right-hand side of the above equality. By inequality 
\eqref{E:estimate_B_0} we have the following  estimates
\begin{align*}
&{\bigl| {B}_{0}(u,u)-{B}_{0}(\tilde{u},\tilde{u}) \bigr| }_{{V}_{0}^{\prime }}
  \le {\bigl| {B}_{0}(u,u-\tilde{u})  \bigr| }_{{V}_{0}^{\prime }}
   + {\bigl| {B}_{0}(u-\tilde{u},\tilde{u}) \bigr| }_{{V}_{0}^{\prime }} \\
 & \le  c \norm{u}{{V}_{0}}{} \norm{u-\tilde{u}}{{V}_{0}}{}
   + c \norm{u-\tilde{u}}{{V}_{0}}{} \norm{\tilde{u}}{{V}_{0}}{} 
  \le 2r  c  \cdot \norm{u-\tilde{u}}{{V}_{0}}{}
  \le 2r  c \cdot \norm{\phi-\tilde{\phi }}{\vmath }{} .
\end{align*}
By inequality \eqref{E:estimate_B2} we obtain the following  estimates
\begin{align*}
&{\bigl| {B}_{2}(u,\vartheta ) -  {B}_{2}(\tilde{u},\tilde{\vartheta }) \bigr| }_{{V}_{2}^{\prime }}
\le  {\bigl| {B}_{2}(u,\vartheta -\tilde{\vartheta })  \bigr| }_{{V}_{2}^{\prime }}
   + {\bigl| {B}_{2}(u-\tilde{u},\tilde{\vartheta }) \bigr| }_{{V}_{2}^{\prime }} \\
&\le c \norm{u}{{V}_{1}}{} \norm{\vartheta -\tilde{\vartheta }}{{V}_{2}}{}
   + c \norm{u-\tilde{u}}{{V}_{1}}{} \norm{\tilde{\vartheta }}{{V}_{2}}{}  
\le 2r c  \cdot \norm{\phi -\tilde{\phi }}{\vmath }{} .
\end{align*}
Hence
\begin{align*}
 {\bigl| \bcal (\phi ) - \bcal (\tilde{\phi }) \bigr| }_{{\vmath }^{\prime }}^{2}
 \le 8{r}^{2}{c}^{2} \norm{\phi-\tilde{\phi }}{\vmath }{2}.
\end{align*}
Thus the Lipschitz condition holds with ${L}_{r}= 2\sqrt{2}rc$.
The proof of Lemma is thus complete.
\end{proof}

\begin{lemma} \label{L:Boussinesq_estimate_R}
Operator $\rcal $ defined by \eqref{E:Boussinesq_Rcal} has the following properties:
\begin{description}
\item[(1)] For every $\phi \in \hmath $, $\rcal  \phi \in {\vmath }^{\prime }$ and there exists a constant $c>0$ such that
\begin{equation} \label{E:estimate_R_1}
       | \rcal \phi {|}_{{\vmath }^{\prime }} \le c|\phi {|}_{\hmath }  .
\end{equation}
\item[(2)] For every $\phi \in \vmath  $:
\begin{equation} \label{E:estimate_R_2}
     \dual{\rcal \phi }{\phi }{} \ge - |\phi {|}_{\hmath }^{2}.
\end{equation}
\end{description}
\end{lemma}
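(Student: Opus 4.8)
The plan is to unwind the definition \eqref{E:Boussinesq_Rcal} and reduce both assertions to elementary ${L}^{2}$ estimates. Writing $\phi = (u, \vartheta)$ with $u = ({u}_{1}, \dots , {u}_{d})$, the definition gives $\rcal \phi = (-\vartheta {e}_{d}, -{u}_{d})$, so that for any test element $\psi = (v, \theta) \in \vmath$ the pairing reads
\[
  \dual{\rcal \phi}{\psi}{} = -\ilsk{\vartheta {e}_{d}}{v}{0} - \ilsk{{u}_{d}}{\theta}{2} = -\int_{\ocal} \vartheta \, {v}_{d} \, dx - \int_{\ocal} {u}_{d} \, \theta \, dx .
\]
The only point deserving care is that the velocity component $-\vartheta {e}_{d}$ of $\rcal \phi$ need not be divergence-free, and hence a priori does not lie in ${H}_{0}$; I would record that it is to be understood through the orthogonal (Leray--Helmholtz) projection $\Pi : {L}^{2}(\ocal ,\rd) \to {H}_{0}$, which is invisible in the displayed pairing because every test field $v \in {V}_{0}$ is already solenoidal. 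In particular $\rcal$ is in fact an operator into $\hmath = {H}_{0}\times {H}_{2}$, as the abstract framework requires.

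For part (1) I would estimate the functional $\psi \mapsto \dual{\rcal \phi}{\psi}{}$ directly. By the Cauchy--Schwarz inequality applied to the two integrals, together with $\norm{{u}_{d}}{{L}^{2}}{}\le {|u|}_{0}$ and the obvious bounds $\norm{v}{{L}^{2}}{}\le \norm{v}{{V}_{0}}{}$, $\norm{\theta}{{L}^{2}}{}\le \norm{\theta}{{V}_{2}}{}$,
\[
  |\dual{\rcal \phi}{\psi}{}| \le {|\vartheta|}_{2} \norm{v}{{V}_{0}}{} + {|u|}_{0} \norm{\theta}{{V}_{2}}{} \le {|\phi|}_{\hmath} \norm{\psi}{\vmath}{} ,
\]
the last step being Cauchy--Schwarz in ${\rzecz }^{2}$ together with the product structure of the $\vmath$- and $\hmath$-norms. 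Taking the supremum over $\norm{\psi}{\vmath}{}\le 1$ yields $\rcal \phi \in {\vmath }^{\prime }$ and inequality \eqref{E:estimate_R_1} (with constant $c = 1$).

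For part (2) I would simply set $\psi = \phi$ in the pairing above, so that the two summands coincide:
\[
  \dual{\rcal \phi}{\phi}{} = -2 \int_{\ocal} {u}_{d}\, \vartheta \, dx .
\]
Then Cauchy--Schwarz followed by Young's inequality $2ab \le {a}^{2} + {b}^{2}$ gives
\[
  \Bigl| 2 \int_{\ocal} {u}_{d}\, \vartheta \, dx \Bigr| \le 2 \norm{{u}_{d}}{{L}^{2}}{} \norm{\vartheta}{{L}^{2}}{} \le 2 {|u|}_{0} {|\vartheta|}_{2} \le {|u|}_{0}^{2} + {|\vartheta|}_{2}^{2} = {|\phi|}_{\hmath}^{2} ,
\]
whence $\dual{\rcal \phi}{\phi}{} \ge -{|\phi|}_{\hmath}^{2}$, which is \eqref{E:estimate_R_2}. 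Note that this simultaneously verifies assumption (R.1) for the Boussinesq system with ${c}_{3} = 1$.

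The calculations are entirely routine; the single genuine subtlety — and the step I would set out most carefully — is the functional-analytic reading of the buoyancy coupling $-\vartheta {e}_{d}$, namely that it must be projected onto the solenoidal space ${H}_{0}$ but that this projection drops out of every pairing against the divergence-free test fields constituting $\vmath$. Once this is in place, both bounds follow at once from Cauchy--Schwarz and Young's inequality, exactly as above.
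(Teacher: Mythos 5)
Your proof is correct and follows essentially the same route as the paper: both parts reduce to elementary Cauchy--Schwarz and Young estimates, the only (immaterial) difference being that the paper bounds $|\rcal \phi | \le |\phi |$ pointwise and applies Cauchy--Schwarz once, while you split the pairing into its two components before estimating. One caveat: your digression on the Leray--Helmholtz projection is unnecessary and slightly mischaracterizes the framework --- assumption (R.1) only requires $\rcal $ to map $\hmath $ into ${\vmath }^{\prime }$, not into $\hmath $, so $\rcal \phi $ is simply the functional $\psi \mapsto \int_{\ocal }(\rcal \phi )\cdot \psi \, dx$ and no projection onto ${H}_{0}$ is needed.
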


\begin{proof} 
To prove the first part of the statement
let $\phi =(u,\vartheta ) \in \hmath  $ and $\psi =(v,\theta ) \in \vmath $. Since
\[
{|\rcal \phi |}^{2} = {|(-\vartheta {e}_{d}, - {u}_{d})|}^{2} = {\vartheta }^{2}+{u}_{d}^{2}
\le {|\phi |}^{2} ,
\]
we have the following estimates
\begin{align*}
 & \bigl| \int_{\ocal } (\rcal  \phi ) \cdot \psi \, dx  \bigr|
  \le \int_{\ocal } |\rcal \phi | \, |\psi | \, dx
    \le \int_{\ocal } {|\phi |}^{} \, |\psi | \, dx \\
   & \le \Bigl( \int_{\ocal } {|\phi |}^{2} \, dx {\Bigr) }^{\frac{1}{2}}
    \Bigl( \int_{\ocal } {|\psi |}^{2} \, dx {\Bigr) }^{\frac{1}{2}}  
    ={|\phi |}_{\hmath }^{} {|\psi |}_{\hmath }^{} \le c {|\phi |}_{\hmath }^{}  \norm{\psi }{\vmath }{},
\end{align*}
where $c>0$ is a certain constant. Thus $\rcal  \phi \in {\vmath }^{\prime }$ and  inequality \eqref{E:estimate_R_1} holds.
Let us move to the second part of the statement. Let $\phi =(u,\vartheta ) \in \vmath  $.
Since
$
 (\rcal  \phi )\cdot \phi  = (-\vartheta {e}_{d}, - {u}_{d}) \cdot (u,\vartheta ) = -2 {u}_{d} \vartheta
$
and $2 {u}_{d}\vartheta   \le {|\phi |}^{2}$, we infer that
\begin{align*}
 \dual{\rcal  \phi }{\phi }{} = \int_{\ocal } (\rcal \phi ) \cdot \phi \, dx
    \ge -\int_{\ocal } {|\phi |}^{2} \, dx =- {|\phi |}_{\hmath }^{2}.
\end{align*}
This completes the proof of inequality \eqref{E:estimate_R_2}
and the proof of the lemma.
\end{proof}


\bigskip  \noindent
\bf Solution of the Boussinesq equations.  \rm 
We apply the abstract framework with the spaces
$\hmath $, $\vmath  $ and 
${\vmath }_{\ast }:= {\umath }_{m}$ with $m> \frac{d}{2}+1$,
defined by \eqref{E:Boussinesq_H_V} and \eqref{E:Boussinesq_U_m}, respectively, and the  maps
$\acal $, $\bcal $ and $\rcal $  defined by \eqref{E:Boussinesq_Acal},  
 \eqref{E:Boussinesq_Bcal} and \eqref{E:Boussinesq_Rcal}, respectively.
By  Lemma \ref{L:Boussinesq_estimate_B}, the map ${\bcal }_{}$ satisfies  conditions (B.1)-(B.4).
By Lemma \ref{L:B_conv_aux}, the map $\bcal $ satisfies assumption (B.5).
By  Lemma \ref{L:Boussinesq_estimate_R}, the operator $\rcal $ satisfies condition (R.1).

\bigskip
\begin{definition}  \rm  \label{D:Boussinesq}
 \bf A martingale solution \rm of the problem 
\eqref{E:Boussinesq}-\eqref{E:Boussinesq_boundary_cond} 
is a system $(\bar{\mathfrak{A}} , \bar{\eta }, \bar{W}, \bar{\phi })$, where
$\bigl( \bar{\mathfrak{A}}, \bar{\eta }, \bar{W}\bigr) $ is as in Definition \ref{D:solution}
and 
$\bar{\phi }: [0,T] \times \bar{\Omega }\to {\hmath }_{}$ is a predictable process with $\bar{\p } $ - a.e. paths
\begin{equation*}
  \bar{\phi }(\cdot , \omega ) \in \dcal \bigl( [0,T], {{\hmath }_{}}_{w} \bigr) 
   \cap {L}^{2}(0,T;{\vmath }_{} )
\end{equation*}
such that  for all $ t \in [0,T] $ and all $ \psi \in \vmath  $ the following identity holds 
$\bar{\p }$-a.s.
\begin{align*}
& \ilsk{\bar{\phi } (t)}{\psi }{} +  \int_{0}^{t} \dual{ \acal \bar{\phi }(s)}{\psi }{} \, ds
  +\int_{0}^{t} \dual{{\bcal }_{}(\bar{\phi } (s),\bar{\phi }(s))}{\psi }{} \, ds 
  +\int_{0}^{t} \dual{{\rcal }_{}(\bar{\phi } (s))}{\psi }{} \, ds \\
 & = \ilsk{{\phi }_{0}}{\psi }{} +  \int_{0}^{t} \dual{f(s)}{\psi }{} \, ds
 + \Dual{\int_{0}^{t} {G}_{}(\bar{\phi } (s))\, d\bar{W}(s)}{\psi }{} \\
& \quad + \int_{0}^{t} \int_{{Y}_{0}} \ilsk{{F}_{}(s,\bar{\phi } ({s}^{-});y)}{\psi }{\hmath } \,  \tilde{\bar{\eta }}(ds,dy)
 + \int_{0}^{t} \int_{Y \setminus {Y}_{0}} \ilsk{{F}_{}(s,\bar{\phi } ({s}^{-});y)}{\psi }{\hmath } \,  \bar{\eta }(ds,dy).
\end{align*}
\end{definition}
\noindent
Applying Theorem \ref{T:existence}, we obtain the following result about 
the existence of the martingale solution of the Boussinesq problem.

\begin{cor} \label{C:existence_Boussinesq} For every
${\phi }_{0}\in {\hmath }_{}$, ${f}_{}\in {L}^{2}(0,T; {\vmath }^{\prime })$,
 ${G}_{}: {\vmath }_{} \to \lhs (Y,{\hmath }_{})$ satisfying conditions (G.1)-(G.3)
 and $F:[0,T]\times \hmath  \times Y \to \hmath  $ satisfying conditions (F.1)-(F.3)
 there exists a martingale solution $(\bar{\mathfrak{A}} , \bar{\eta }, \bar{W}, \bar{\phi })$,
 where $\bar{\phi }=(\bar{u},\bar{\vartheta })$,
 of  problem 
\eqref{E:Boussinesq}-\eqref{E:Boussinesq_boundary_cond} such that
\[
  \bar{\e } \Bigl[ 
\sup_{t \in [0,T]} ({|\bar{u}(t)|}_{{H}_{0}}^{2} + {| \bar{\vartheta }(t)|}_{{H}_{2}}^{2})
 + \int_{0}^{T} (\norm{\bar{u}(t)}{{V}_{0}}{2} +\norm{\bar{\vartheta }(t)}{{V}_{2}}{2}) \, dt
  \bigr]  < \infty .
\]
\end{cor}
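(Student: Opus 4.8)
The plan is to recognise Corollary~\ref{C:existence_Boussinesq} as a direct instance of the abstract existence result Theorem~\ref{T:existence}, so that the whole argument reduces to checking that the concrete Boussinesq data fit the abstract assumptions and then reading off the claimed bound from the product structure of the spaces. Concretely, I would work with the identifications $\hmath := H_0 \times H_2$ and $\vmath := V_0 \times V_2$ from \eqref{E:Boussinesq_H_V}, the space $\vmath_\ast := \umath_m$ with $m > \frac{d}{2}+1$ from \eqref{E:Boussinesq_U_m}, and the operators $\acal$, $\bcal$, $\rcal$ given by \eqref{E:Boussinesq_Acal}, \eqref{E:Boussinesq_Bcal} and \eqref{E:Boussinesq_Rcal}; in this way the system \eqref{E:Boussinesq}--\eqref{E:Boussinesq_boundary_cond} is precisely the abstract equation \eqref{E:Benard}.

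First I would dispatch the easy assumptions. Condition (A.1) is immediate, since $\dual{\acal \phi}{\psi}{} = \dirilsk{\phi}{\psi}{}$ holds by the very definition \eqref{E:Boussinesq_Acal}. Conditions (B.1)--(B.4) for $\bcal$ are exactly the content of Lemma~\ref{L:Boussinesq_estimate_B}, and condition (R.1) for $\rcal$ is exactly Lemma~\ref{L:Boussinesq_estimate_R}. The standing hypotheses (C.1), (F.1)--(F.3) and (G.1)--(G.3) are assumed in the statement of the corollary, so nothing has to be checked there.

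The single point demanding a little care is (B.5), which I would verify in the equivalent sequential form of Remark~\ref{R:B.5_comment} by reducing to Lemma~\ref{L:B_conv_aux} componentwise. If $\phi_n = (u_n, \vartheta_n)$ is bounded in $L^2(0,T;\hmath)$ and $\phi_n \to \phi = (u,\vartheta)$ in $L^2(0,T;L^2_{loc}(\ocal))$, then both $u_n \to u$ and $\vartheta_n \to \vartheta$ hold in $L^2(0,T;L^2_{loc}(\ocal))$, while $(u_n)$ and $(\vartheta_n)$ remain bounded in $L^2(0,T;H_0)$ and $L^2(0,T;H_2)$. Pairing $\bcal(\phi_n) = (B_0(u_n,u_n), B_2(u_n,\vartheta_n))$ against an arbitrary $\psi = (\varphi,\chi) \in \umath_m$, I would apply Lemma~\ref{L:B_conv_aux} to the first component with $d_1 = d$ and test function $\varphi \in U_m \subset H^m_0(\ocal;\rzecz^d)$, and to the second component with $d_1 = 1$ and test function $\chi \in H^m_0(\ocal;\rzecz)$. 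This yields $\lim_{n\to\infty} \int_0^T \dual{\bcal(\phi_n(s)) - \bcal(\phi(s))}{\psi}{}\, ds = 0$ for every $\psi \in \umath_m$, which is the sequential form of (B.5).

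Having verified all abstract hypotheses, Theorem~\ref{T:existence} produces a martingale solution $(\bar{\mathfrak{A}}, \bar{\eta}, \bar{W}, \bar{\phi})$ of \eqref{E:Benard}--\eqref{E:Boussinesq_ini_cond}, and hence of \eqref{E:Boussinesq}--\eqref{E:Boussinesq_boundary_cond}, obeying estimate \eqref{E:u_estimates}. To finish I would write $\bar{\phi} = (\bar{u}, \bar{\vartheta})$ and use the definitions of the inner products on $\hmath$ and $\vmath$ to identify $|\bar{\phi}|_{\hmath}^2 = |\bar{u}|_{H_0}^2 + |\bar{\vartheta}|_{H_2}^2$ and $\norm{\bar{\phi}}{\vmath}{2} = \norm{\bar{u}}{V_0}{2} + \norm{\bar{\vartheta}}{V_2}{2}$, which turns \eqref{E:u_estimates} into the stated estimate. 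No step is a genuine obstacle; the only real work is the componentwise reduction for (B.5) and the matching of the product norms at the very end.
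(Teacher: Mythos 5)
Your proposal is correct and follows exactly the paper's own route: identify $\hmath = H_0\times H_2$, $\vmath = V_0\times V_2$, ${\vmath}_{\ast}={\umath}_m$, invoke Lemma \ref{L:Boussinesq_estimate_B} for (B.1)--(B.4), Lemma \ref{L:B_conv_aux} for (B.5), Lemma \ref{L:Boussinesq_estimate_R} for (R.1), and then apply Theorem \ref{T:existence}. Your componentwise verification of (B.5) --- applying Lemma \ref{L:B_conv_aux} separately to $B_0(u_n,u_n)$ with $d_1=d$ and to $B_2(u_n,\vartheta_n)$ with $d_1=1$ --- merely makes explicit what the paper leaves implicit when it cites that lemma, and the final norm-matching step is likewise the paper's implicit reading of estimate \eqref{E:u_estimates} in the product spaces.
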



\section{Appendix: Proof of Lemma \ref{L:Dubinsky_cadlag_unbound}}

\subsection{The space of c\`{a}dl\`{a}g functions}

\noindent
Let $(\xmath ,\varrho )$ be a separable and complete metric space.
Let $\dmath ([0,T];\xmath )$ denote the space of all $\xmath $-valued \it c\`{a}dl\`{a}g \rm functions defined on $[0,T]$, i.e. the functions  which are right continuous and have left limits at every $t\in [0,T]$ . 
We consider the space $\dmath ([0,T];\xmath )$  endowed with the Skorokhod topology.
This topology is completely metrizable, see \cite{Joffe_Metivier_86}.

\bigskip \noindent
Let us recall the notion of a \it modulus \rm of the function. It plays analogous role in the space $\dmath ([0,T];\xmath )$ as \it the modulus of continuity \rm in the space of continuous functions $\cmath ([0,T];\xmath )$.

\begin{definition}  \rm (see \cite{Metivier_88})
Let $u \in \dmath ([0,T];\xmath )$ and let $ \delta >0 $ be given. A \bf modulus \rm of $u$ is defined by 
\begin{equation}  \label{E:modulus_cadlag}
  {w}_{[0,T],\xmath }(u,\delta ) : = \inf_{{\Pi }_{\delta }} \, \max_{{t}_{i} \in \bar{\omega }} \,
  \sup_{{t}_{i} \le s <t < {t}_{i+1} \le T } \varrho  \bigl( u(t), u(s) \bigr) ,
\end{equation}
where ${\Pi }_{\delta }$ is the set of all increasing sequences 
$
   \bar{\omega } = \{ 0= {t}_{0} < {t}_{1} < ... < {t}_{n} =T  \}
$
with the following property
$
      {t}_{i+1} - {t}_{i}   \ge \delta , \ i=0,1,...,n-1.
$
If no confusion seems likely, we will denote the modulus by ${w}_{[0,T]}(u,\delta )$.
\end{definition}

\bigskip  \noindent
Let us also recall the basic criterion for relative compactness of a subset of the space $\dmath ([0,T];\xmath )$,
see \cite{Joffe_Metivier_86},\cite[Chapter II]{Metivier_88}. 

\begin{theorem} \label{T:cadlag_compactness} \it 
A set $A \subset \dmath ([0,T];\xmath ) $ has compact closure iff it satisfies the following two conditions:
\begin{itemize}
\item[(a) ] there exists a dense subset $J \subset [0,T]$ such that 
for every $ t \in J  $  the set $\{ u(t), \, \, u \in A  \} $ has compact closure in $\xmath $.
\item[(b) ] $ \lim_{\delta \to 0} \, \sup_{u \in A}  \, {w}_{[0,T]}(u,\delta ) =0 $.
\end{itemize}
\end{theorem}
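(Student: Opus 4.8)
The plan is to recognise Theorem~\ref{T:cadlag_compactness} as the Arzel\`a--Ascoli characterisation of relative compactness in the Skorokhod space, with the modulus $w_{[0,T]}(u,\delta)$ playing exactly the role that the modulus of continuity plays for $\cmath([0,T];\xmath)$. Since $\dmath([0,T];\xmath)$ is a complete and separable metric space (as recalled above), relative compactness of $A$ is equivalent to $A$ being totally bounded, and equivalently to sequential relative compactness; I would prove the sufficiency of (a)--(b) via the sequential formulation and the necessity via total boundedness. Throughout I would exploit two elementary features of the modulus: that $\delta\mapsto w_{[0,T]}(u,\delta)$ is nondecreasing and tends to $0$ as $\delta\to 0$ for each fixed $u$, and that a small value $w_{[0,T]}(u,\delta)<\eps$ furnishes a partition $0=t_0<\dots<t_n=T$ of mesh at least $\delta$ on whose half-open blocks $[t_i,t_{i+1})$ the oscillation of $u$ stays below $\eps$.

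For sufficiency I would begin with an arbitrary sequence $(u_k)\subset A$ and fix a countable dense subset $J_0\subset J$. By (a) the set $\{u(t):u\in A\}$ is relatively compact in $\xmath$ for every $t\in J_0$, so a diagonal extraction yields a subsequence (still denoted $(u_k)$) converging in $\xmath$ at each point of $J_0$. The heart of the argument is to upgrade this pointwise convergence on a dense set to Skorokhod convergence, and here (b) is essential: for each $m$ I choose $\delta_m$ with $\sup_{u\in A}w_{[0,T]}(u,\delta_m)<1/m$, which confines every $u_k$ to short excursions on the blocks of an associated partition. Using these partitions one first identifies a candidate limit $u_\ast\in\dmath([0,T];\xmath)$ (defined on $J_0$ by the pointwise limits and extended by right limits) and then constructs the increasing time changes $\lambda_k$ of $[0,T]$ that align the large jumps of $u_k$ with those of $u_\ast$, so that $\sup_t\varrho(u_k(\lambda_k(t)),u_\ast(t))$ and $\sup_t|\lambda_k(t)-t|$ both tend to $0$. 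This gives $u_k\to u_\ast$ in the Skorokhod metric and hence the relative compactness of $A$.

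For necessity I would assume $\bar A$ compact, whence $A$ is totally bounded. Condition (b) follows from $\sup_{u\in A}w_{[0,T]}(u,\delta)\to 0$: given $\eps$, cover $A$ by finitely many Skorokhod $\eps$-balls centred at $u^{(1)},\dots,u^{(N)}$, note that $w_{[0,T]}(u^{(j)},\delta)$ is small for each fixed $u^{(j)}$ once $\delta$ is small, and control the modulus of a general $u\in A$ by that of the nearby $u^{(j)}$ up to a term governed by the Skorokhod distance. For condition (a) I would take $J$ to be the complement of the (at most countable) set of jump times of a countable family $\{u^{(j)}\}_j$ of Skorokhod approximants exhausting $A$; for $t\in J$ the values $u(t)$ of $u\in A$ lie, up to an arbitrarily small error controlled by the Skorokhod distance, in the totally bounded sets of values of the $u^{(j)}$ near $t$, so $\{u(t):u\in A\}$ is totally bounded and hence relatively compact in $\xmath$. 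Being co-countable, such a $J$ is dense.

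I expect the genuine obstacle to be the reconstruction step in the sufficiency proof, namely the explicit construction of the time changes $\lambda_k$ from the block partitions supplied by (b): matching the jump locations of $u_k$ to those of $u_\ast$ while keeping $\lambda_k$ uniformly close to the identity is the technical core, whereas the diagonal extraction from (a) and the modulus estimates in the necessity direction are comparatively routine. As this is the classical criterion of Joffe--M\'etivier \cite{Joffe_Metivier_86} and M\'etivier \cite[Chapter~II]{Metivier_88}, I would ultimately defer to those sources for the detailed verification of this step.
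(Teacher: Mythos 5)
The paper does not actually prove this theorem: it is recalled in Appendix A as the classical relative-compactness criterion for the Skorokhod space, with the proof deferred to \cite{Joffe_Metivier_86} and \cite[Chapter II]{Metivier_88}. Your outline is a faithful sketch of exactly that classical argument (diagonal extraction and time-change reconstruction for sufficiency, total boundedness plus modulus perturbation estimates for necessity), and since you too ultimately defer the technical core to the same sources, your proposal is consistent with the paper's treatment.
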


\subsection{Proof of Lemma \ref{L:Dubinsky_cadlag_unbound}}

\noindent
Let us consider the ball
\[
    \ball := \{ x \in \hmath  : \, \, \, {|x|}_{\hmath } \le r \} .
\]
Let ${\ball }_{w}$ denote the ball $\ball $ endowed with the weak topology.
It is well-known that the ${\ball }_{w}$ is metrizable, see \cite{Brezis}. 
Let us denote by 
 $\dmath ([0,T]; {\ball }_{w}) $    the space of weakly \it c\`{a}dl\`{a}g \rm  functions  
$ u : [0,T] \to \hmath  $ and such that 
\begin{equation}  
     \sup_{t \in [0,T]} {|u(t)|}_{\hmath } \le r  .  \label{E:D([0,T];B_w)} 
\end{equation}          
The space $\dmath ([0,T]; {\ball }_{w})$ is  completely metrizable as well.

\bigskip  \noindent
The following lemma says that any  sequence $(\un ) \subset {L}^{\infty } (0,T;\hmath )$  convergent in 
$\dmath ([0,T];{\umath }^{\prime })$ is also convergent in the space $\dmath ([0,T];{\ball }_{w})$. 

\bigskip
\begin{lemma} (see Lemma 2 in \cite{Motyl_NS_Levy_2012}) \label{L:D(0,T,{hmath }_{w})_conv}
Let ${u}_{n}:[0,T] \to \hmath  $, $n \in \nat $, be functions such that
\begin{itemize}
\item[(i)] $\sup_{n \in \nat } \sup_{s \in [0,T]} {|\un (s)|}_{\hmath } \le r  $,
\item[(ii)] $\un \to u $ in $\dmath ([0,T];{\umath }^{\prime })$.
\end{itemize}
Then  $u, \un \in \dmath ([0,T];{\ball }_{w}) $ and $\un \to u$ in $\dmath ([0,T];{\ball }_{w})$ as $n \to \infty $.
\end{lemma}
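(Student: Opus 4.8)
The plan is to reduce the statement to one topological fact: \textbf{on the ball $\ball$ the weak topology of $\hmath$ coincides with the norm topology inherited from $\umath'$.} First I would prove this. Recall that $\umath$ is dense in $\hmath$ and that, since $\umath \hookrightarrow \hmath$ is compact, the dual embedding $\hmath \cong \hmath' \hookrightarrow \umath'$ is compact as well (the adjoint of a compact operator being compact). If $x_k \in \ball$ and $x_k \to x$ in $\umath'$, then for $h \in \umath$ one has $\ilsk{x_k}{h}{\hmath} = \dual{x_k}{h}{} \to \dual{x}{h}{} = \ilsk{x}{h}{\hmath}$; an $\eps/3$ argument based on $\sup_k |x_k|_\hmath \le r$ and the density of $\umath$ in $\hmath$ extends this to all $h \in \hmath$, so $x_k \rightharpoonup x$ weakly in $\hmath$. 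Conversely, if $x_k \rightharpoonup x$ weakly in $\hmath$ with $x_k \in \ball$, compactness of $\hmath \hookrightarrow \umath'$ forces $x_k \to x$ in $\umath'$. Both topologies are metrizable on $\ball$ (the weak one by \cite{Brezis}), so they coincide, and $(\ball, \|\cdot\|_{\umath'})$ is a compact metric space.

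With this fact, a $\ball$-valued function is c\`{a}dl\`{a}g into $(\ball,\|\cdot\|_{\umath'})$ if and only if it is weakly c\`{a}dl\`{a}g---its one-sided limits automatically remaining in $\ball$, since $\ball$ is weakly compact and the norm is weakly lower semicontinuous. By (i)--(ii) each $u_n$ is $\ball$-valued and c\`{a}dl\`{a}g in $\umath'$, hence $u_n \in \dmath([0,T];\ball_w)$. For the limit $u$ I would use that Skorokhod convergence in $\umath'$ provides increasing homeomorphisms $\lambda_n$ of $[0,T]$ with $\sup_{t}|\lambda_n(t)-t| \to 0$ and $\sup_{t}\|u_n(\lambda_n(t))-u(t)\|_{\umath'} \to 0$. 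For fixed $t$ the points $u_n(\lambda_n(t))$ lie in the weakly compact ball $\ball$; any weakly convergent subsequence has its weak limit carried by the compact embedding to the $\umath'$-limit $u(t)$, so $u_n(\lambda_n(t)) \rightharpoonup u(t)$ in $\hmath$ and $|u(t)|_\hmath \le r$. Thus $u$ is $\ball$-valued and, being c\`{a}dl\`{a}g in $\umath'$, belongs to $\dmath([0,T];\ball_w)$.

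Finally I would upgrade the convergence using the \emph{same} time-changes $\lambda_n$. Let $d_w$ be a metric inducing the weak topology on $\ball$. The identity map $(\ball,\|\cdot\|_{\umath'}) \to (\ball,d_w)$ is a continuous bijection between compact metric spaces, hence uniformly continuous, so $\sup_t \|u_n(\lambda_n(t))-u(t)\|_{\umath'} \to 0$ implies $\sup_t d_w(u_n(\lambda_n(t)),u(t)) \to 0$. Combined with $\sup_t|\lambda_n(t)-t| \to 0$, this is exactly Skorokhod convergence $u_n \to u$ in $\dmath([0,T];\ball_w)$. \textbf{The main obstacle} is the uniform version of the first step: it is the uniform continuity of the identity on the compact ball $\ball$---equivalently the uniform coincidence of the weak and $\umath'$ topologies there---that allows a single sequence of time-changes to realize simultaneously the $\umath'$- and the $\ball_w$-Skorokhod convergence; without compactness of $\ball$ (i.e. of the embedding $\umath \hookrightarrow \hmath$) this transfer would fail.
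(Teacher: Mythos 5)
Your proposal is correct and takes essentially the same approach as the proof behind this lemma (the paper itself gives no argument here, deferring to Lemma 2 of \cite{Motyl_NS_Levy_2012}): the compactness of the embedding $\hmath \hookrightarrow {\umath }^{\prime }$ forces the weak topology of $\hmath $ and the ${\umath }^{\prime }$-norm topology to coincide on $\ball $, and Skorokhod convergence then transfers through the (uniformly continuous) identity map on the compact metrizable ball, using the same time-changes ${\lambda }_{n}$. The only point to state explicitly is that $\ball $ is closed in ${\umath }^{\prime }$ (so that $u(t)\in \ball $), which your weak-compactness argument does deliver.
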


\bigskip  \noindent
\bf Proof of Lemma \ref{L:Dubinsky_cadlag_unbound}. \rm 
Let us note that since $\kcal \subset {L}^{\infty }(0,T;\hmath ) \cap \dmath ([0,T];{\umath }^{\prime })$, by Lemma \ref{L:D(0,T,{hmath }_{w})_conv} $\kcal \subset \dmath ([0,T];{\hmath }_{w})$. Now, it is easy to see that $\kcal \subset {\zcal }_{}$.
We can  assume that $\kcal $ is a closed subset of ${\zcal }_{}$. Because of the assumption (b), the weak topology in  ${L}_{w}^{2}(0,T;\vmath )$ induced on ${\zcal }_{2}$ is metrizable. 
By assumption (a), it is sufficient to consider the metric  subspace $ \dmath ([0,T]; {\ball }_{w}) \subset \dmath ([0,T],{\hmath }_{w}) $ 
with $r:=\sup_{u\in \kcal } $ $\sup_{s \in[0,T]} $ $ {|u(s)|}_{\hmath }$.
Thus compactness of a subset of ${\zcal }_{}$ is equivalent to its sequential compactness. Let $(\un )$ be a sequence in $\kcal $. By the Banach-Alaoglu Theorem condition (b) yields that the set $\kcal $ is relatively compact in $ {L}_{w}^{2}(0,T;\vmath ) $. 

\bigskip  \noindent
Using the compactness criterion in the space of c\`{a}dl\`{a}g functions contained in Theorem 
\ref{T:cadlag_compactness}, we will prove that $(\un )$ is compact in  $ \dmath ([0,T]; {\umath }^{\prime })$.
Indeed, by (a) for every $t\in [0,T]$ the set $\{ \un (t), n\in \nat  \} $ is bounded in $\hmath $. Since the embedding $\hmath  \subset {\umath }^{\prime }$ is compact, the set $\{ \un (t), n\in \nat  \} $ is compact in ${\umath }^{\prime }$.
This together with condition (c) implies compactness of the sequence $(\un )$ in the space 
$\dmath ([0,T];{\umath }^{\prime })$.

\bigskip  \noindent
Therefore there exists a subsequence of $(\un )$, still denoted by $(\un )$, such that 
\[
  \un \to u \quad \mbox{in} \quad {L}_{w}^{2}(0,T;\vmath ) \cap \dmath ([0,T]; {\umath }^{\prime }) 
\quad \mbox{as } \quad n \to \infty  .
\] 
Since $\un \to u$ in $\dmath ([0,T];{\umath }^{\prime })$, by assumption (a) and Lemma \ref{L:D(0,T,{hmath }_{w})_conv}, we infer that $\un \to u$ in $\dmath ([0,T],{\hmath }_{w})$. 
We will prove that there exists another subsequence  of $ (\un )$ such that 
\[
   \un \to u \quad \mbox{ in } \quad  {L}^{2}(0,T;{L}^{2}_{loc}(\ocal )).
\]
To this end let us fix $R \in \nat $. Let us consider the following spaces of restrictions to ${\ocal }_{R}$ of functions defined on $\ocal $
\[
 \hmath ({\ocal }_{R}) :=\{ {u}_{|{{\ocal }_{R}}} , \ u\in \hmath  \} \subset {L}^{2}({\ocal }_{R};\rtd )
\quad \mbox{and} \quad 
 \vmath ({\ocal }_{R}) :=\{ {u}_{|{{\ocal }_{R}}} , \ u\in \vmath  \} \subset {H}^{1}({\ocal }_{R};\rtd ).
\] 
Using again  Theorem \ref{T:cadlag_compactness}, we infer that  the sequence  $
 ({\un }_{|{\ocal }_{R}} )$ is compact in  $ \dmath ([0,T]; {\umath }^{\prime })$.
Thus there exists a subsequence $(\unk ) \subset (\un )$ such that 
$ {\unk }_{|{\ocal }_{R}} \to {u}_{|{\ocal }_{R}} $ in $\dcal ([0,T];{\umath }^{\prime })  $
as $k \to \infty $. 
Since ${\ocal }_{R}$ is bounded and the norms in $\hmath $ and  $\vmath $ are equivalent to the norms in ${L}^{2}(\ocal ;\rtd )$ and ${H}^{1}(\ocal ;\rtd )$, respectively, we infer that the embedding 
$\vmath ({\ocal }_{R}) \subset \hmath ({\ocal }_{R})$ is compact.
Moreover, the embeddings $\hmath ({\ocal }_{R}) \hookrightarrow {\hmath }^{\prime }\rightarrow {\umath }^{\prime }$ are continuous.
Hence, by the Lions Lemma \cite{Lions_69}, for every $\eps >0 $ there exists a constant $ {C}_{\eps , R}>0 $ such that
\[
    {|u|}_{{L}^{2}({\ocal }_{R})}^{2} 
 \le \eps \norm{u}{\vmath }{2} + {C}_{\eps ,R} {|{u}_{|{\ocal }_{R}}|}_{{\umath }^{\prime }}^{2} ,
 \qquad u \in \vmath .
\]
In particular, for almost all $s \in [0,T]$
\[
  {|\unk (s) - u(s )|}_{{L}^{2}({\ocal }_{R})}^{2} 
 \le \eps \norm{\unk (s) - u(s)}{\vmath }{2}
 + {C}_{\eps ,R} {|{\unk }_{|{\ocal }_{R}} (s) - {u}_{|{\ocal }_{R}}(s)|}_{{\umath }^{\prime }}^{2} ,
\quad k \in \nat ,
\]
and hence
\[
  {p}_{T,R}^{2} (\unk  - u ) =
  {\| \unk  - u \| }_{{L}^{2}(0,T;{L}^{2}({\ocal }_{R}))}^{2}
  \le \eps \norm{\unk  - u }{{L}^{2}(0,T;\vmath )}{2}
 + {C}_{\eps ,R} {\| {\unk }_{|{\ocal }_{R}} - {u}_{|{\ocal }_{R}} \| }_{{L}^{2}(0,T;{\umath }^{\prime })}^{2}  .
\]
Passing to the upper limit as $k \to \infty $ in the above inequality and using the estimate
\[
 \norm{\unk  - u}{{L}^{2}(0,T;\vmath )}{2}
   \le 2 \bigl( \norm{\unk   }{{L}^{2}(0,T;\vmath )}{2}
    + \norm{ u }{{L}^{2}(0,T;\vmath )}{2} \bigr)
 \le 4 {c}_{2} ,
\]
where ${c}_{2}= \sup_{u \in \kcal } \norm{u}{{L}^{2}(0,T;\vmath )}{2}$, we infer that
$
  \limsup_{\kinf }  {p}_{T,R}^{2} (\unk  - u ) \le 4 {c}_{2} \eps  .
$
By the arbitrariness of $\eps $,
\[
  \lim_{\kinf }  {p}_{T,R} (\unk  - u ) =0 .
\]
Using the diagonal method we can choose a subsequence of $(\un )$ convergent in ${L}^{2}(0,T;{L}^{2}_{loc}(\ocal ))$.
The proof of  Lemma \ref{L:Dubinsky_cadlag_unbound} is thus complete. 
\qed


\bigskip
\section{Appendix B: Time homogeneous Poisson random measure}

\noindent
We follow the approach due to Brze\'{z}niak and Hausenblas \cite{Brzezniak_Hausenblas_2009}, 
\cite{Brzezniak_Hausenblas_2010}, see also \cite{Ikeda_Watanabe_81}, \cite{Applebaum_2009}  and \cite{Peszat_Zabczyk_2007}. 
Let us denote
 $\nat :=\{ 0,1,2,... \} , \, \,  \overline{\nat }:= \nat \cup \{ \infty  \} , \, \, 
 {\rzecz }_{+}:=[0,\infty ) $.
Let  $(S, \scal )$ be a measurable space and let 
 ${M}_{\overline{\nat }}(S) $ be the set of all $\overline{\nat }$ valued measures on $(S, \scal )$.
On the set  ${M}_{\overline{\nat }}(S)$ we consider the $\sigma $-field
 $  {\mcal }_{\overline{\nat }}(S) $ defined as
the smallest $\sigma $-field  such that for all $ B \in \scal $: the map
$
      {i}_{B} : {M}_{\overline{\nat }}(S) \ni \mu \mapsto \mu (B) \in \overline{\nat }
$ 
is measurable.

\bigskip  \noindent
Let $(\Omega , \fcal ,\p  )$ be a complete probability space with filtration $\mathbb{F}:=({\fcal }_{t}{)}_{t\ge 0}$ satisfying the usual hypotheses, see \cite{Metivier_82}.
\bigskip 
\begin{definition} \rm (see \cite{Applebaum_2009} and Appendix C in \cite{Brzezniak_Hausenblas_2009}). Let $(Y, \ycal )$ be a measurable space. A
\it  time homogeneous Poisson random measure $\eta $  \rm on $(Y, \ycal )$ over $(\! \Omega ,\fcal , \fmath ,\p )$ is a measurable function 
$$
\eta : (\Omega , \fcal ) \to \bigl( {M}_{\overline{\nat }} ({\rzecz }_{+}\times Y),{\mcal }_{\overline{\nat }} ({\rzecz }_{+}\times Y) \bigr) 
$$
such that 
\begin{itemize}
\item[(i) ] for all $ B \in \bcal ({\rzecz }_{+}) \otimes \ycal $, $\eta (B):= {i}_{B} \circ \eta : \Omega \to \overline{\nat }$ is a Poisson random variable with parameter $\e [\eta (B)]$;
\item[(ii) ] $\eta $ is independently scattered, i.e. if the sets ${B}_{j}\in \bcal ({\rzecz }_{+}) \otimes \ycal $, $j=1,...,n$, are disjoint then the random variables $\eta ({B}_{j})$, $j=1,...,n$, are independent;
\item[(iii) ]  for all $ U \in \ycal $  the $\overline{\nat }$-valued process 
$\bigl( N(t,U) {\bigr) }_{t \ge 0} $ defined by 
$$
   N(t,U):= \eta ((0,t]\times U) , \qquad  t \ge 0 
$$
is $\fmath $-adapted and its increments are independent of the past, i.e. if $t>s\ge 0$, then 
$N(t,U)-N(s,U)= \eta ((s,t]\times U)$ is independent of ${\fcal }_{s}$.
\end{itemize} 
\end{definition}
\noindent
If $\eta $ is a time homogeneous Poisson random measure then the formula
\[
  \mu (A) := \e [\eta ((0,1]\times A )] , \qquad A \in \ycal 
\]
defines a measure on $(Y, \ycal )$ called an \it intensity measure \rm of $\eta $.
Moreover, for all $T<\infty $ and all $A\in \ycal $ such that $\e \bigl[ \eta ((0,T]\times A) \bigr] <\infty $, the 
$\rzecz $-valued process $\{ \tilde{N} (t,A) {\} \! }_{t \in (0,T]\! }$ defined by 
\[
   \tilde{N} (t,A) := \eta ((0,t]\times A)  - t \mu (A) , \qquad t \in (0,T],
\] 
is an integrable martingale on $(\Omega ,\fcal , \fmath ,\p )$.
The random measure $l \otimes \mu $ on $\bcal ({\rzecz }_{+}) $ $\otimes $ $\ycal $, where $l$ stands for the Lebesgue measure, is called an \it compensator \rm of $\eta $ and 
the difference between a time homogeneous Poisson random measure $\eta $ and its compensator, i.e. 
\[
    \tilde{\eta } := \eta - l \otimes \mu   ,
\]
is called a \it compensated time homogeneous Poisson random measure.\rm 

\bigskip  \noindent
Let us also recall basic properties of the stochastic integral with respect to  $\tilde{\eta }$,
see \cite{Brzezniak_Hausenblas_2009}, \cite{Ikeda_Watanabe_81} and \cite{Peszat_Zabczyk_2007} for details.
Let $E $ be a separable Hilbert space and let $\pcal $ be a predictable $\sigma $-field on $[0,T] \times \Omega $.
Let ${\mathfrak{L}}^{2}_{\mu ,T} (\pcal \otimes \ycal ,l \otimes \p \otimes \mu ;E)$ be a space of all $E$-valued, $\pcal \otimes \ycal $-measurable processes such that
\[
  \e \Bigl[ \int_{0}^{T}\int_{Y} \norm{\xi (s, \cdot ,y)}{E}{2} \,  ds d\mu (y) \Bigr] < \infty .
\] 
If $\xi \in {\mathfrak{L}}^{2}_{\mu ,T} (\pcal \otimes \ycal ,l \otimes \p \otimes \mu ;E )$
then the integral process $\int_{0}^{t} \int_{Y} \xi (s, \cdot ,y) \, \tilde{\eta }(ds,dy)$, 
$t\in [0,T]$, is a \it c\`{a}dl\`{a}g \rm ${L}^{2}$-integrable martingale. Moreover, the following isometry formula holds
\begin{equation} \label{E:isometry}
  \e \biggl[ \Norm{\int_{0}^{t} \int_{Y} \xi (s, \cdot ,y)  \tilde{\eta }(ds,dy) }{E}{2} \biggr]
  =\e \Bigl[ \int_{0}^{t}\int_{Y} \norm{\xi (s, \cdot ,y)}{E}{2}   ds d\mu (y) \Bigr] ,
   \, \,  t \in [0,T].
\end{equation} 

\bigskip 

\section{Appendix C: A version of the Skorokhod Embedding Theorem}

\noindent
In the proof of Theorem \ref{T:existence}  we use the following  version of the Skorokhod Embedding Theorem following from the version due to Jakubowski \cite{{Jakubowski_1998}} and the version  due to Brze\'{z}niak and Hausenblas  \cite[Theorem E.1]{Brzezniak_Hausenblas_2010}. 

\bigskip 
\begin{cor} \label{C:Skorokhod_J,B,H} \rm (Corollary 2 in \cite{Motyl_NS_Levy_2012}) \it 
 Let ${\xcal }_{1}$ be a separable complete metric space and let ${\xcal }_{2}$ be a topological space such that
there exists a sequence  $\{ {f}_{\iota }{ \} }_{\iota \in \nat } $ of continuous functions ${f}_{\iota }:{\xcal }_{2} \to \rzecz $  separating points of ${\xcal }_{2}$.
Let $\xcal := {\xcal }_{1}\times {\xcal }_{2}$ with the Tykhonoff topology induced by the projections 
$$
  {\pi }_{i}: {\xcal }_{1}\times {\xcal }_{2} \to {\xcal }_{i} , \qquad  i=1,2.
$$
Let $(\Omega ,\fcal ,\p )$ be a probability space and let 
${\chi }_{n}:\Omega \to {\xcal }_{1}\times {\xcal }_{2}$, $n\in \nat $, be a family of random variables such that the sequence $\{ \lcal aw({\chi }_{n}), n \in \nat \} $ is tight on ${\xcal }_{1}\times {\xcal }_{2}$.
Finally let us assume that there exists a random variable $\rho :\Omega \to {\xcal }_{1}$ such that 
$\lcal aw({\pi }_{1}\circ {\chi }_{n}) = \lcal aw(\rho )$ for  all $ n \in \nat $.

\bigskip \noindent
Then there exists a subsequence $\bigl( {\chi }_{{n}_{k}} {\bigr) }_{k \in \nat } $,
a probability space $(\bar{\Omega }, \bar{\fcal }, \bar{\p })$, a family of ${\xcal }_{1}\times {\xcal }_{2}$-valued random variables $\{ {\bar{\chi }}_{k}, \, k \in \nat  \} $ on 
$(\bar{\Omega }, \bar{\fcal }, \bar{\p })$ and a random variable ${\chi }_{*}: \bar{\Omega } \to 
{\xcal }_{1}\times {\xcal }_{2}$ such that 
\begin{itemize}
\item[(i) ] $\lcal aw ({\bar{\chi }}_{k}) = \lcal aw ({\chi }_{{n}_{k}})$ for all $ k \in \nat $;
\item[(ii) ] ${\bar{\chi }}_{k} \to {\chi }_{*}$ in ${\xcal }_{1}\times {\xcal }_{2}$ a.s. as $k \to \infty $;
\item[(iii) ] ${\pi }_{1} \circ {\bar{\chi }}_{k} (\bar{\omega }) = {\pi }_{1} \circ {\chi }_{*}(\bar{\omega })$ for all $\bar{\omega } \in \bar{\Omega }$.
\end{itemize}
\end{cor}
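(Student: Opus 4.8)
The plan is to derive the conclusion by dovetailing two existing results: Jakubowski's non-metric version of the Skorokhod representation theorem \cite{Jakubowski_1998}, which yields the subsequence together with almost surely convergent copies and hence assertions (i) and (ii), and the refinement of Brze\'{z}niak and Hausenblas \cite[Theorem E.1]{Brzezniak_Hausenblas_2010}, which additionally freezes the first marginal $\bar{\omega }$-pointwise and so delivers assertion (iii). The role of the hypothesis $\lcal aw({\pi }_{1}\circ {\chi }_{n})=\lcal aw(\rho )$ for all $n$ is precisely to make this last step available.

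First I would verify that Jakubowski's theorem applies on the product $\xcal ={\xcal }_{1}\times {\xcal }_{2}$. Its only structural requirement is a countable family of continuous real functions separating points. Since ${\xcal }_{1}$ is separable and metric, choosing a countable dense set $\{ {x}_{j} {\} }_{j\in \nat }$ and setting ${g}_{j}(x):={\varrho }_{1}(x,{x}_{j})$ gives continuous functions separating points of ${\xcal }_{1}$; combined with the given sequence $\{ {f}_{\iota } {\} }_{\iota \in \nat }$ on ${\xcal }_{2}$, the collection $\{ {g}_{j}\circ {\pi }_{1} {\} }_{j}\cup \{ {f}_{\iota }\circ {\pi }_{2} {\} }_{\iota }$ is a countable separating family on $\xcal $. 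Together with the assumed tightness of $\{ \lcal aw({\chi }_{n}) {\} }_{n}$, this lets me invoke Jakubowski's theorem to extract a subsequence $({\chi }_{{n}_{k}})$ and produce, on a standard probability space, random variables ${\bar{\chi }}_{k}$ and ${\chi }_{*}$ with $\lcal aw({\bar{\chi }}_{k})=\lcal aw({\chi }_{{n}_{k}})$ and ${\bar{\chi }}_{k}\to {\chi }_{*}$ almost surely, which is already (i) and (ii).

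The delicate point, and the main obstacle, is (iii), since almost sure convergence of the identically distributed first components ${\pi }_{1}\circ {\bar{\chi }}_{k}$ does not by itself make them equal. Here I would exploit that all first marginals coincide: writing $\nu :=\lcal aw(\rho )$, I disintegrate each law as $\lcal aw({\chi }_{n})=\nu (dx)\, {K}_{n}(x,\cdot )$, where ${K}_{n}$ is the regular conditional distribution of ${\pi }_{2}\circ {\chi }_{n}$ given ${\pi }_{1}\circ {\chi }_{n}=x$; such a disintegration exists because ${\xcal }_{1}$ is Polish and, through the separating family, ${\xcal }_{2}$ embeds measurably into ${\rzecz }^{\nat }$. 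Following \cite[Theorem E.1]{Brzezniak_Hausenblas_2010}, I would then realize the whole sequence on a product space $\bar{\Omega }={\Omega }_{1}\times {\Omega }_{2}$ whose first factor carries a single ${\xcal }_{1}$-valued variable ${\xi }_{*}$ with $\lcal aw({\xi }_{*})=\nu $, defining each ${\bar{\chi }}_{k}$ to have first component exactly ${\xi }_{*}$ and second component built measurably from ${\xi }_{*}$, the kernel ${K}_{{n}_{k}}$, and an auxiliary source of randomness on ${\Omega }_{2}$. By construction ${\pi }_{1}\circ {\bar{\chi }}_{k}={\xi }_{*}={\pi }_{1}\circ {\chi }_{*}$ for every $\bar{\omega }$, giving (iii); the joint laws in (i) are matched because the conditional kernels are reproduced, and the almost sure convergence of the second components in (ii) is recovered by applying the Jakubowski extraction at the level of these kernels.

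The technical heart is thus the coupled construction behind (iii): one must simultaneously keep the first coordinate frozen at ${\xi }_{*}$, match the conditional laws ${K}_{{n}_{k}}$, and retain almost sure convergence of the second coordinate. This is exactly what Theorem E.1 of \cite{Brzezniak_Hausenblas_2010} supplies, so once the separating-family verification above places us in the setting of that theorem, assertions (i)--(iii) follow.
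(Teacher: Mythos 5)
Your proposal matches the paper's own treatment: the paper does not prove this corollary from scratch but derives it, exactly as you do, by combining Jakubowski's non-metric Skorokhod theorem \cite{Jakubowski_1998} (which supplies the subsequence, the identically distributed copies, and assertions (i)--(ii)) with the Brze\'{z}niak--Hausenblas refinement \cite[Theorem E.1]{Brzezniak_Hausenblas_2010}, which exploits the equality of the first marginals to freeze the first coordinate and yields (iii). Your additional details (the countable separating family $\{ {g}_{j}\circ {\pi }_{1}\} \cup \{ {f}_{\iota }\circ {\pi }_{2}\} $ on the product and the disintegration argument behind Theorem E.1) are consistent with, and merely fill in, what the paper delegates to the cited references.
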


\bigskip 
\noindent
To prove Theorem \ref{T:existence} we use Corollary \ref{C:Skorokhod_J,B,H} for the space 
\[
 {\xcal }_{2}:={\zcal }_{}:=  {L}_{w}^{2}(0,T;\vmath ) \cap {L}^{2}(0,T;{L}^{2}_{loc}({\ocal }_{R}))
 \cap \dmath ([0,T];{\umath }^{\prime })
  \cap \dmath ([0,T];{\hmath }_{w}).
\]
We recall now  the result about the existence of the  countable family of real valued continuous mappings defined on $\zcal $ and separating points of this space.

\begin{remark} \ \label{R:separating_maps} \rm  (see Remark 2 in \cite{Motyl_NS_Levy_2012})
\begin{itemize}
\item[(1)] Since  ${L}^{2}(0,T;{L}^{2}_{loc}({\ocal }_{R}))$ and $\dmath ([0,T];{\umath }^{\prime })$ are separable and completely metrizable spaces, we infer that on each of these spaces there exists a  countable family of continuous real valued mappings separating points, see \cite{Badrikian_70}, expos\'{e} 8.
\item[(2)] For the space ${L}^{2}_{w}(0,T;\vmath )$ it is sufficient to put
\[
    {f}_{m}(u):= \int_{0}^{T} \dirilsk{u(t)}{{v}_{n}(t)}{} \, dt \in \rzecz ,
 \qquad u \in {L}^{2}_{}(0,T;\vmath ),\quad m \in \nat ,
\]
where $\{ {v}_{m}, m \in \nat  \} $ is a dense subset of ${L}^{2}(0,T;\vmath )$.
Then $({f}_{m}{)}_{m \in \nat }$ is a sequence of continuous real valued mappings separating points of the space ${L}^{2}_{w}(0,T;\vmath )$.
\item[(3)] Let ${\hmath }_{0} \subset \hmath $ be a countable and dense subset of $\hmath $. Then by 
(\ref{E:D([0,T];H_w)_cadlag}) for each $h \in {\hmath }_{0}$ the mapping
\[
   \dmath ([0,T];{\hmath }_{w}) \ni u \mapsto \ilsk{u(\cdot )}{h}{\hmath } \in \dmath ([0,T];\rzecz ) 
\]
is continuous.  Since $\dmath ([0,T];\rzecz )$ is a separable complete metric space, there exists a sequence $({g}_{l}{)}_{l \in \nat }$ of real valued continuous functions defined on $\dmath ([0,T];\rzecz )$ separating points of this space. Then the mappings ${f}_{h,l}$, $h \in {\hmath }_{0}$, $l \in \nat $ defined by 
\[
   {f}_{h,l}(u):= {g}_{l} \bigl( \ilsk{u(\cdot )}{h}{\hmath } \bigr) , \qquad 
    u \in \dmath ([0,T];{\hmath }_{w}),
\] 
form a countable family of continuous mappings on $\dmath ([0,T];{\hmath }_{w})$ separating points of this space.
\end{itemize}
\end{remark}

\section{Appendix D: Proofs of Lemmas \ref{L:Galerkin_estimates } and \ref{L:comp_Galerkin}}

\noindent
\bf Proof of Lemma \ref{L:Galerkin_estimates }. \rm 
For every $n \in \nat $ and  $R>0$ let us define
\begin{equation} \label{E:stopping_time}
  {\taun }_{}(R):= \inf \{ t \ge 0: \, \, |\un (t){|}_{H} \ge R  \} \wedge T. 
\end{equation}
Since the process $\bigl( {u}_{n}(t) {\bigr) }_{t \in [0,T]} $ is $\fmath $-adapted and right-continuous, 
$\taun (R)$ is a stopping time. Moreover, since  the process $({u}_{n})$ is c\`{a}dl\`{ag} on $[0,T]$,  the trajectories $t \mapsto {u}_{n}(t)$ are bounded on $[0,T]$, $\p $-a.s. Thus
$\taun (R)\uparrow T$, $\p $-a.s., as $R\uparrow \infty $.

\bigskip  \noindent
Assume first that $p =2$ or $p=2+\gamma $.
Using the It\^{o} formula to the function  
$\phi(x):={|x|}_{}^{p}:={|x|}_{\hmath }^{p}$, $x \in \hmath $,  we obtain for all $t \in [0,T]$
\begin{align}
 & {|  {u}_{n}  ({t\wedge \taun (R)}) | }_{\hmath }^{p} =  {|\Pn {u}_{0}|}_{\hmath }^{p} 
+ \int_{0}^{t\wedge \taun (R)} \bigl\{ p {| \un (s)| }_{\hmath }^{p-2} 
  \dual{\Phin (\un (s))}{\un (s)}{} \bigr\}  \, ds \nonumber \\
 & + {M}_{n} ({t\wedge \taun (R)}) + {I}_{n}({t\wedge \taun (R)}) +  {K}_{n}({t\wedge \taun (R)})  
+ {N}_{n} ({t\wedge \taun (R)}) + {J}_{n} ({t\wedge \taun (R)}),  \label{E:Ito_formula}
\end{align}
where
\begin{equation}  \label{E:Phi_n}
 \Phin (v):=  -\Pn {\acal }v - {\bcal }_{n} (v) -\Pn {\rcal }v
 + \Pn f   , \quad v \in {\hmath }_{n} ,
\end{equation}
and for $t \in [0,T]$
\begin{align}
{M}_{n}(t):=&  \int_{0}^{t} 
   \int_{{Y}_{0}} \big\{  {|\un ({s}^{-}) + \Pn F(s,\un ({s}^{-});y) | }_{\hmath }^{p} -
    {|\un ({s}^{-})|}_{\hmath }^{p}  \bigr\} \, \tilde{\eta}(ds,dy) \nonumber \\
   &  + \int_{0}^{t} 
   \int_{Y\setminus {Y}_{0}} \big\{ {| \un ({s}^{-}) + \Pn F(s,\un ({s}^{-});y)|}_{\hmath }^{p} -
    {| \un ({s}^{-}) |}_{\hmath }^{p}  \bigr\} \, \tilde{\eta}(ds,dy) ,  
    \label{E:M_n} \\
{I}_{n}(t):=& \int_{0}^{t} \int_{{Y}_{0}} 
   \big\{ {| \un ({s}^{})  + \Pn F(s,\un ({s}^{});y)| }_{\hmath }^{p}
   -{| \un ({s}^{}) | }_{\hmath }^{p}  \nonumber  \\
 & -p {| \un ({s}^{}) | }_{\hmath }^{p-2} 
   \ilsk{\un ({s}^{})}{\Pn F(s,\un ({s}^{});y)}{\hmath } \bigr\} \, d\mu (y) ds  ,  \label{E:I_n} \\
{K}_{n}(t):=& \int_{0}^{t} \int_{Y \setminus {Y}_{0}} 
   \big\{ {| \un ({s}^{})  + \Pn F(s,\un ({s}^{});y)| }_{\hmath }^{p}
   -{| \un ({s}^{}) | }_{\hmath }^{p}     \bigr\} \, d\mu (y) ds  ,  \label{E:K_n} \\
 {N}_{n}(t):=& \int_{0}^{t} {|\un (s)|}_{\hmath }^{p-2} \dual{\un (s)}{G(s,\un (s)) \, d W(s) }{} ,   \label{E:N_n} \\   
 {J}_{n}(t) :=& \frac{1}{2}  \int_{0}^{t} 
\tr \bigl[ \Pn G(s,\un (s)) \frac{{\partial }^{2} \phi }{\partial  {x}^{2}}
     { \bigl( \Pn G(s,\un (s)) \bigr) }^{\ast } \bigr]  \, ds .  \label{E:J_n}
\end{align} 
Since by \eqref{E:A_acal_rel} we have  $\dual{\acal \un }{\un }{} = \dirilsk{\un }{\un }{}$ and by 
\eqref{E:antisymmetry_B}, $\dual{{\bcal }_{n} (\un )}{\un }{}=0 $, we infer that for all $s \in [0,T] $
\begin{equation*}
   \dual{\Phin (\un (s))}{\un (s)}{} = -\norm{\un (s)}{}{2} -\dual{\rcal \un (s)}{\un (s)}{}
    + \dual{f(s)}{\un (s)}{} .
\end{equation*}
By assumptions (R.1)
 and (C.1), \eqref{E:norm_V}  and the Schwarz inequality, we obtain for every 
$\eps >0$ and  for all $s \in [0,T] $
\begin{equation*}
   \dual{\Phin (\un (s))}{\un (s)}{} \le (-1+\eps )\norm{\un (s)}{}{2}  
 + \bigl( \frac{1}{2}{|f(s)|}_{{\vmath }^{\prime }} + {c}_{3} \bigr) \, {|\un (s)|}_{\hmath }^{2}  
 + \frac{1}{ 8\eps  } {|f(s)|}_{{\vmath }^{\prime }}^{2} .
\end{equation*}
Hence
\begin{align}
 & \int_{0}^{t\wedge \taun (R)} \bigl\{ p {|\un (s)| }_{\hmath }^{p-2} 
  \dual{\Phin (\un (s))}{\un (s)}{} \bigr\}  \, ds \nonumber \\
 & \le p\, \int_{0}^{t\wedge \taun (R)} \bigl\{  {|\un (s)| }_{\hmath }^{p-2} 
  \bigl[ (-1+\eps )\norm{\un (s)}{}{2}  
 + \bigl( \frac{1}{2}{|f(s)|}_{{\vmath }^{\prime }} +{c}_{3} \bigr) \, {|\un (s)|}_{\hmath }^{2}  
 + \frac{1}{ 8\eps  } {|f(s)|}_{{\vmath }^{\prime }}^{2}  \bigr] \, ds .
 \label{E:Phin_Galerkin_apriori}
\end{align}
Again, by \eqref{E:A_acal_rel}  inequality \eqref{E:G} in assumption (G.2) can be written equivalently in the following form
$$ \label{A:G'}
 \norm{G(s,u )}{\lhs ({Y}_{W},\hmath )}{2}
  \le (2-a ) \norm{u}{}{2}+{\lambda }_{}{|u|}_{\hmath }^{2}+ \kappa  , \qquad u \in \vmath . 
$$
Hence
\begin{equation}
  {J}_{n}(t\wedge \taun (R))  
  \le  \frac{p(p-1)}{2} \int_{0}^{t\wedge \taun (R)}  {|\un (s)|}_{\hmath }^{p-2} 
   \bigl[  (2-a ) \norm{\un (s)}{}{2}+{\lambda }_{}{|\un (s)|}_{\hmath }^{2}+ \kappa \bigr] \, ds  . 
 \label{E:E_J_n(t)}  
\end{equation}  
 
\bigskip  \noindent
From the Taylor formula, it follows that for every $p\ge 2 $ there exists a positive constant ${c}_{p}>0$ such that for all $x,h \in \hmath $ the following inequality holds
\begin{align}
   &\bigl| {|x+h|}_{\hmath }^{p} -{|x|}_{\hmath }^{p}- p {|x|}_{\hmath }^{p-2} \ilsk{x}{h}{\hmath } \bigr| 
  \le {c}_{p} ( {|x|}_{\hmath }^{p-2} +  {|h|}_{\hmath }^{p-2} ) \, {|h|}_{\hmath }^{2} .  
 \label{E:Taylor_I}
\end{align}
By \eqref{E:Taylor_I}
and \eqref{E:F_linear_growth} we obtain the following inequalities
\begin{align*}
| {I}_{n}(t)| 
 \le & {c}_{p}\int_{0}^{t} \int_{{Y}} {| \Pn F(s,\un ({s}^{});y) | }_{\hmath }^{2}
 \bigl\{ {| \un ({s}^{})| }_{\hmath }^{p-2} + {| \Pn F(s,\un ({s}^{});y) | }_{\hmath }^{p-2} \bigr\}
  \mu (dy) ds \nonumber \\
 \le &  {c}_{p}\int_{0}^{t} \bigl\{ 
{C}_{2} {| \un ({s}^{}) |}_{\hmath }^{p-2} 
  \bigl( 1+ {| \un ({s}^{}) | }_{\hmath }^{2}\bigr) 
 + {C}_{p}\bigl( 1+ {| \un ({s}^{})|}_{\hmath }^{p}\bigr)   \bigr\} \, ds \nonumber \\
 \le &  {\tilde{c}}_{p} \int_{0}^{t} \bigl\{ 1+ {| \un ({s}^{})| }_{\hmath }^{p} \bigr\} \, ds
= {\tilde{c}}_{p}t + {\tilde{c}}_{p}\int_{0}^{t} {| \un ({s}^{}) | }_{\hmath }^{p} \, ds , \qquad t \in [0,T] ,
\end{align*}
where  ${\tilde{c}}_{p}>0$ is a certain constant.
Thus by the Fubini Theorem, we obtain the following inequality
\begin{equation} \label{E:E_I_n(t)}
 \e \bigl[ | {I}_{n}(t) | \bigr] \le  
 {\tilde{c}}_{p}t + {\tilde{c}}_{p} \int_{0}^{t} \e \bigl[ {|\un (s)|}_{\hmath }^{p} \bigr] \, ds , \qquad t \in [0,T]. 
\end{equation}
Let us move now to the term ${K}_{n}$ defined by \eqref{E:K_n}.
From \eqref{E:Taylor_I} we obtain the following inequalities for all 
$x,h \in \hmath $
\begin{eqnarray}
& & \bigl| {|x+h|}_{\hmath }^{p} - {|x|}_{\hmath }^{p} \bigr|  
 \le \frac{p}{2} {|x|}_{\hmath }^{p} 
 + \bigl( {c}_{p} + \frac{p}{2} \bigr) {|x|}_{\hmath }^{p-2} {|h|}_{\hmath }^{2} 
 + {c}_{p} {|h|}_{\hmath }^{p}.  \label{E:Taylor_III}
\end{eqnarray}
By \eqref{E:Taylor_III}, \eqref{E:F_linear_growth} and the fact that $\mu (Y \setminus {Y}_{0})<\infty $ we get
\begin{align*}
 |{K}_{n}(t)| \le & \int_{0}^{t}\int_{Y \setminus {Y}_{0}} 
 \Bigl\{ \frac{p}{2} {|\un (s)|}_{\hmath }^{p} 
  + \bigl( {c}_{p} +\frac{p}{2}\bigr) {|\un (s)|}_{\hmath }^{p-2} {|F(s,\un (s))|}_{\hmath }^{2}
  + {c}_{p} {|F(s,\un (s))|}_{\hmath }^{p}   \Bigr\} \, d\mu (y)ds  \nonumber \\
  \le & \int_{0}^{t} \Bigl\{  \frac{p}{2}\, \mu (Y \setminus {Y}_{0}) {|\un (s)|}_{\hmath }^{p} \, ds  
 +
{C}_{2}\bigl( {c}_{p}+\frac{p}{2}\bigr) {|\un (s)|}_{\hmath }^{p-2} \bigl( 1+{|\un (s)|}_{\hmath }^{2} \bigr)
  + {c}_{p} {C}_{p} \bigl( 1+ {|\un (s)|}_{\hmath }^{p}   \Bigr\} \, ds  \nonumber \\
\le &  {\tilde{\tilde{c}}}_{p} \int_{0}^{t} \bigl\{ 1+ {| \un ({s}^{})| }_{\hmath }^{p} \bigr\} \, ds
= {\tilde{\tilde{c}}}_{p}t + {\tilde{\tilde{c}}}_{p}\int_{0}^{t} {| \un ({s}^{}) | }_{\hmath }^{p} \, ds , \qquad t \in [0,T] ,  
\end{align*}
where ${\tilde{\tilde{c}}}_{p}$ is a positive constant.
Thus by the Fubini Theorem, we obtain the following inequality
\begin{equation} \label{E:E_K_n(t)}
 \e \bigl[ | {K}_{n}(t) | \bigr] \le  
 {\tilde{\tilde{c}}}_{p}t + {\tilde{\tilde{c}}}_{p} \int_{0}^{t} \e \bigl[ {|\un (s)|}_{\hmath }^{p} \bigr] \, ds , \qquad t \in [0,T]. 
\end{equation}
By \eqref{E:Taylor_I}, \eqref{E:F_linear_growth} and \eqref{E:stopping_time}, the process 
$\bigl( {M}_{n}(t\wedge \taun (R)) {\bigr) }_{t \in [0,T]}$
is an integrable martingale. Hence $\e [{M}_{n}(t\wedge {\tau }_{n}(R))] = 0 $ for all $t\in [0,T]$.
Similarly, by \eqref{E:G} and \eqref{E:stopping_time}, the process 
$\bigl( {N}_{n}(t\wedge \taun (R)) {\bigr) }_{t \in [0,T]}$ is an integrable martingale and thus $\e [{N}_{n} (t\wedge {\tau }_{n}(R)) ] = 0 $ for all $t\in [0,T]$.

\bigskip  \noindent
By \eqref{E:Ito_formula}, \eqref{E:Phin_Galerkin_apriori}, \eqref{E:E_J_n(t)}, 
\eqref{E:E_I_n(t)} and \eqref{E:E_K_n(t)}, we have for all $t\in [0,T]$
\begin{align} 
& \e \bigl[  {| {u}_{n}  ({t\wedge \taun (R)}) | }_{\hmath }^{p} \bigr] 
 +p \bigl[ 1 -  \eps - \frac{1}{2} (p-1)(2-a)    \bigr]
 \e \Bigl[  \int_{0}^{T\wedge \taun (R)}  {| \un (s)| }_{\hmath }^{p-2} 
  \norm{\un (s)}{}{2} \, ds \Bigr] 
 \nonumber \\
\qquad  &\le c(p) 
 + \tilde{c}(p) \int_{0}^{t\wedge \taun (R)} 
 \e \bigl[ {|\un (s)|}_{\hmath }^{p} \bigr] \, ds ,
\label{E:Ito_formula_est}
\end{align} 
where $c(p)$ and $\tilde{c}(p)$ are some positive constants.
Let us choose $\eps > 0 $ such that  $ 1 -  \eps - \frac{(p-1) (2- a )}{2}  >0 $.
Note that since by assumption (G.2) $a\in \bigl( 2-\frac{2}{3+\gamma } , 2] $,  such an $\eps $ exists.
By \eqref{E:Ito_formula_est} we have, in particular, the following inequality
\[
\e \bigl[  {| {u}_{n}  ({t\wedge \taun (R)})| }_{\hmath }^{p} \bigr]
 \le  c(p) 
 + \tilde{c}(p) 
 \int_{0}^{t\wedge \taun (R)} \e \bigl[ {| \un (s)| }_{\hmath }^{p} \bigr] \, ds .
\]
By the Gronwall Lemma we infer that for all $t \in [0,T]$: $
\e \bigl[  {|{u}_{n}  ({t\wedge \taun (R)})|}^{p} \bigr]
 \le   {\tilde{\tilde{C}}}_{p} $
for some constant ${\tilde{\tilde{C}}}_{p}$ independent of $t\in [0,T]$, $R>0$ and $n \in \nat $, i.e.
$$ \label{E:E(un(t))_R_est}
\sup_{n \ge 1} \sup_{t\in [0,T]} \e \bigl[ {|{u}_{n} (t\wedge \taun (R))| }_{\hmath }^{p} \bigr]
 \le   {\tilde{\tilde{C}}}_{p} .
$$
Hence, in particular,
$ \label{E:E(int_un(t))_R_est}
\sup_{n \ge 1} \e \bigl[ \int_{0}^{{T\wedge \taun (R)}} {|{u}_{n}(s)|}_{\hmath }^{p} \, ds \bigr]  
 \le {\tilde{C}}_{p} 
$
for some constant ${\tilde{C}}_{p}>0$.
Passing to the limit as $R \uparrow \infty $, by the Fatou Lemma we infer that
\begin{equation} \label{E:E(int_un(t))_est}
\sup_{n \ge 1} \e \Bigl[ \int_{0}^{T}  {|{u}_{n}(s)|}_{\hmath }^{p} \, ds \Bigr]  
 \le  {\tilde{C}}_{p}  .
\end{equation} 
By \eqref{E:Ito_formula_est} and \eqref{E:E(int_un(t))_est}, we infer that
$  
\sup_{n \ge 1} \e \bigl[  \int_{0}^{T\wedge \taun (R)}  {|\un (s)|}_{\hmath }^{p-2} 
  \norm{\un (s)}{}{2} \, ds \bigr]  \le {C}_{p}
$
for some positive constant ${C}_{p}$. 
Passing to the limit as $R \uparrow \infty $ and using again the Fatou Lemma we infer that
\begin{equation}  \label{E:HV_estimate}
\sup_{n \ge 1} \e \Bigl[  \int_{0}^{T}  {|\un (s)|}_{\hmath }^{p-2} \norm{\un (s)}{}{2} \, ds \Bigr] 
\le {C}_{p} .
\end{equation} 
In particular, putting $p:=2$ by \eqref{E:norm_V}, \eqref{E:HV_estimate} and \eqref{E:E(int_un(t))_est}   we obtain inequality \eqref{E:V_estimate}. 

\bigskip  \noindent
Let us move to the proof of inequality \eqref{E:H_estimate}. By the Burkholder-Davis-Gundy inequality we obtain
\begin{align} 
&\e \Bigl[ \sup_{r\in [0,t ]} |{M}_{n}(r\wedge \taun (R))| \Bigr] 
 \le {\tilde{K}}_{p} \e \Bigl[ 
 \Bigl(  \int_{0}^{t\wedge \taun (R)}
   \int_{Y} \bigl(  {| \un ({s}^{})+\Pn F(s,\un ({s}^{});y)| }_{\hmath }^{p} -
    {|\un ({s}^{})|}_{\hmath }^{p}  {\bigr) }^{2}  \mu (dy) ds {\Bigr)  }^{\frac{1}{2}} 
  \Bigr]   \label{E:BDG_R}
\end{align}
for some constant ${\tilde{K}}_{p}>0$.
Let us recall that ${M}_{n}$ is defined by \eqref{E:M_n}.
By \eqref{E:Taylor_I} and the Schwarz inequality we obtain the following inequalities for all 
$x,h \in \hmath $
\begin{align*}
&  \bigl( {|x+h|}_{\hmath }^{p} - {|x|}_{\hmath }^{p} {\bigr) }^{2} 
 \le 2\bigl\{ {p}^{2}{|x|}_{\hmath }^{2p-2} {|h|}_{\hmath }^{2} + {c}_{p}^{2} 
   \bigl( {|x|}_{\hmath }^{p-2} + {|h|}_{\hmath }^{p-2} {\bigr) }^{2}   {|h|}_{\hmath }^{4} \bigr\}  \nonumber \\
&  \le 2  {p}^{2}{|x|}_{\hmath }^{2p-2} {|h|}_{\hmath }^{2} 
 + 4{c}_{p}^{2} {|x|}_{\hmath }^{2p-4} {|h|}_{\hmath }^{4} 
+ 4{c}_{p}^{2}  {|h|}_{\hmath }^{2p}.  
\end{align*}
Hence by inequality \eqref{E:F_linear_growth} in assumption (F.2) we obtain for all $s \in [0,T] $
\begin{align} 
&\int_{Y}\bigl( {| \un ({s}^{})  + \Pn F(s,\un ({s}^{});y) | }_{\hmath }^{p} -
    {| \un ({s}^{})|}_{\hmath }^{p}  {\bigr) }^{2} \mu (dy) 
  \le   2{p}^{2} {|\un ({s}^{})|}_{\hmath }^{2p-2} 
   \int_{Y} {| F(s,\un ({s}^{});y)| }_{\hmath }^{2} \, \mu (dy) \nonumber \\ 
& \quad   + 4 {c}_{p}^{2} {| \un ({s}^{})|}_{\hmath }^{2p-4} 
   \int_{Y} {| F(s,\un ({s}^{});y) | }_{\hmath }^{4} \, \mu (dy)
 + 4 {c}_{p}^{2} \int_{Y}{| F(s,\un ({s}^{});y) | }_{\hmath }^{2p} \mu (dy) \nonumber \\
&   \le {C}_{1} + {C}_{2} {| \un ({s}^{}) |}_{\hmath }^{2p-4} 
  + {C}_{3} {| \un ({s}^{})|}_{\hmath }^{2p-2} 
  + {C}_{4} {| \un ({s}^{}) |}_{\hmath }^{2p}
\label{E:BDG_R_Taylor_II'}  
\end{align}
for some positive constants ${C}_{i}$, $i=1,...,4$.
By \eqref{E:BDG_R_Taylor_II'} and the Young inequality we infer that
\begin{align}
&  
 \Bigl( \int_{0}^{t\wedge \taun (R)} 
   \int_{Y} \bigl( {|\un ({s}^{})+\Pn F(s,\un ({s}^{});y)| }_{\hmath }^{p} -
    {|\un ({s}^{})|}_{\hmath }^{p} {\bigr) }^{2}  \, \mu (dy) ds {\Bigr) }^{\frac{1}{2}} 
   \nonumber  \\
 &  \le {\bar{c}}_{1} + {\bar{c}}_{2}  \Bigl( 
  \int_{0}^{t\wedge \taun (R)}  {|\un ({s}^{})|}_{\hmath }^{2p} \, ds
{\Bigr) }^{\frac{1}{2}} , \label{E:BDG_R_Taylor_II} 
\end{align}
where ${\bar{c}}_{1}$ and  ${\bar{c}}_{2}$ are some positive constants.
By \eqref{E:BDG_R}, \eqref{E:BDG_R_Taylor_II} and \eqref{E:E(int_un(t))_est}
we obtain the following inequalities
\begin{align} 
&\e \Bigl[ \sup_{r\in [0,t]} |{M}_{n}(r \wedge \taun (R))| \Bigr] 
 \le {\tilde{K}}_{p}{\bar{c}}_{1} + {\tilde{K}}_{p}{\bar{c}}_{2} \e \Bigl[ \Bigl( 
  \int_{0}^{t\wedge \taun (R)} {|\un ({s}^{})|}_{\hmath }^{2p} \, ds
{\Bigr) }^{\frac{1}{2}} \Bigr]  \nonumber \\
 &\le {\tilde{K}}_{p}{\bar{c}}_{1} + {\tilde{K}}_{p}{\bar{c}}_{1} \e \Bigl[ 
 \Bigl( \sup_{s\in [0,t]}  {|\un ({s\wedge \taun (R)}^{})|}_{\hmath }^{p} {\Bigr) }^{\frac{1}{2}}
 \Bigl( \int_{0}^{t\wedge \taun (R)}  {| \un ({s}^{})|}_{\hmath }^{p} \, ds 
{\Bigr) }^{\frac{1}{2}} \Bigr]   \nonumber \\
 &\le {\tilde{K}}_{p}{\bar{c}}_{1} + \frac{1}{4} \e \Bigl[ 
  \sup_{s\in [0,t]}  {|\un ({s\wedge \taun (R)}^{})|}_{\hmath }^{p} \Bigr]
 + {\tilde{K}}_{p}^{2} {\bar{c}}_{2}^{2} \e \Bigl[ \int_{0}^{t\wedge \taun (R)} {|\un ({s}^{})|}_{\hmath }^{p} \, ds   \Bigr] \nonumber \\
 & \le \frac{1}{4} \e \Bigl[ 
  \sup_{s\in [0,t]}  {\bigl| \un ({s\wedge \taun (R)}^{}) \bigr|}_{\hmath }^{p} \Bigr] + \bar{c} ,
 \label{E:BDG_R_cont.}
\end{align}
where $\bar{c} = {\tilde{K}}_{p}{\bar{c}}_{1} + {\tilde{K}}_{p}^{2} {\bar{c}}_{2}^{2} {\tilde{C}}_{p}$. (The constant ${\tilde{C}}_{p}$ is the same as in \eqref{E:E(int_un(t))_est}).

\bigskip  \noindent
Similarly, by the Burkholder-Davis-Gundy inequality  we obtain
\begin{align*}
& \e \Bigl[ \sup_{r\in [0,t ]} |{N}_{n}(r\wedge \taun (R))| \Bigr] 
  \le C \, p \cdot \e \Bigl[
 {\Bigl( \int_{0}^{t \wedge \taun (R)} \, {|\un (s )|}_{\hmath }^{2p-2} \cdot
 \norm{  G(s , \un (s ) ) }{\lhs (Y,\hmath )}{2} \, ds
  \Bigr) }^{\frac{1}{2}} \Bigr]   \\
&  \le Cp\e \Bigl[ 
 \Bigl( \sup_{s\in  [0,t]} {|\un (s\wedge {\tau }_{n}(R))|}_{\hmath }^{p} {\Bigr) }^{\frac{1}{2}}
 {\Bigl( \int_{0}^{t \wedge \taun (R)} {|\un (s)|}_{\hmath }^{p-2} 
 \norm{G(s,\un (s))}{\lhs (Y,\hmath )}{2}ds  \Bigr) \!  }^{\frac{1}{2}} \Bigr]  ,
\end{align*} 
where ${N}_{n}$ is defined by \eqref{E:N_n}.
By inequality \eqref{E:G} in assumption (G.2) and
estimates \eqref{E:HV_estimate}, \eqref{E:E(int_un(t))_est}  we have the following inequalities
\begin{align}
 &\e \Bigl[ \sup_{r\in [0,t ]} |{N}_{n}(r\wedge \taun (R))| \Bigr] \nonumber  \\
 & \le C  p \, \e \Bigl[ 
\Bigl( \sup_{ s\in  [0,t] } {|\un (s\wedge {\tau }_{n}(R))|}_{\hmath }^{p} {\Bigr) }^{\frac{1}{2}}
 \cdot \Bigl( \int_{0}^{t\wedge \taun (R)} {|\un (s)|}_{\hmath }^{p-2} 
 \bigl[ {\lambda }_{}  {|\un (s )|}_{\hmath }^{2} +\kappa +(2-a)\norm{\un (s)}{}{2} \bigr] \, ds
  {\Bigr) }^{\frac{1}{2}} \Bigr]  \nonumber  \\
 &\le \frac{1}{4} \e \bigl[ \sup_{ r\in[0,t]} {|\un (r\wedge {\tau }_{n}(R))|}_{\hmath }^{p} \bigr]  \\
  & \quad  + {C}^{2}{p}^{2}
 \e \Bigl[ \int_{0}^{t\wedge \taun (R)} \bigl[ {\lambda }_{}  {|\un (s )|}_{\hmath }^{p} 
+\kappa {|\un (s )|}_{\hmath }^{p-2} +(2-a) {|\un (s )|}_{\hmath }^{p-2} \norm{\un (s )}{}{2}\bigr] \, ds
 \Bigr]  \nonumber \\
 &\le \frac{1}{4} \e \bigl[ \sup_{r\in [0,t]} {|\un (r\wedge {\tau }_{n}(R))|}_{\hmath }^{p} \bigr]
 + \bar{\bar{c}} ,   \label{E:BDG_Nn_R}
\end{align}
where $\bar{\bar{c}} = {C}^{2}{p}^{2} [\lambda {\tilde{C}}_{p} + \kappa {\tilde{C}}_{p-2} + (2-a){C}_{2}]$.
(The constants ${\tilde{C}}_{p}, {\tilde{C}}_{p-2}$ are the same as in \eqref{E:E(int_un(t))_est} and ${C}_{2}$ is the same as in \eqref{E:HV_estimate}.)
Therefore by \eqref{E:Ito_formula} for all $t\in [0,T]$
\begin{eqnarray} 
& & {|{u}_{n}({t\wedge \taun (R)})|}_{\hmath }^{p}
 \le    c(p)
+ \tilde{c}(p) \int_{0}^{T} {|\un (s)|}_{\hmath }^{p}  \, ds \nonumber  \\
& & 
 + \sup_{r \in [0,T] } |{M}_{n}(r\wedge \taun (R))|      
 + \sup_{r \in [0,T] } |{N}_{n}(r\wedge \taun (R))|.
 \label{E:Ito_BDG_R}
\end{eqnarray} 
Since inequality \eqref{E:Ito_BDG_R} holds for all $t\in [0,T]$ and the right-hand side of \eqref{E:Ito_BDG_R} in independent of $t$, we infer that
\begin{eqnarray} 
& &\e \Bigl[ \sup_{t\in [0,T]}{|{u}_{n}(t\wedge \taun (R))|}_{\hmath }^{p} \Bigr]
 \le  c(p)
+ \tilde{c}(p) \e \Bigl[ \int_{0}^{T} {|\un (s)|}_{\hmath }^{p}  \, ds  \Bigr] \nonumber \\
& & +  \e \Bigl[ \sup_{r \in [0,T] } |{M}_{n}(r\wedge \taun (R))| \Bigr] 
 +  \e \Bigl[ \sup_{r \in [0,T] } |{N}_{n}(r\wedge \taun (R))|  \Bigr] . \nonumber \\
    \label{E:Ito_BDG_R_E}
\end{eqnarray}
Using inequalities \eqref{E:E(int_un(t))_est}, \eqref{E:BDG_R_cont.} and 
\eqref{E:BDG_Nn_R} in \eqref{E:Ito_BDG_R_E}  we infer that
$$ \label{E:H_est_R}
 \e \Bigl[ \sup_{t \in T} {|{u}_{n}(t\wedge \taun (R))|}_{\hmath }^{p} \Bigr]
 \le {C}_{1}(p) ,
$$
where ${C}_{1}(p)>0$ is a constant independent of $n \in \nat $ and $R>0$. Passing to the limit as $R \to \infty $, we obtain inequality  \eqref{E:H_estimate}. 
Thus the lemma holds for $p\in \{ 2,2+\gamma  \} $.

\bigskip  \noindent
Let now $p \in [1,2+\gamma ) \setminus \{ 2\}  $. Let us fix $n \in \nat $. Then
\[
   {|{u}_{n}(t)|}_{\hmath }^{p} 
=  \bigl( {|{u}_{n}(t)|}_{\hmath }^{2+\gamma } {\bigr) }^{\frac{p}{2+\gamma }}
   \le \bigl( \sup_{t\in [0,T]} {|{u}_{n}(t)|}_{\hmath }^{2+\gamma } {\bigr) }^{\frac{p}{2+\gamma }} ,
   \qquad t \in [0,T].
\]
Thus 
\[
  \sup_{t\in [0,T]} {|{u}_{n}(t)|}_{\hmath }^{p}
   \le \bigl( \sup_{t\in [0,T]}{|{u}_{n}(t)|}_{\hmath }^{2+\gamma } {\bigr) }^{\frac{p}{2+\gamma }}
\]  
and by the H\"{o}lder inequality and inequality \eqref{E:H_estimate} with $p:=2+\gamma $
\begin{align*}
&  \e \Bigl[ \sup_{t\in [0,T]} {|{u}_{n}(t)|}_{\hmath }^{p} \Bigr] 
 \le \e \Bigl[ \bigl( \sup_{t\in [0,T]} {|{u}_{n}(t)|}_{\hmath }^{2+\gamma } {\bigr) }^{\frac{p}{2+\gamma }} \Bigr]  
    \le \Bigl( \e \Bigl[ \sup_{t\in [0,T]}{|{u}_{n}(t)|}_{\hmath }^{2+\gamma }\Bigr] 
   {\Bigr) }^{\frac{p}{2+\gamma }} 
\le {\bigl[ {C}_{1} (2+\gamma ) \bigr] }^{\frac{p}{2+\gamma }} .
\end{align*}
Since $n \in \nat $ was chosen in an arbitray way, we infer that
\[
\sup_{n \in \nat } \e \Bigl[ \sup_{t\in [0,T]} {|{u}_{n}(t)|}_{\hmath }^{p} \Bigr] 
   \le {C}_{1}(p) ,
\]
where ${C}_{1}(p)={[ {C}_{1} (2+\gamma ) ] }^{\frac{p}{2+\gamma }}$.
The proof of Lemma \ref{L:Galerkin_estimates } is thus complete.
\qed

\bigskip \noindent
To prove Lemma \ref{L:comp_Galerkin}
we will use Corollary \ref{C:tigthness_criterion_cadlag_unbound}.
To check condition (c) in Corollary \ref{C:tigthness_criterion_cadlag_unbound} we will use the following lemma.

\bigskip  
\begin{lemma} \label{L:Aldous_criterion} \rm (Lemma 9 in \cite{Motyl_NS_Levy_2012}) \it
Let ${({X}_{n})}_{n \in \nat }$ be a sequence of ${\umath }^{\prime }$-valued random variables.
Assume that there exist constants $\alpha ,\beta >0$ and  $C >0 $ such that 
for every sequence $({{\tau}_{n} } {)}_{n \in \nat }$ of $\mathbb{F}$-stopping times with
${\tau }_{n}\le T$ and for every $n \in \nat $ and $\theta \ge 0 $ the following condition holds
\begin{equation} \label{E:Aldous_est}
 \e \bigl[ \bigl( {| {X}_{n}({\tau }_{n} +\theta )-{X}_{n} ( {\tau }_{n}  ) |}_{{\umath }^{\prime }}^{\alpha } \bigr] \le C {\theta }^{\beta } .
\end{equation}
Then the sequence $({X}_{n}{)}_{n\in \nat }$  satisfies the Aldous condition  in the space ${\umath }^{\prime }$.  \rm
\end{lemma}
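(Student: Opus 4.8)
The plan is to obtain the Aldous condition of Definition \ref{D:Aldous} as an immediate consequence of the moment estimate \eqref{E:Aldous_est} by means of the Chebyshev inequality. First I would fix $\eps >0$ and $\eta >0$ and let $({\tau }_{n}{)}_{n\in \nat }$ be an arbitrary sequence of $\mathbb{F}$-stopping times with ${\tau }_{n}\le T$. For each $n\in \nat $ and each $\theta \ge 0$, applying the Chebyshev inequality to the nonnegative random variable ${|{X}_{n}({\tau }_{n}+\theta )-{X}_{n}({\tau }_{n})|}_{{\umath }^{\prime }}^{\alpha }$ and then invoking \eqref{E:Aldous_est} gives
\[
 \p \bigl\{ {|{X}_{n}({\tau }_{n}+\theta )-{X}_{n}({\tau }_{n})|}_{{\umath }^{\prime }}\ge \eta \bigr\}
 \le \frac{1}{\eta ^{\alpha }}\, \e \bigl[ {|{X}_{n}({\tau }_{n}+\theta )-{X}_{n}({\tau }_{n})|}_{{\umath }^{\prime }}^{\alpha } \bigr]
 \le \frac{C}{\eta ^{\alpha }}\, {\theta }^{\beta }.
\]

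Since $\beta >0$, the right-hand side is increasing in $\theta $, and the constant $C$ together with the exponents $\alpha ,\beta $ do not depend on $n$; hence taking the supremum over $0\le \theta \le \delta $ and over $n\in \nat $ yields
\[
 \sup_{n\in \nat }\, \sup_{0\le \theta \le \delta } \p \bigl\{ {|{X}_{n}({\tau }_{n}+\theta )-{X}_{n}({\tau }_{n})|}_{{\umath }^{\prime }}\ge \eta \bigr\}
 \le \frac{C}{\eta ^{\alpha }}\, {\delta }^{\beta }.
\]
It then suffices to choose $\delta >0$ so small that $C{\delta }^{\beta }/\eta ^{\alpha }\le \eps $, for instance $\delta := {\bigl( \eps \eta ^{\alpha }/C \bigr) }^{1/\beta }$. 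This $\delta $ depends only on $\eps $ and $\eta $, and not on the particular sequence $({\tau }_{n})$ of stopping times, which is precisely what Definition \ref{D:Aldous} demands.

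I do not expect any substantial obstacle in this argument: the whole proof reduces to the single application of the Chebyshev inequality followed by the explicit choice of $\delta $ above. The only point to monitor carefully is the \emph{uniformity} of the resulting bound, both in $n$ and in the choice of stopping times. Uniformity in $n$ is guaranteed because $C$, $\alpha $ and $\beta $ in hypothesis \eqref{E:Aldous_est} are independent of $n$, while uniformity over stopping times is already built into the statement of that hypothesis; these two features are exactly what allow the single threshold $\delta $ to work simultaneously for all $n$ and all admissible $({\tau }_{n})$.
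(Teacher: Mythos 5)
Your proof is correct: the Chebyshev (Markov) inequality applied to the $\alpha$-th moment, followed by the explicit choice $\delta := {\bigl( \varepsilon \eta ^{\alpha }/C \bigr) }^{1/\beta }$, is exactly the standard argument, and the uniformity in $n$ and in the stopping times that you single out is indeed the only point requiring care. Note that the present paper gives no proof of this lemma at all — it is quoted as Lemma 9 of \cite{Motyl_NS_Levy_2012} — and your argument coincides with the proof given in that reference.
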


\bigskip  \noindent
\bf Proof of Lemma \ref{L:comp_Galerkin}.  \rm 
The proof is essentially the same as the proof of Lemma 5 in \cite{Motyl_NS_Levy_2012}.
We provide the details because of its importance in the proof of Theorem \ref{T:existence}.  
We will apply Corollary \ref{C:tigthness_criterion_cadlag_unbound}.
By estimates  \eqref{E:H_estimate} and \eqref{E:V_estimate}, conditions (a), (b) are satisfied.
Using Lemma \ref{L:Aldous_criterion} we will  prove that the sequence ${(\un )}_{n \in \nat }$ satisfies the Aldous condition  in the space ${\umath }^{\prime }$.  
Let ${(\taun )}_{n \in \nat} $ be a sequence of stopping times such that $0 \le \taun \le T$.
By \eqref{E:Galerkin}, we have
\begin{align*}
 {u}_{n} (t) \,
 =  & \Pn {u}_{0}  - \int_{0}^{t} \Pn \acal  {u}_{n} (s) \, ds
   - \int_{0}^{t} \Pn \rcal  {u}_{n} (s) \, ds
  - \int_{0}^{t} \Bn \bigl( {u}_{n} (s) \bigr) \, ds 
  + \int_{0}^{t} \Pn f(s) \, ds  \\
 & + \int_{0}^{t} \int_{Y}\Pn F (s,{u}_{n}({s}^{-}),y) \tilde{\eta}(ds,dy) 
    + \int_{0}^{t} \int_{Y\setminus {Y}_{0}}\Pn F (s,{u}_{n}({s}^{}),y) d\mu (y)ds \nonumber \\
 & + \int_{0}^{t} \Pn G(s,\un (s)) \, dW(s) 
  =:   \sum_{i=1}^{8} \Jn{i} (t) , \qquad t \in [0,T].
\end{align*}
Let $\theta  >0 $. 
We will check that each term $\Jn{i}$, i=1,...,8, satisfies condition \eqref{E:Aldous_est}
in Lemma \ref{L:Aldous_criterion}.

\bigskip \noindent
Since by assumption (A.1) $\acal :\vmath  \to {\vmath }^{\prime }$ and ${|\acal (u)|}_{{\vmath }^{\prime }} \le \norm{u}{}{}$
and the embedding  ${\vmath }^{\prime } \hookrightarrow {\umath }^{\prime }$ is continuous, 
 using inequality \eqref{E:V_estimate} we obtain the following estimates
\begin{align*}
 &\e \bigl[ {| \Jn{2} (\taun + \theta ) - \Jn{2}(\taun ) | }_{{\umath }^{\prime }}  \bigr]
 = \e \Bigl[ {\Bigl| \int_{\taun }^{\taun + \theta } \Pn \acal {u}_{n} (s) \, ds \Bigr| }_{{\umath }^{\prime }} \Bigr]
 \le c \, \e \Bigl[  \int_{\taun }^{\taun + \theta }
\bigl|  \acal  {u}_{n} (s) {\bigr| }_{{\vmath }^{\prime }}  \, ds \Bigr]  \nonumber \\
 &
 \le c \, \e \Bigl[  \int_{\taun }^{\taun + \theta } \nonumber
 \norm{ {u}_{n} (s) }{}{}  \, ds \Bigr] 
  \le c \, \e \Bigl[ {\theta }^{\frac{1}{2}} 
 \Bigl( \int_{0 }^{T } \norm{{u}_{n} (s) }{}{2}\, ds {\Bigr) }^{\frac{1}{2}} \Bigr]
\le c \sqrt{{C}_{2}} \cdot {\theta }^{\frac{1}{2}}=: {c}_{2} \cdot {\theta }^{\frac{1}{2}}.  \label{E:Jn2}
\end{align*}
By Assumption (R.1) and estimate \eqref{E:H_estimate} we have
\begin{align*}
& \e \bigl[ {| \Jn{3} (\taun + \theta ) - \Jn{3}(\taun )| }_{{\umath }^{\prime  }}  \bigr]
 = \e \Bigl[ {\Bigl| \int_{\taun }^{\taun + \theta } \Pn \rcal \un  (s) \, ds \Bigr| }_{{\umath }^{\prime }} \Bigr]  
 \le  \, \e \Bigl[  \int_{\taun }^{\taun + \theta }
{| \rcal \un (s) |}_{{\vmath }^{\prime }}  \, ds \Bigr]  \\
& \le {c}_{} \, \e \Bigl[  \int_{\taun }^{\taun + \theta } 
 {|\un (s)|}_{\hmath }  \, ds \Bigr] 
  \le c\theta \, \e \bigl[ \sup_{s \in [0,T]} {|\un (s)|}_{\hmath } \bigr] \le c{C}_{1}(1) \, \theta 
 =: {c}_{3}\theta .  
\end{align*}
Since  $\umath \hookrightarrow {\vmath }_{\ast }$,  by  \eqref{E:estimate_B_ext}  
and \eqref{E:H_estimate} we have the following inequalities
\begin{align*}
& \e \bigl[ {|\Jn{4} (\taun + \theta )  - \Jn{4}(\taun ) |}_{{\umath }^{\prime }}  \bigr]
 = \e \Bigl[ { \Bigl| \int_{\taun }^{\taun + \theta }
  \Bn ({u}_{n} (s) ) \, ds \Bigr| }_{{\umath }^{\prime }} \Bigr] 
 \le c\e \Bigl[  \int_{\taun }^{\taun + \theta }
 |  B( {u}_{n} (s))  {| }_{{\vmath }_{\ast }^{\prime }} \, ds \Bigr]  \\
& \le c\e \Bigl[  \int_{\taun }^{\taun + \theta }
  \| B \| \cdot \bigl|  {u}_{n} (s) { \bigr| }_{\hmath }^{2}   \, ds \Bigr] 
  \le c\| B \|  \cdot  \e \bigl[ \sup_{s \in [0,T]} {|{u}_{n}(s)|}_{\hmath }^{2}\bigr] \cdot \theta
 \le c\| B \| \,  {C}_{1}(2)  \cdot \theta =: {c}_{4} \cdot \theta , 
\end{align*}
where $\| B \| $ stands for the norm of $B: \hmath  \times \hmath  \to {\vmath }_{\ast }^{\prime }$. 

\bigskip \noindent
Let us move to the term ${\Jn{5}}$. By the H\"{o}lder inequality, we have
\begin{align*}
& \e \bigl[ {|\Jn{5} (\taun + \theta ) - \Jn{5}(\taun ) |}_{{\umath }^{\prime }}  \bigr]
\le c  \e \Bigl[ {\Bigl| \int_{\taun }^{\taun + \theta } 
  \Pn f (s)  ds \Bigr| }_{{\vmath }^{\prime }} \Bigr]  
\le c  \cdot {\theta }^{\frac{1}{2}} \cdot \norm{f}{{L}^{2}(0,T;{\vmath }^{\prime })}{} =: {c}_{5} \cdot {\theta }^{\frac{1}{2}}. 
\end{align*}
Let us consider the noise term ${\Jn{6}}$.  Since  $ \hmath  \hookrightarrow {\umath }^{\prime } $, 
by \eqref{E:isometry},
condition \eqref{E:F_linear_growth} with $p=2$ in Assumption (F.2) and by \eqref{E:H_estimate}, we obtain the following inequalities
\begin{align*}
&  \e \bigl[ {|\Jn{6} (\taun + \theta ) - \Jn{6}(\taun )| }_{{\umath }^{\prime }}^{2}  \bigr] \nonumber 
  =  \e \Bigl[ \Bigl| \int_{\taun }^{\taun + \theta } \int_{Y} \Pn F(s, {u}_{n} ({s}^{-});y ) \, \tilde{\eta}(ds,dy)  {\Bigr| }_{{\umath }^{\prime }}^{2} 
\Bigr] \nonumber \\
&   \le c\e \Bigl[ \Bigl| \int_{\taun }^{\taun + \theta } \int_{Y} \Pn F(s, {u}_{n} ({s}^{-});y ) \, \tilde{\eta}(ds,dy)  {\Bigr| }_{\hmath }^{2} 
\Bigr] 
  = c\e \Bigl[  \int_{\taun }^{\taun + \theta } 
  \int_{Y} {\bigl| \Pn F(s, {u}_{n} (s);y ) \bigr| }_{\hmath }^{2} \, \mu (dy)ds   \Bigr]  \\
&\le C  \e \Bigl[  \int_{\taun }^{\taun + \theta } (1+{|{u}_{n}(s)|}_{\hmath }^{2} ) ds  \Bigr]  \nonumber 
   \le C \cdot  \theta \cdot \bigl( 1+
  \e \bigl[ \sup_{s \in [0,T]} {|{u}_{n}(s)| }_{\hmath }^{2} \bigr] \bigr)
\le C \cdot (1+ {C}_{1}(2)) \cdot \theta =: {c}_{6} \cdot \theta . 
\end{align*}
By condition \eqref{E:F_linear_growth} with $p=1$ in Assumption (F.2) and estimate \eqref{E:H_estimate} we have
\begin{align*}
& \e \bigl[ {| \Jn{7} (\taun + \theta ) - \Jn{7}(\taun )| }_{{\umath }^{\prime  }}  \bigr]
 = \e \Bigl[ {\Bigl| \int_{\taun }^{\taun + \theta } \int_{Y \setminus {Y}_{0}} \Pn F(s,\un  (s)) 
\, d \mu (y) ds \Bigr| }_{{\umath }^{\prime }} \Bigr]
 \nonumber \\
&  \le  \, \e \Bigl[  \int_{\taun }^{\taun + \theta } \int_{Y}
{| F(s, \un (s)) |}_{\hmath }  \, ds \Bigr]
 \le {C}_{1} \, \e \Bigl[  \int_{\taun }^{\taun + \theta } 
 ( 1+ {|\un (s)|}_{\hmath } ) \, ds \Bigr] \nonumber \\
&  \le {C}_{1}\theta \, \e \bigl[ 1+ \sup_{s \in [0,T]} {|\un (s)|}_{\hmath } \bigr] \le {C}_{1}(1+{C}_{1}(1)) \, \theta 
 =: {c}_{7}\theta .  
\end{align*}
Let us consider the term ${\Jn{8}}$. 
By the It\^{o} isometry,
condition \eqref{E:G*} in assumption (G.3), continuity of the embedding ${\vmath }^{\prime }\hookrightarrow {\umath }^{\prime }$ and inequality \eqref{E:H_estimate}, we have
\begin{align*}
& \e \bigl[ {| \Jn{8} (\taun +\theta ) - \Jn{8}(\taun )|}_{{\umath }^{\prime }}^{2}  \bigr] 
  = \e \Bigl[ \Bigl| \int_{\taun }^{\taun +\theta } 
    \Pn G(s,\un (s))\, dW(s) {\Bigr| }_{{\umath }^{\prime }}^{2} \Bigr]  \nonumber \\
&  \le c  \e \Bigl[ \int_{\taun }^{\taun +\theta }(1+{|\un (s)|}_{\hmath }^{2} ) ds  \Bigr] 
 \le c   \theta \bigl( 1+ \e \bigl[ \sup_{s \in [0,T]} {|\un (s)|}_{\hmath }^{2}\bigr] \bigr) 
\le c (1+ {C}_{1}(2) ) \theta =:{c}_{8}\cdot \theta  .  
\end{align*}
By Lemma \ref{L:Aldous_criterion} the sequence ${({u}_{n})}_{n\in \nat }$ satisfies the Aldous condition in the space ${\umath }^{\prime }$.
This completes the proof of Lemma \ref{L:comp_Galerkin}.  \qed


\end{document}